\documentclass{amsart}
\usepackage{amsfonts}
\usepackage{bbm}
\usepackage{mathrsfs}
\usepackage{color}
\usepackage{amsmath}
\usepackage{bigdelim}
\usepackage{multirow}
\usepackage{amssymb}
\usepackage{indentfirst}
\usepackage{CJK}
\usepackage{fancyhdr}
\usepackage{amscd,amssymb,amsmath,graphicx,verbatim,xy, cases}
\usepackage[TS1,OT1,T1]{fontenc}

\newtheorem{theorem}{Theorem}[section]
\newtheorem{lemma}[theorem]{Lemma}
\newtheorem{corollary}[theorem]{Corollary}
\newtheorem{proposition}[theorem]{Proposition}

\theoremstyle{definition}
\newtheorem{definition}[theorem]{Definition}
\newtheorem{example}[theorem]{Example}

\theoremstyle{remark}
\newtheorem{remark}[theorem]{Remark}
\numberwithin{equation}{section}

\begin{document}
	
	\title[Unitary similarity for dual complex matrices and dual quaternion matrices]{Unitary similarity for dual complex matrices and dual quaternion matrices}
	
	\author{Cailing Yao}
	\address{Cailing Yao, School of Medical Informatics, Changchun University of Chinese Medicine, 130117, Changchun, P. R. China}
	\email{yaocl@ccucm.edu.cn}
	\author{Bingzhe Hou}
	\address{Bingzhe Hou, School of Mathematics, Jilin University, 130012, Changchun, P. R. China}
	\email{houbz@jlu.edu.cn}
	\author{Xiaoqi Feng}
	\address{Xiaoqi Feng, School of Mathematics, Jilin University, 130012, Changchun, P. R. China}
	\email{fengxiaoqi2011@qq.com}

	\subjclass{15A15, 15A24, 15B33, 15B57.}
	
	\keywords{Dual quaternion matrices, unitary similarity, Sylvester equation, trace words, complex adjoint.}
	
	\begin{abstract}
		We study unitary similarity of dual complex matrices and dual quaternion matrices. First, we
		characterize unitary similarity in terms of the existence of a skew-Hermitian solution to a Sylvester-type matrix equation. For $2\times2$ dual complex matrices, we obtain a complete three-trace criterion when the standard part is	nonnormal and analyze the limitations of trace-word criteria when the standard part is normal. We then prove, for every $n\geq1$, that two $n\times n$ dual quaternion matrices are unitarily similar if and only if their dual complex adjoint	matrices are unitarily similar, and we extend this result to finite families. As a scalar consequence, we derive a finite Trace-word criterion when the standard part is nonreal and exhibit its failure when the standard part is real.
	\end{abstract}
	\maketitle


\section{Introduction}
The skew field $\mathbb{Q}$ of quaternions was discovered by W. Hamilton \cite{H1843} in 1843. Quaternions have extensive applications in many fields, such as multibody dynamics and control \cite{WW}, computer graphics \cite{KS}, aerospace \cite{WGM}, virtual reality \cite{KC}, and so on. Dual quaternions were introduced by Clifford \cite{C} in $1873$. To date, dual quaternions have been widely used in many fields, such as robotics \cite{AC}, 3D motion modelling and control \cite{LL}. The theory of complex and quaternion matrices have already been well developed, including spectral theory, decomposition theory, numerical analysis, generalized inverses and equations. However, the aforementioned theories related to dual quaternion matrices have yet to be fully developed.

Unitary transformations between quaternion matrices constitute an important class of transformations, which inherit many favorable properties from their complex counterparts, including the preservation of inner products and norms. Considerable research has been devoted to unitary transformations of quaternion matrices (see, e.g., \cite{RL14}, \cite{WN55}). Nevertheless, a systematic study of unitary transformations between dual quaternion matrices is still lacking. This paper aims to provide some characterizations of unitary similarity for $n\times n$  dual complex matrices and dual quaternion matrices.

The present article is organized as follows. In Section 2, we introduce the
basic concepts and properties used throughout the paper, including
quaternions and quaternion matrices, dual quaternions and dual quaternion
matrices, complex adjoint matrices, Sylvester equations, the trace of complex
matrices (denoted by $\operatorname{tr}$), and the Trace of quaternion
matrices (denoted by $\operatorname{Tr}$). In Section 3, we characterize
unitary similarity of $n\times n$ dual complex and dual quaternion matrices
by the existence of a skew-Hermitian solution to a specialized Sylvester
matrix equation. In Section 4, we provide a
complete characterization of unitary similarity for $2\times2$
dual complex matrices with nonnormal standard parts, and discuss the case where the standard parts are normal from the perspective of matrix trace words. In Section 5, we
prove, for arbitrary $n$, that two dual quaternion matrices are unitarily
similar if and only if their dual complex adjoint matrices are unitarily
similar. We then derive the scalar Trace-word criterion and explain the
exceptional behavior when the standard part is real.

\section{Preliminaries}
In this section, we introduce some basic definitions and related properties.

\textbf{Dual numbers.}\ \ Denote by $\mathbb{R}$, $\mathbb{C}$ and $\mathbb{Q}$ the sets of all real numbers, complex numbers and quaternions, respectively. Assume $q=q_{st}+q_{\mathcal{I}}\epsilon$, where $q_{st},q_{\mathcal{I}}\in\mathbb{R}$, $\epsilon$ is the infinitesimal unit, satisfying $\epsilon^{2}=0$. We call $q$ a dual number, $q_{st}$ the real part or standard part of $q$ and $q_{\mathcal{I}}$ the dual part or infinitesimal part of $q$. Denote by $\mathbb{D}$ the set of dual numbers. Additionally, if $q_{st},q_{\mathcal{I}}\in\mathbb{C}$ or $q_{st},q_{\mathcal{I}}\in\mathbb{Q}$, respectively, then we say $q$ is a dual complex number or dual quaternion. We denote by $\mathbb{DC}$ the set of dual complex numbers and $\mathbb{DQ}$ the set of dual quaternions, respectively. The infinitesimal unit $\epsilon$ is commutative in multiplication with numbers in $\mathbb{R}$, $\mathbb{C}$ and $\mathbb{Q}$. If $q_{st}\neq 0$, we say $q$ is appreciable, or else, we say $q$ is infinitesimal.

For any two dual numbers $p,q\in \mathbb{D}$, where
$$
p=p_{st}+p_{\mathcal{I}}\epsilon, \ q=q_{st}+q_{\mathcal{I}}\epsilon.
$$
 If either of the following two cases occurs,
\begin{enumerate}
\item [(i).] $p_{st}<q_{st}$,

\item [(ii).] $p_{st}=q_{st}$ and $p_{\mathcal{I}}\leq q_{\mathcal{I}}$,
\end{enumerate}
then we say $p\leq q$. Furthermore, we say $p$ is positive, nonnegative, nonpositive or negative, if $p>0$, $p\geq 0$, $p\leq 0$ or $p<0$, respectively. More about the total order for dual numbers can be seen in \cite{QLY}.

\textbf{Quaternions and quaternion matrices.}\ \ We define $a\in\mathbb{Q}$ to be $a=a_0+a_1\boldsymbol{i}+a_2\boldsymbol{j}+a_3\boldsymbol{k},$ where $a_0, a_1, a_2, a_3\in\mathbb{R}$ and $\boldsymbol{i}^2=\boldsymbol{j}^2=\boldsymbol{k}^2=\boldsymbol{ijk}=-1$. Denote the real and imaginary parts of $a$  by $\mathrm{Re}(a)=a_0$ and  $\mathrm{Im}(a)=a_1\boldsymbol{i}+a_2\boldsymbol{j}+a_3\boldsymbol{k},$ respectively, the modulus of $a$ by
$|a|=\sqrt{a_0^2+a_1^2+a_2^2+a_3^2}$, and the conjugation of $a$ by $\overline{a}=a_0-a_1\boldsymbol{i}-a_2\boldsymbol{j}-a_3\boldsymbol{k}$. We say $a\in\mathbb{Q}$ is invertible if there exists $b\in\mathbb{Q}$ such that $ab=ba=1$ and call $b$ the inverse of $a$.

Denote by $\mathbb{C}^{n\times n}$ and $\mathbb{Q}^{n\times n}$ the sets of all $n\times n$ complex matrices and $n\times n$ quaternion matrices, respectively. If $Q=(q_{ij}) \in \mathbb{Q}^{n\times n}$, we use $\overline{Q}=(\overline{q_{ij}}),Q^{T}=(q_{ji}),Q^{*}=\overline{Q^{T}}=(\overline{Q})^{T}$ to denote the conjugation, transpose and conjugate transpose of $Q$ respectively. If $Q=\overline{Q}$, then $Q$ is a real matrix. If $Q=Q^{T}$, we say $Q$ is symmetric and if $Q=Q^{*}(Q=-Q^{*})$, then we say $Q$ is Hermitian (skew-Hermitian).

For two matrices $A,B\in \mathbb{Q}^{n\times n}$, if there exists an invertible matrix $P\in \mathbb{Q}^{n\times n}$ such that $PAP^{-1}=B$, then we say $A$ is similar to $B$ and denote by $A\sim B$. Especially, if $P$ is unitary, we say $A$ is unitarily similar to $B$ and denote by $A\sim_{u} B$.

\textbf{Dual quaternions.}\ \ For a dual quaternion $q\in\mathbb{DQ}$, it has the form $q=q_{st}+q_{\mathcal{I}}\epsilon$, where $q_{st},q_{\mathcal{I}}\in\mathbb{Q}$ are the standard part and the infinitesimal part of $q$, respectively. The conjugation of $q$ is defined to be $q^{\ast}=q_{st}^{\ast}+q_{\mathcal{I}}^{\ast}\epsilon$. Hence, if $q=q^{\ast}$, then $q\in\mathbb{D}$.

Now we let $p,q\in\mathbb{DQ}$, where $p=p_{st}+p_{\mathcal{I}}\epsilon$ and $q=q_{st}+q_{\mathcal{I}}\epsilon$. Define their addition and multiplication as
$$
p+q=(p_{st}+q_{st})+(p_{\mathcal{I}}+q_{\mathcal{I}})\epsilon,~ pq=p_{st}q_{st}+(p_{st}q_{\mathcal{I}}+p_{\mathcal{I}}q_{st})\epsilon.
$$
A direct calculation shows that a dual number is commutative with any dual quaternion. For $p\in\mathbb{DQ}$, if there exists $q\in\mathbb{DQ}$ such that $pq=qp=1$, then we say $p$ is invertible and $q$ is the inverse of $p$. It follows that $p$ is invertible if and only if $p$ is appreciable and
$$
p^{-1}=p_{st}^{-1}-p_{st}^{-1}p_{\mathcal{I}}p_{st}^{-1}\epsilon.
$$
For $p=p_{st}+p_{\mathcal{I}}\epsilon$, the modulus of $p$ is defined as
\begin{enumerate}
\item if $p_{st}=0$, $|p|=|p_{\mathcal{I}}|\epsilon$, where $|p_{\mathcal{I}}|$ denotes the modulus of the quaternion $p_{\mathcal{I}}$. That is, if we set $p_{\mathcal{I}}=y_{0}+y_{1}\boldsymbol{i}+y_{2}\boldsymbol{j}+y_{3}\boldsymbol{k}$, then
    $$
    |p_{\mathcal{I}}|=\sqrt{y_{0}^{2}+y_{1}^{2}+y_{2}^{2}+y_{3}^{2}};
    $$
\item if $p_{st}\neq 0$, $|p|=|p_{st}|+\frac{p_{st}\cdot p_{\mathcal{I}}}{|p_{st}|}\epsilon$, where $p_{st}\cdot p_{\mathcal{I}}$ is the dot product of $p_{st}$ and $p_{\mathcal{I}}$. Specifically, if we let $p_{st}=x_{0}+x_{1}\boldsymbol{i}+x_{2}\boldsymbol{j}+x_{3}\boldsymbol{k}$, $p_{\mathcal{I}}=y_{0}+y_{1}\boldsymbol{i}+y_{2}\boldsymbol{j}+y_{3}\boldsymbol{k}$, then
$$
p_{st}\cdot p_{\mathcal{I}}=x_{0}y_{0}+x_{1}y_{1}+x_{2}y_{2}+x_{3}y_{3}.
$$
It is obvious that $p_{st}\cdot p_{\mathcal{I}}=\frac{p_{st}p_{\mathcal{I}}^{\ast}+p_{\mathcal{I}}p_{st}^{\ast}}{2}$ and by the definition of modulus for $p\in \mathbb{DQ}$, we can see that $|p|$ is a nonnegative dual number. If $|p|=1$, then $p$ is called a unit dual quaternion. A unit dual quaternion $p$ must be invertible and $p^{-1}=p^{\ast}$ always holds. Further, $|p|=1$ if and only if $|p_{st}|=1$ and $p_{st}\cdot p_{\mathcal{I}}=0$.
\end{enumerate}

\textbf{Complex adjoint.}\ \ The complex adjoint matrix of a quaternion matrix is a classical concept, see, e.g., \cite{Z}. For $A=P_{1}+P_{2}\boldsymbol{i}+P_{3}\boldsymbol{j}+P_{4}\boldsymbol{k}\in\mathbb{Q}^{n\times n}$, where $P_{1}, P_{2}, P_{3}, P_{4}\in\mathbb{R}^{n\times n}$. Let $A_{1}=P_{1}+P_{2}\boldsymbol{i}$ and $A_{2}=P_{3}+P_{4}\boldsymbol{i}$, then $A_{1}, A_{2}\in\mathbb{C}^{n\times n}$ and $A=A_{1}+A_{2}\boldsymbol{j}$. We shall call the $2n\times 2n$ complex matrix
$$
A_{\mathbb{C}}=\begin{pmatrix}
A_{1}&A_{2}\\
-\overline{A_{2}}&\overline{A_{1}}
\end{pmatrix}
$$
the complex adjoint matrix of $A$. It can be seen that $A_{\mathbb{C}}$ is uniquely determined by $A$. The complex adjoint matrix possesses excellent properties \cite{Lee}.
\begin{enumerate}
\item [(1)]$(I_{n})_{\mathbb{C}}=I_{2n}$.
\item [(2)]$(AB)_{\mathbb{C}}=A_{\mathbb{C}}B_{\mathbb{C}}$.
\item [(3)]$(A+B)_{\mathbb{C}}=A_{\mathbb{C}}+B_{\mathbb{C}}$.
\item [(4)]$(A^{\ast})_{\mathbb{C}}=(A_{\mathbb{C}})^{\ast}$.
\item [(5)]$(A^{-1})_{\mathbb{C}}=(A_{\mathbb{C}})^{-1}$, if $A$ is invertible.
\item [(6)]$A_{\mathbb{C}}$ is unitary, Hermitian or normal if and only if $A$ is unitary, Hermitian or normal, respectively.
\end{enumerate}
Motivated by this idea, Y. Chen and L. Zhang introduced the concept of dual complex adjoint matrices for dual quaternion matrices in \cite{CZ}. Suppose that $A=A_{1}+A_{2}\boldsymbol{j}+(A_{3}+A_{4}\boldsymbol{j})\epsilon\in\mathbb{DQ}^{n\times n}$, where $A_{1}, A_{2}, A_{3}, A_{4}\in\mathbb{C}^{n\times n}$. Let
$$
A_{\mathbb{C}}=\begin{pmatrix}
A_{1}&A_{2}\\
-\overline{A_{2}}&\overline{A_{1}}
\end{pmatrix}+\begin{pmatrix}
A_{3}&A_{4}\\
-\overline{A_{4}}&\overline{A_{3}}
\end{pmatrix}\epsilon.
$$
The $2n\times 2n$ dual complex matrix $A_{\mathbb{C}}$ is called the dual complex adjoint matrix of $A$. It can be verified that the dual complex adjoint matrix defined in this way shares properties $(1)-(6)$ as well.

\textbf{Sylvester equation.}\ \ The Sylvester equation stems from Sylvester's early systematic study of linear matrix equations. His work, collected in volumes such as ``The Collected Mathematical Papers of James Joseph Sylvester'', has laid an important foundation for modern linear algebra and control theory. F. De Ter\'{a}n and F. Dopico \cite{Te} gave the  consistency and efficient solution of the $\ast$-Sylvester equation $AX+X^{\ast}B=C$ for $A\in\mathbb{F}^{m\times n},B\in\mathbb{F}^{n\times m},C\in\mathbb{F}^{m\times m}$, where $\mathbb{F}$ is a field with characteristic not $2$. Later, F. De Ter\'{a}n \cite{De}, A. Dmytryshyn and B. K{\aa}gstr\"{o}m \cite{DK} showed the consistency of a system of $\ast$-Sylvester equations $A_{l}X+X^{\ast}B_{l}=C_{l}$ for $l=1,\cdots,k$ in the same field, respectively. Nevertheless, an analogous characterization of the quaternion $\ast$-Sylvester equation has not yet been established.

\textbf{Trace.}\ \ For $A=[a_{ij}]\in\mathbb{C}^{n\times n}$, $B=[b_{ij}]\in\mathbb{Q}^{n\times n}$, the trace of $A$ and the Trace of $B$ are defined to be
$$
\mathrm{tr}(A)=\sum_{i=1}^{n}a_{ii},\ \mathrm{Tr}(B)=\sum_{i=1}^{n}(b_{ii}+\overline{b_{ii}})=2\mathrm{Re}(\sum_{i=1}^{n}b_{ii}).
$$
The trace of complex matrices has some very basic properties, such as linearity, invariance under transpose and conjugate transpose, and invariance under cyclic permutations. The Trace of quaternion matrices also possesses the above properties except for that the linearity with respect to multiplication, the scalar must be taken as a real number. More details may refer to \cite{HR}, \cite{DS08}.

\section{Sylvester equation characterizations of unitary similarity in $\mathbb{DC}^{n\times n}$ and $\mathbb{DQ}^{n\times n}$}
F. De Ter\'{a}n and F. Dopico \cite{Te} provided a necessary and sufficient condition for the existence of solutions of $AX+X^{\ast}B=C$ for rectangular matrices with entries in any field $\mathbb{F}$ with characteristic not $2$. Later, F. De Ter\'{a}n \cite{De}, A. Dmytryshyn and B. K{\aa}gstr\"{o}m \cite{DK} provided a necessary and sufficient condition for the existence of solutions of a system of $\ast$-Sylvester equations $A_{i}X+X^{\ast}B_{i}=C_{i}$ for rectangular matrices with entries in any field $\mathbb{F}$ with characteristic different from two. Here we only present the results for square matrices over the complex field $\mathbb{C}$.
\begin{lemma}[\cite{Te}]\label{2}
Let $A,B,C\in \mathbb{C}^{n\times n}$. Then the matrix equation
$$
AX+X^{\ast}B=C
$$
has a solution if and only if
$\begin{pmatrix}
C&A\\
B&0
\end{pmatrix}$ and $\begin{pmatrix}
0&A\\
B&0
\end{pmatrix}$ are $\ast$-congruent. That is, there exists an invertible matrix $P\in\mathbb{C}^{2n\times 2n}$ such that
$$
P^{\ast}\begin{pmatrix}
C&A\\
B&0
\end{pmatrix}P=\begin{pmatrix}
0&A\\
B&0
\end{pmatrix}.
$$
\end{lemma}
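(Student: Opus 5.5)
The easy direction is sufficiency of a solution for congruence, which is a single explicit block computation. Suppose $X$ solves $AX+X^{\ast}B=C$. Take
$$
P=\begin{pmatrix} I_n&0\\ -X&I_n\end{pmatrix},
$$
which is invertible. Multiplying out gives
$$
P^{\ast}\begin{pmatrix} C&A\\ B&0\end{pmatrix}P=\begin{pmatrix} C-AX-X^{\ast}B&A\\ B&0\end{pmatrix}=\begin{pmatrix} 0&A\\ B&0\end{pmatrix}.
$$
Hence the two block matrices are $\ast$-congruent. This is the Roth-type "shear" congruence. Only the orientation of the shear, and hence the sign of $X$, needs checking.

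The converse is the substantive part: from an arbitrary $\ast$-congruence $P$ we must produce a solution $X$. I would follow the strategy of Roth's theorem as proved by Flanders--Wimmer, adapted to $\ast$-congruence.

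\textbf{Step 1: set up the two solution spaces.} Consider the real-linear maps
$$
\Phi_C(P)=P^{\ast}M_CP-M_0
$$
and the corresponding linearized objects. Here $M_C$ and $M_0$ denote the two block matrices. Working directly with congruence is nonlinear. So I would instead use the linear relation $M_C Q = Q^{-\ast}\dots$, which is awkward. A cleaner route is the $\ast$-congruence canonical form, i.e. the Horn--Sergeichuk canonical form for $\ast$-congruence over $\mathbb{C}$.

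\textbf{Step 2: pass to canonical form.} Reduce $A$ and $B$ through the pencil/\,$\ast$-palindromic structure: the $\ast$-congruence class of $\begin{pmatrix}C&A\\B&0\end{pmatrix}$ is governed by the $\ast$-cosquare
$$
M^{-\ast}M.
$$
This cosquare is defined when $M$ is invertible; in general one uses regularizing decompositions. Compare the canonical summands of $M_C$ and $M_0$. Show that congruence forces the "off-diagonal" part $C$ to be removable by a shear, via a dimension count of the solution spaces of the homogeneous equations $AY+Y^{\ast}B=0$ against the centralizer-type spaces. This mirrors Flanders--Wimmer's comparison of the kernel dimensions of the maps $(U,V)\mapsto (AU-UA,\dots)$.

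\textbf{Main obstacle.} The main difficulty is the necessity direction. The $\ast$-operation makes the equation only real-linear, and the congruence relation is not linear in $P$. The Flanders--Wimmer dimension argument must therefore be redone for real vector spaces, tracking the $\ast$-congruence orbit structure.

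Since this is exactly the content of De Ter\'{a}n--Dopico's theorem, in the paper I would cite \cite{Te} for this direction. I would verify only the constructive sufficiency computation above, which is all that is needed in the subsequent applications.
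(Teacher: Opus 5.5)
Your proposal matches the paper's treatment: the paper gives no proof of this lemma and simply cites De Ter\'{a}n--Dopico \cite{Te}, and your shear $P=\begin{pmatrix} I_n&0\\ -X&I_n\end{pmatrix}$ (your computation, including the sign, is correct) is exactly the one the paper uses for the easy direction of its quaternion analogue, Theorem~\ref{201}. Two caveats: your Steps 1--2 are not an argument and should be dropped rather than presented (the $\ast$-cosquare is undefined when the block matrices are singular, and even for nonsingular matrices similar $\ast$-cosquares do not imply $\ast$-congruence, e.g.\ $1$ and $-1$); and your closing claim that only the constructive direction is needed later is false, since the paper's proof of Theorem~\ref{201}, $(2)\Rightarrow(1)$, invokes precisely the converse (congruence implies solvability) for $A_{\mathbb{C}},B_{\mathbb{C}},C_{\mathbb{C}}$, so the citation of \cite{Te} carries real weight in the paper.
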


\begin{lemma}[\cite{De}]\label{2014}
Let $A_{i},B_{i},C_{i}\in \mathbb{C}^{n\times n}$ for $i=1,\cdots,l$. Then the following statements are equivalent.
\begin{enumerate}
\item The system of $\ast$-Sylvester equations
\begin{displaymath}
\left \{ \begin{array}{ll}
A_{1}X+X^{\ast}B_{1}=C_{1}\\
\ \ \ \ \vdots\ \ \ \ \ \ \ \vdots\\
A_{l}X+X^{\ast}B_{l}=C_{l}
\end{array} \right.
\end{displaymath}
has a solution $X\in \mathbb{C}^{n\times n}$.
\item There exists an invertible matrix $S\in \mathbb{C}^{2n\times 2n}$ such that
$$
S^{\ast}\begin{pmatrix}
C_{i}&A_{i}\\
B_{i}&0
\end{pmatrix}S=\begin{pmatrix}
0&A_{i}\\
B_{i}&0
\end{pmatrix}, \ \mathrm{for\ all}\  i=1,\cdots,l.
$$
\end{enumerate}
\end{lemma}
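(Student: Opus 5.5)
The direction (1)$\Rightarrow$(2) will be a direct computation with a single block-triangular congruence. Suppose $X$ solves $A_iX+X^{\ast}B_i=C_i$ for all $i$. Put
$$
S=\begin{pmatrix} I_n&0\\ -X&I_n\end{pmatrix}, \qquad S^{\ast}=\begin{pmatrix} I_n&-X^{\ast}\\ 0&I_n\end{pmatrix}.
$$
Multiplying out gives, for every $i$,
$$
S^{\ast}\begin{pmatrix} C_i&A_i\\ B_i&0\end{pmatrix}S=\begin{pmatrix} C_i-A_iX-X^{\ast}B_i&A_i\\ B_i&0\end{pmatrix}=\begin{pmatrix} 0&A_i\\ B_i&0\end{pmatrix}.
$$
The same invertible $S$ works for all $i$ at once, which is (2). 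Lemma \ref{2} is the case $l=1$ and follows the same way.

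For (2)$\Rightarrow$(1) I would linearize, since $\ast$-congruence is only real-linear in $S$. Write $M_i=\begin{pmatrix} C_i&A_i\\ B_i&0\end{pmatrix}$ and $N_i=\begin{pmatrix} 0&A_i\\ B_i&0\end{pmatrix}$. The relation $S^{\ast}M_iS=N_i$ implies, for $T=S^{-\ast}$,
$$
M_iS=TN_i \quad\text{and}\quad M_i^{\ast}S=TN_i^{\ast} \qquad (i=1,\dots ,l).
$$
So the tuple $(M_1,M_1^{\ast},\dots ,M_l,M_l^{\ast})$ is strictly equivalent to $(N_1,N_1^{\ast},\dots ,N_l,N_l^{\ast})$. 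This is a purely $\mathbb{C}$-linear matrix problem: a representation of a quiver with two vertices and $2l$ arrows.

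For such linear problems there is a Roth-type theorem, proved by the Flanders–Wimmer dimension count. Compare the intertwiner spaces
$$
\{(P,Q): M_iP=QN_i,\ M_i^{\ast}P=QN_i^{\ast}\ \forall i\}
$$
and the analogous space for $(N,N)$. Equivalence of the two tuples makes these spaces equal in dimension. Projecting to the lower-right blocks then shows that some intertwiner has identity diagonal blocks. Reading off its off-diagonal blocks gives a solution $(X,Y)$ of the coupled linear system
$$
A_iX+YB_i=C_i,\qquad B_i^{\ast}X+YA_i^{\ast}=C_i^{\ast}\qquad (i=1,\dots ,l).
$$

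The last step is to pass from $(X,Y)$ to a single unknown satisfying $Y=X^{\ast}$. Taking conjugate transposes, the map $\tau(X,Y)=(Y^{\ast},X^{\ast})$ interchanges the two families of equations. Hence $\tau$ maps the affine solution set of the coupled system to itself. Its average
$$
X_0=\tfrac12\bigl(X+Y^{\ast}\bigr),\qquad Y_0=X_0^{\ast}
$$
is again a solution, and $Y_0=X_0^{\ast}$ means $A_iX_0+X_0^{\ast}B_i=C_i$ for all $i$. This averaging uses that $\tfrac12$ exists, which is why characteristic $\neq 2$ appears in the cited results.

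I expect the main obstacle to be the Flanders–Wimmer step. The intertwiner spaces must be set up so that the block-triangular shape of $M_i$ and $N_i$ is respected for both families simultaneously. One must also verify that the dimension equality really forces an intertwiner with identity diagonal blocks. If this bookkeeping becomes unwieldy, I would fall back on the canonical form for $\ast$-congruence together with a cancellation argument, which is the route taken in \cite{Te} and \cite{De}.
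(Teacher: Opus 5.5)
The paper gives no proof of this lemma. It is quoted from De Ter\'{a}n's work and used only as a black box, to obtain the quaternion system version via Theorem~\ref{201}. There is therefore no internal argument to compare against. Judged on its own, your proposal is a correct proof: the direction (1)$\Rightarrow$(2), the passage from $\ast$-congruence to strict equivalence of the $2l$-tuples $(M_i,M_i^{\ast})$ and $(N_i,N_i^{\ast})$ via $(S,S^{-\ast})$, the coupled system, and the averaging by $\tau$ are all right.

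The one step you left vague does go through, but ``projecting to the lower-right blocks'' is not the right projection in your coordinates.

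\textbf{Identifying the extension.} The subrepresentation is the second block of the source mapped into the first block of the target, by $A_i$ and $B_i^{\ast}$. The quotient is $u\mapsto B_iu$, $u\mapsto A_i^{\ast}u$. Equivalently, swapping the two block columns turns $M_i$ and $M_i^{\ast}$ into $\begin{pmatrix}A_i&C_i\\0&B_i\end{pmatrix}$ and $\begin{pmatrix}B_i^{\ast}&C_i^{\ast}\\0&A_i^{\ast}\end{pmatrix}$.

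\textbf{The correct projection.} Write $P=(P_{jk})$, $Q=(Q_{jk})$ and set $\mathcal{H}_M=\{(P,Q):M_iP=QN_i,\ M_i^{\ast}P=QN_i^{\ast}\ \forall i\}$, with $\mathcal{H}_N$ defined analogously. You should project onto
\[
\rho(P,Q)=(P_{11},P_{12},Q_{21},Q_{22}),
\]
that is, the first block row of $P$ and the second block row of $Q$.

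\textbf{Why the dimension count works.}
\begin{enumerate}
\item[(a)] $C_i$ and $C_i^{\ast}$ only ever multiply $P_{11}$ and $P_{12}$. Hence $\ker\rho$ is the same subspace in $\mathcal{H}_M$ and in $\mathcal{H}_N$.
\item[(b)] The equations involving only $\rho$-blocks are the second block rows, $B_iP_{11}=Q_{22}B_i$, $B_iP_{12}=Q_{21}A_i$, $A_i^{\ast}P_{11}=Q_{22}A_i^{\ast}$, $A_i^{\ast}P_{12}=Q_{21}B_i^{\ast}$. These are identical for $M$ and $N$.
\item[(c)] For $N$ these equations decouple from the remaining ones, so $\rho(\mathcal{H}_N)$ is exactly their solution set. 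It follows that $\rho(\mathcal{H}_M)\subseteq\rho(\mathcal{H}_N)$.
\end{enumerate}
Since $(P,Q)\mapsto(SP,S^{-\ast}Q)$ gives $\dim\mathcal{H}_M=\dim\mathcal{H}_N$, the two images coincide. In particular $\rho(\mathcal{H}_M)$ contains $(I,0,0,I)$.

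\textbf{Reading off the solution.} For such an intertwiner, the $(1,1)$ blocks give
\[
A_i(-P_{21})+Q_{12}B_i=C_i,\qquad B_i^{\ast}(-P_{21})+Q_{12}A_i^{\ast}=C_i^{\ast}.
\]
This is your coupled system with $X=-P_{21}$ and $Y=Q_{12}$; only $P_{11}=I$ is actually used. No fallback to canonical forms for $\ast$-congruence is needed.

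\textbf{What your route adds.} Compared with citing the result, your proof is self-contained: it uses only a dimension count and an averaging step. It also proves something slightly stronger. Conditions (1) and (2) are equivalent to the purely $\mathbb{C}$-linear condition that $(M_1,M_1^{\ast},\dots,M_l,M_l^{\ast})$ and $(N_1,N_1^{\ast},\dots,N_l,N_l^{\ast})$ are strictly equivalent.
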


In fact, requiring  a skew-Hermitian solution of the equation in Lemma \ref{2} is equivalent to requiring a skew-Hermitian solution of the equation $AX-XB=C$.
We characterize the unitary similarity of two $n\times n$ dual complex matrices in terms of the existence of a skew-Hermitian solution to the equation of the form as in Lemma \ref{2}.

There is a necessary and sufficient characterization of unitary matrices $U\in\mathbb{DQ}^{n\times n}$ in terms of their standard parts and infinitesimal parts.

\begin{lemma}\label{fff}
For a matrix $U=U_{st}+U_{\mathcal{I}}\epsilon\in\mathbb{DQ}^{n\times n}$, $U$ is unitary in $\mathbb{DQ}^{n\times n}$ if and only if
$U_{st}$ is unitary and $U_{st}U_{\mathcal{I}}^{\ast}$ is skew-Hermitian.
\end{lemma}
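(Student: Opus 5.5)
We compute $UU^{\ast}$ and $U^{\ast}U$ directly, using the multiplication rule for dual quaternion matrices. By the entrywise definition of conjugation, $U^{\ast}=U_{st}^{\ast}+U_{\mathcal{I}}^{\ast}\epsilon$. Then, since $\epsilon^2=0$ and $\epsilon$ commutes with quaternion matrices,
$$
UU^{\ast}=U_{st}U_{st}^{\ast}+\bigl(U_{st}U_{\mathcal{I}}^{\ast}+U_{\mathcal{I}}U_{st}^{\ast}\bigr)\epsilon,\qquad
U^{\ast}U=U_{st}^{\ast}U_{st}+\bigl(U_{st}^{\ast}U_{\mathcal{I}}+U_{\mathcal{I}}^{\ast}U_{st}\bigr)\epsilon .
$$
Unitarity of $U$ means $UU^{\ast}=U^{\ast}U=I_n$. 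Comparing standard and infinitesimal parts, this is equivalent to four conditions. The first two are $U_{st}U_{st}^{\ast}=U_{st}^{\ast}U_{st}=I_n$, i.e. $U_{st}$ is unitary. The other two are
$$
U_{st}U_{\mathcal{I}}^{\ast}+U_{\mathcal{I}}U_{st}^{\ast}=0 \quad\text{and}\quad U_{st}^{\ast}U_{\mathcal{I}}+U_{\mathcal{I}}^{\ast}U_{st}=0 .
$$
Since $(U_{st}U_{\mathcal{I}}^{\ast})^{\ast}=U_{\mathcal{I}}U_{st}^{\ast}$, the first of these says exactly that $U_{st}U_{\mathcal{I}}^{\ast}$ is skew-Hermitian. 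This gives the forward implication immediately.

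For the converse, we assume $U_{st}$ is unitary and $U_{st}U_{\mathcal{I}}^{\ast}$ is skew-Hermitian, and we must also recover the condition coming from $U^{\ast}U$. The key step is to conjugate by $U_{st}$. Write $K=U_{st}U_{\mathcal{I}}^{\ast}$. Then $U_{\mathcal{I}}^{\ast}=U_{st}^{\ast}K$, and hence $U_{\mathcal{I}}=K^{\ast}U_{st}=-KU_{st}$. Substituting,
$$
U_{st}^{\ast}U_{\mathcal{I}}+U_{\mathcal{I}}^{\ast}U_{st}=-U_{st}^{\ast}KU_{st}+U_{st}^{\ast}KU_{st}=0 .
$$
So both dual-part conditions hold, and $UU^{\ast}=U^{\ast}U=I_n$.

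The only subtle point is that the one-sided identity $UU^{\ast}=I$ does not obviously give $U^{\ast}U=I$ over the noncommutative ring $\mathbb{DQ}$. The substitution above settles this directly, using only that $U_{st}$ is unitary (for instance via the complex adjoint, property (6)). Everything else uses only associativity and $(XY)^{\ast}=Y^{\ast}X^{\ast}$ for quaternion matrices, both of which are standard.
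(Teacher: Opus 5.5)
Your proof is correct. Comparing the standard and infinitesimal parts of both $UU^{\ast}$ and $U^{\ast}U$ works, and so does recovering the $U^{\ast}U$ condition via $U_{\mathcal{I}}=-KU_{st}$, i.e.\ $U=(I_n-K\epsilon)U_{st}$. The paper states this lemma without proof, but it uses the same coefficient comparison and factorization in the proof of Theorem~\ref{thm:arbitrary-order} (there with $U_{\mathcal{I}}U_{st}^{\ast}=-K$ in place of $K$), where it likewise checks both $\widehat T\widehat T^{\ast}=I_{2n}$ and $\widehat T^{\ast}\widehat T=I_{2n}$.
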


\begin{theorem}\label{14}
Suppose that $A=A_{st}+A_{\mathcal{I}}\epsilon$ and $B=B_{st}+B_{\mathcal{I}}\epsilon$ are two matrices in $\mathbb{DC}^{n\times n}$, where $A_{st},A_{\mathcal{I}},B_{st},B_{\mathcal{I}}\in\mathbb{C}^{n\times n}$. Then $A$ and $B$ are unitarily similar in $\mathbb{DC}^{n\times n}$ if and only if
\begin{enumerate}
\item $A_{st}$ and $B_{st}$ are unitarily similar in $\mathbb{C}^{n\times n}$. That is, there exists a unitary matrix $U_{st}$ in $\mathbb{C}^{n\times n}$ such that $U_{st}A_{st}U_{st}^{\ast}=B_{st}$.
\item For the same unitary matrix $U_{st}$, the matrix equation
\begin{equation}\label{15}
B_{st}X-XB_{st}=B_{\mathcal{I}}-U_{st}A_{\mathcal{I}}U_{st}^{\ast}
\end{equation}
has a skew-Hermitian solution $X\in\mathbb{C}^{n\times n}$.
\end{enumerate}
\end{theorem}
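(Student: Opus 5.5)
The plan is to write a candidate dual unitary as $U=U_{st}+U_{\mathcal{I}}\epsilon$ and expand the similarity relation $UAU^{\ast}=B$ in powers of $\epsilon$, using $\epsilon^2=0$. The unitarity of $U$ will be handled by the dual complex analogue of Lemma \ref{fff} (the proof there uses only $\epsilon^2=0$ and the definition $U^{\ast}=U_{st}^{\ast}+U_{\mathcal{I}}^{\ast}\epsilon$, so it applies verbatim in $\mathbb{DC}^{n\times n}$). Thus $U$ is unitary exactly when $U_{st}$ is unitary and $U_{st}U_{\mathcal{I}}^{\ast}$ is skew-Hermitian. Expanding,
$$UAU^{\ast}=U_{st}A_{st}U_{st}^{\ast}+\bigl(U_{\mathcal{I}}A_{st}U_{st}^{\ast}+U_{st}A_{\mathcal{I}}U_{st}^{\ast}+U_{st}A_{st}U_{\mathcal{I}}^{\ast}\bigr)\epsilon .$$
So $UAU^{\ast}=B$ is equivalent to two conditions: $U_{st}A_{st}U_{st}^{\ast}=B_{st}$, and
$$U_{\mathcal{I}}A_{st}U_{st}^{\ast}+U_{st}A_{st}U_{\mathcal{I}}^{\ast}=B_{\mathcal{I}}-U_{st}A_{\mathcal{I}}U_{st}^{\ast}.$$

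The key step is to rewrite the infinitesimal condition in terms of $X:=U_{st}U_{\mathcal{I}}^{\ast}$, which is skew-Hermitian. Then $U_{\mathcal{I}}^{\ast}=U_{st}^{\ast}X$ and $U_{\mathcal{I}}=X^{\ast}U_{st}=-XU_{st}$. Substituting, and using $A_{st}=U_{st}^{\ast}B_{st}U_{st}$, gives
$$U_{st}A_{st}U_{\mathcal{I}}^{\ast}=B_{st}X,\qquad U_{\mathcal{I}}A_{st}U_{st}^{\ast}=-XB_{st}.$$
The infinitesimal condition therefore becomes exactly equation (\ref{15}), namely $B_{st}X-XB_{st}=B_{\mathcal{I}}-U_{st}A_{\mathcal{I}}U_{st}^{\ast}$.

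With this in hand, both directions follow.

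For necessity, take a unitary $U$ with $UAU^{\ast}=B$. Its standard part $U_{st}$ gives condition (1), and $X=U_{st}U_{\mathcal{I}}^{\ast}$ is a skew-Hermitian solution of (\ref{15}).

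For sufficiency, take $U_{st}$ from (1) and a skew-Hermitian solution $X$ of (\ref{15}), and set $U_{\mathcal{I}}:=-XU_{st}$. Then $U_{st}U_{\mathcal{I}}^{\ast}=U_{st}U_{st}^{\ast}(-X^{\ast})=X$ is skew-Hermitian, so $U$ is unitary by Lemma \ref{fff}. The computation above, read backwards, shows $UAU^{\ast}=B$.

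The only delicate point is the phrase ``for the same unitary matrix $U_{st}$''. In the necessity direction the $U_{st}$ in (1) is the standard part of the given $U$, and it is for this choice that (\ref{15}) is solvable. The statement should be read that way, i.e.\ the existence of some $U_{st}$ satisfying both (1) and (2), rather than as a claim for every $U_{st}$ satisfying (1). I will state this reading explicitly. Apart from that, the argument is a direct verification, and the main care needed is keeping track of the signs when $X^{\ast}=-X$ is substituted.
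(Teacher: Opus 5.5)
Your proposal is correct and is essentially the paper's proof. Both arguments expand $UAU^{\ast}=B$ to first order in $\epsilon$ and put $X=U_{st}U_{\mathcal{I}}^{\ast}$, which is skew-Hermitian by Lemma \ref{fff}, so that the dual-part equation becomes (\ref{15}); for the converse both take $U_{\mathcal{I}}=X^{\ast}U_{st}=-XU_{st}$. Your existence reading of ``the same $U_{st}$'' is also the paper's intended one, as the example right after the theorem shows.
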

\begin{proof}
We prove the necessity first. Suppose that $A$ and $B$ are unitarily similar. Then there is a unitary matrix $U=U_{st}+U_{\mathcal{I}}\epsilon\in\mathbb{DC}^{n\times n}$ such that $UAU^{\ast}=B$. More precisely,
\begin{numcases}{}
U_{st}A_{st}U_{st}^{\ast}=B_{st}\label{f}\\
U_{st}A_{st}U_{\mathcal{I}}^{\ast}+U_{\mathcal{I}}A_{st}U_{st}^{\ast}=B_{\mathcal{I}}-U_{st}A_{\mathcal{I}}U_{st}^{\ast}\label{ff}
\end{numcases}{}
By Lemma \ref{fff}, the unitarity of $U$ means that $U_{st}$ is unitary and $U_{st}U_{\mathcal{I}}^{\ast}$ is skew-Hermitian. Hence (\ref{f}) implies fact $1$. Further, it follows from $U_{st}A_{st}U_{st}^{\ast}=B_{st}$ that
$$
U_{st}A_{st}=B_{st}U_{st},\ A_{st}U_{st}^{\ast}=U_{st}^{\ast}B_{st}.
$$
Substituting these identities into (\ref{ff}) yields
$$
B_{st}U_{st}U_{\mathcal{I}}^{\ast}+U_{\mathcal{I}}U_{st}^{\ast}B_{st}=B_{\mathcal{I}}-U_{st}A_{\mathcal{I}}U_{st}^{\ast}.
$$
Let $U_{st}U_{\mathcal{I}}^{\ast}=X$, we obtain fact $2$.

Next we prove the sufficiency. Suppose both fact $1$ and $2$ are satisfied. Let $U_{\mathcal{I}}=X^{\ast}U_{st}$. Then $X=U_{st}U_{\mathcal{I}}^{\ast}$. Let $U=U_{st}+U_{\mathcal{I}}\epsilon\in\mathbb{DC}^{n\times n}$, then it is easy to see that the matrix $U$ is unitary and satisfies $UAU^{\ast}=B$. That is, $A$ and $B$ are unitarily similar.
\end{proof}
\begin{corollary}
Suppose that $A=A_{st}+A_{\mathcal{I}}\epsilon$ and $B=B_{st}+B_{\mathcal{I}}\epsilon$ are two matrices in $\mathbb{DC}^{n\times n}$, where $A_{st},A_{\mathcal{I}},B_{st},B_{\mathcal{I}}\in\mathbb{C}^{n\times n}$. Then $A$ and $B$ are unitarily similar in $\mathbb{DC}^{n\times n}$ if and only if
\begin{enumerate}
\item $A_{st}$ and $B_{st}$ are unitarily similar in $\mathbb{C}^{n\times n}$. That is, there exists a unitary matrix $U_{st}$ in $\mathbb{C}^{n\times n}$ such that $U_{st}A_{st}U_{st}^{\ast}=B_{st}$.
\item For the same unitary matrix $U_{st}$, there exists an invertible matrix $P\in\mathbb{C}^{2n\times 2n}$ such that
$$
P\begin{pmatrix}
B_{\mathcal{I}}-U_{st}A_{\mathcal{I}}U_{st}^{\ast} &B_{st}\\
B_{st} & 0
\end{pmatrix}P^{\ast}=\begin{pmatrix}
0       &B_{st}\\
B_{st} & 0
\end{pmatrix},
$$
$$
P\begin{pmatrix}
0 &I_{n}\\
I_{n} & 0
\end{pmatrix}P^{\ast}=\begin{pmatrix}
0       &I_{n}\\
I_{n} & 0
\end{pmatrix}.
$$
\end{enumerate}
\end{corollary}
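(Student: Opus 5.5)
The plan is to reduce the corollary to Theorem \ref{14} together with the system version of the $\ast$-Sylvester criterion, Lemma \ref{2014}. Condition 1 is the same in both statements, so it suffices to show that, for the fixed $U_{st}$ and with $C:=B_{\mathcal{I}}-U_{st}A_{\mathcal{I}}U_{st}^{\ast}$, the existence of a skew-Hermitian solution of (\ref{15}) is equivalent to condition 2. The idea is to encode skew-Hermitianity of $X$ as a second $\ast$-Sylvester equation. We look at the system
\begin{equation*}
\left\{\begin{array}{l}
B_{st}Y+Y^{\ast}B_{st}=C,\\
I_{n}Y+Y^{\ast}I_{n}=0,
\end{array}\right.
\end{equation*}
in the unknown $Y\in\mathbb{C}^{n\times n}$.

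First step: the second equation says exactly that $Y^{\ast}=-Y$, i.e.\ $Y$ is skew-Hermitian. For such $Y$ the first equation becomes $B_{st}Y-YB_{st}=C$. Hence the system has a solution if and only if (\ref{15}) has a skew-Hermitian solution; this is the remark made before Lemma \ref{fff}.

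Second step: apply Lemma \ref{2014} with $l=2$, taking
\begin{equation*}
(A_{1},B_{1},C_{1})=(B_{st},B_{st},C),\qquad (A_{2},B_{2},C_{2})=(I_{n},I_{n},0).
\end{equation*}
The lemma says the system is solvable if and only if there is an invertible $S$ with
\begin{equation*}
S^{\ast}\begin{pmatrix} C & B_{st}\\ B_{st} & 0\end{pmatrix}S=\begin{pmatrix} 0 & B_{st}\\ B_{st} & 0\end{pmatrix}
\quad\text{and}\quad
S^{\ast}\begin{pmatrix} 0 & I_{n}\\ I_{n} & 0\end{pmatrix}S=\begin{pmatrix} 0 & I_{n}\\ I_{n} & 0\end{pmatrix}.
\end{equation*}
Setting $P=S^{\ast}$, which is invertible exactly when $S$ is, gives precisely the form $P(\cdot)P^{\ast}$ in condition 2. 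Combining the two steps with Theorem \ref{14} proves the corollary in both directions.

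There is no real obstacle. The only points needing care are checking that the second equation forces skew-Hermitianity in both directions (immediate) and keeping track of the conversion between $S^{\ast}(\cdot)S$ and $P(\cdot)P^{\ast}$.
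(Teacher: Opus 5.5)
Your proposal is correct and follows essentially the same route as the paper. The paper gives no separate proof of this corollary; it is meant to follow from Theorem \ref{14} together with the system criterion of Lemma \ref{2014}, applied to $B_{st}Y+Y^{\ast}B_{st}=B_{\mathcal{I}}-U_{st}A_{\mathcal{I}}U_{st}^{\ast}$ and $Y+Y^{\ast}=0$, with $P=S^{\ast}$, which is exactly your argument.
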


Note that if two dual complex matrices $A$ and $B$ in $\mathbb{DC}^{n\times n}$ are unitarily similar, then their standard parts $A_{st}$ and $B_{st}$ must be unitarily similar. There may be multiple unitary matrices $U_{st}$ that establish the unitary similarity between $A_{st}$ and $B_{st}$, but the correct choice of $U_{st}$ is essential for finding a compatible $U_{\mathcal{I}}$ that yields the unitary similarity between $A$ and $B$. An improper choice of $U_{st}$ may lead to the situation where, although $A$ and $B$ are unitarily similar, no $U_{\mathcal{I}}$ matching $U_{st}$ exists.

\begin{example}
Let
$T=
\begin{pmatrix}
1 & 0 \\
0 & 1
\end{pmatrix}
+
\begin{pmatrix}
2 & 0 \\
0 & 1
\end{pmatrix}\epsilon\in\mathbb{DC}^{2\times 2}$. Then $T$ must be unitarily similar to itself. Let
$U_{st}=\frac{\sqrt 2}{2}
\begin{pmatrix}
1 & 1 \\
-1 & 1
\end{pmatrix}\in\mathbb{C}^{2\times 2}$. Then $U_{st}$ is unitary and
$$
U_{st}\begin{pmatrix}
1 & 0 \\
0 & 1
\end{pmatrix}U_{st}^{\ast}=\begin{pmatrix}
1 & 0 \\
0 & 1
\end{pmatrix}.
$$

However, there exists no matrix $U_{\mathcal{I}}\in\mathbb{C}^{2\times 2}$ such that $U=U_{st}+U_{\mathcal{I}}\epsilon\in\mathbb{DC}^{2\times 2}$ is unitary and
$UT=TU$.

\end{example}

\begin{proof}
Suppose that
$U_{\mathcal{I}}=
\begin{pmatrix}
a & b \\
c & d \\
\end{pmatrix}$ satisfies that $UT=TU$, i.e.,
$$\begin{aligned}
& (\begin{pmatrix}
1 & 0 \\
0 & 1
\end{pmatrix}
+
\begin{pmatrix}
2 & 0 \\
0 & 1
\end{pmatrix}\epsilon)
(\frac{\sqrt 2}{2}
\begin{pmatrix}
1 & 1 \\
-1 & 1
\end{pmatrix}
+
\begin{pmatrix}
a & b \\
c & d \\
\end{pmatrix}\epsilon)\\
=&
(\frac{\sqrt 2}{2}
\begin{pmatrix}
1 & 1 \\
-1 & 1
\end{pmatrix}
+
\begin{pmatrix}
a & b \\
c & d \\
\end{pmatrix}\epsilon)
(\begin{pmatrix}
1 & 0 \\
0 & 1
\end{pmatrix}
+
\begin{pmatrix}
2 & 0 \\
0 & 1
\end{pmatrix}\epsilon)
\end{aligned}.
$$
Then we have
$$
\begin{pmatrix}
\sqrt2 +a & \sqrt2+b \\
-\frac{\sqrt 2}{2}+c &\frac{\sqrt 2}{2}+d
\end{pmatrix}
=
\begin{pmatrix}
\sqrt2 +a &\frac{\sqrt 2}{2}+b \\
-\sqrt2+c & \frac{\sqrt 2}{2}+d
\end{pmatrix},$$
but this has no solution.
\end{proof}

\begin{theorem}
Suppose that $A=A_{st}+A_{\mathcal{I}}\epsilon$ and $B=B_{st}+B_{\mathcal{I}}\epsilon$ are two matrices in $\mathbb{DQ}^{n\times n}$, where $A_{st},A_{\mathcal{I}},B_{st},B_{\mathcal{I}}\in\mathbb{Q}^{n\times n}$. Then $A$ and $B$ are unitarily similar in $\mathbb{DQ}^{n\times n}$ if and only if
\begin{enumerate}
\item $A_{st}$ and $B_{st}$ are unitarily similar in $\mathbb{Q}^{n\times n}$. That is, there exists a unitary matrix $U_{st}$ in $\mathbb{Q}^{n\times n}$ such that $U_{st}A_{st}U_{st}^{\ast}=B_{st}$.
\item For the same unitary matrix $U_{st}$, the matrix equation
$$
B_{st}X-XB_{st}=B_{\mathcal{I}}-U_{st}A_{\mathcal{I}}U_{st}^{\ast}
$$
has a skew-Hermitian solution $X\in\mathbb{Q}^{n\times n}$.
\end{enumerate}
\end{theorem}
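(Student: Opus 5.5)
The plan is to repeat the argument of Theorem \ref{14} verbatim over $\mathbb{Q}$. Only two features of the field are used there: Lemma \ref{fff} and associativity of matrix multiplication. Lemma \ref{fff} is already stated for $\mathbb{DQ}^{n\times n}$, and quaternion matrix multiplication is associative even though it is not commutative. Since $\epsilon$ commutes with quaternions and $\epsilon^{2}=0$, expanding $UAU^{\ast}$ with $U=U_{st}+U_{\mathcal{I}}\epsilon$ and $U^{\ast}=U_{st}^{\ast}+U_{\mathcal{I}}^{\ast}\epsilon$ gives
\[
UAU^{\ast}=U_{st}A_{st}U_{st}^{\ast}+\bigl(U_{st}A_{st}U_{\mathcal{I}}^{\ast}+U_{\mathcal{I}}A_{st}U_{st}^{\ast}+U_{st}A_{\mathcal{I}}U_{st}^{\ast}\bigr)\epsilon .
\]
Hence $UAU^{\ast}=B$ is equivalent to the same pair of equations (\ref{f}) and (\ref{ff}) as in the complex case. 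Throughout, I will keep the order of all factors, since quaternion entries do not commute.

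\textbf{Necessity.} Suppose $U$ is a unitary matrix with $UAU^{\ast}=B$. By Lemma \ref{fff}, $U_{st}$ is unitary and $X:=U_{st}U_{\mathcal{I}}^{\ast}$ is skew-Hermitian. The standard-part equation gives condition (1). From $U_{st}A_{st}U_{st}^{\ast}=B_{st}$ and $U_{st}^{\ast}U_{st}=U_{st}U_{st}^{\ast}=I$ we get $U_{st}A_{st}=B_{st}U_{st}$ and $A_{st}U_{st}^{\ast}=U_{st}^{\ast}B_{st}$. Only one-sided multiplication by $U_{st}$ or $U_{st}^{\ast}$ is involved, so this is valid over $\mathbb{Q}$. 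Substituting into the dual-part equation gives
\[
B_{st}X+U_{\mathcal{I}}U_{st}^{\ast}B_{st}=B_{\mathcal{I}}-U_{st}A_{\mathcal{I}}U_{st}^{\ast}.
\]
Since $(U_{st}U_{\mathcal{I}}^{\ast})^{\ast}=U_{\mathcal{I}}U_{st}^{\ast}$, we have $U_{\mathcal{I}}U_{st}^{\ast}=X^{\ast}=-X$. This gives exactly $B_{st}X-XB_{st}=B_{\mathcal{I}}-U_{st}A_{\mathcal{I}}U_{st}^{\ast}$, which is condition (2).

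\textbf{Sufficiency.} Given $U_{st}$ and a skew-Hermitian $X$ satisfying (1) and (2), set $U_{\mathcal{I}}=X^{\ast}U_{st}$. Then $U_{st}U_{\mathcal{I}}^{\ast}=U_{st}U_{st}^{\ast}X=X$, which is skew-Hermitian. By Lemma \ref{fff}, $U=U_{st}+U_{\mathcal{I}}\epsilon$ is unitary in $\mathbb{DQ}^{n\times n}$. Reversing the substitution above shows that the dual-part equation holds, and the standard-part equation is (1). Therefore $UAU^{\ast}=B$.

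The only point needing care, and hence the main (mild) obstacle, is the conjugate-transpose identity $(PQ)^{\ast}=Q^{\ast}P^{\ast}$ for quaternion matrices. It holds because $\overline{ab}=\overline{b}\,\overline{a}$ in $\mathbb{Q}$, and it is what justifies $(U_{st}U_{\mathcal{I}}^{\ast})^{\ast}=U_{\mathcal{I}}U_{st}^{\ast}$ and $(X^{\ast}U_{st})^{\ast}=U_{st}^{\ast}X$. Apart from this identity and the non-commutativity, which the computation never exploits, the proof of Theorem \ref{14} transfers unchanged.
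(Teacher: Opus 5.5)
Your proposal is correct and follows the paper's route exactly: the paper proves this theorem only by noting it is ``similar to that of Theorem~\ref{14},'' and you carry out that transfer explicitly, keeping the order of the quaternion factors and using $(PQ)^{\ast}=Q^{\ast}P^{\ast}$ together with Lemma~\ref{fff}. Your necessity argument also correctly takes $U_{st}$ to be the standard part of $U$, so (2) is established for that particular $U_{st}$; this is the right reading of ``for the same unitary matrix,'' since the paper's example shows (2) can fail for other choices of $U_{st}$.
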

\begin{proof}
The proof is similar to that of Theorem \ref{14}.
\end{proof}

L. Huang \cite{HL1996} proved that for matrices $A,B,C$  in $\mathbb{C}^{2n\times 2n}$ with complex adjoint forms, the matrix equation $AX-XB=C$ has a solution if and only if it has a solution with complex adjoint form. We obtain an analogous result for the matrix equation $AX+X^{\ast}B=C$ where
$A,B,C$ in $\mathbb{C}^{2n\times 2n}$ are of complex adjoint forms.
\begin{lemma}\label{ha}
Suppose that $A,B,C$ are matrices in $\mathbb{C}^{2n\times 2n}$ with complex adjoint forms. Then the following statements are equivalent.
\begin{enumerate}
\item [(a)] The matrix equation $AX+X^{\ast}B=C$ has a solution $X$ in $\mathbb{C}^{2n\times 2n}$.
\item [(b)] The matrix equation $AX+X^{\ast}B=C$ has a solution $X$ with complex adjoint form in $\mathbb{C}^{2n\times 2n}$.
\end{enumerate}
\end{lemma}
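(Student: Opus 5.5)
The implication (b)$\Rightarrow$(a) is trivial, so the work is (a)$\Rightarrow$(b). I will use an averaging (symmetrization) argument with the antiunitary structure that characterizes complex adjoint forms. Let
$$
J=\begin{pmatrix}0&I_n\\-I_n&0\end{pmatrix}.
$$
A direct block computation shows that $M\in\mathbb{C}^{2n\times 2n}$ has complex adjoint form, i.e.\ $M=\begin{pmatrix}M_1&M_2\\-\overline{M_2}&\overline{M_1}\end{pmatrix}$, if and only if $J\overline{M}J^{-1}=M$. Define the real-linear map $\phi(M)=J\overline{M}J^{-1}$. Since $J$ is real orthogonal with $J^{-1}=-J=J^{T}=J^{*}$, we have $\phi^2=\mathrm{id}$. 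Moreover $\phi$ is multiplicative, $\phi(MN)=\phi(M)\phi(N)$, and commutes with conjugate transpose, $\phi(M^{*})=\phi(M)^{*}$. The last identity holds because $(J\overline{M}J^{*})^{*}=J\overline{M}^{*}J^{*}=J\overline{M^{*}}J^{*}$.

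Suppose $X$ solves $AX+X^{*}B=C$. Applying $\phi$ to both sides and using $\phi(A)=A$, $\phi(B)=B$, $\phi(C)=C$ gives
$$
A\,\phi(X)+\phi(X)^{*}B=C.
$$
So $\phi(X)$ is also a solution. The equation is affine in $X$ and real-linear in $(X,X^{*})$, and averaging two solutions with real weights $\tfrac12,\tfrac12$ gives another solution. Hence
$$
Y=\tfrac12\bigl(X+\phi(X)\bigr)
$$
is a solution, since $AY+Y^{*}B=\tfrac12(C+C)=C$. Finally $\phi(Y)=\tfrac12(\phi(X)+X)=Y$ because $\phi$ is an involution, so $Y$ has complex adjoint form. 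This proves (b).

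The only step requiring care is checking that $\phi$ fixes exactly the complex adjoint matrices and commutes with $*$. Both are routine $2\times2$ block computations: $J\overline{M}J^{-1}$ sends the blocks $(M_{11},M_{12},M_{21},M_{22})$ to $(\overline{M_{22}},-\overline{M_{21}},-\overline{M_{12}},\overline{M_{11}})$. I expect no genuine obstacle.

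As an alternative, one could route the proof through Lemma~\ref{2}. That would require showing that the $*$-congruence can be chosen of quaternionic type, which is much harder, so I will use the direct averaging argument.
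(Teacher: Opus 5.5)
Your argument is correct and is essentially the paper's proof: the matrix $X'=\begin{pmatrix}\overline{X_4}&-\overline{X_3}\\-\overline{X_2}&\overline{X_1}\end{pmatrix}$ that the paper averages with $X$ is exactly your $\phi(X)=J\overline{X}J^{-1}$, so your $Y=\tfrac12\bigl(X+\phi(X)\bigr)$ is the same averaged solution. By phrasing the step through the multiplicative, $*$-preserving involution (the map the paper later calls $\Theta$), you also supply the verification that $X'$ is a solution, which the paper leaves as ``it can be verified.''
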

\begin{proof}
$(b)\Rightarrow(a)$ is obvious. We only show the proof of $(a)\Rightarrow(b)$. Assume
$$
A=\begin{pmatrix}
A_{1}&A_{2}\\
-\overline{A_{2}}&\overline{A_{1}}
\end{pmatrix}, B=\begin{pmatrix}
B_{1}&B_{2}\\
-\overline{B_{2}}&\overline{B_{1}}
\end{pmatrix}, C=\begin{pmatrix}
C_{1}&C_{2}\\
-\overline{C_{2}}&\overline{C_{1}}
\end{pmatrix}.$$
Since the solution $X$ is in $\mathbb{C}^{2n\times 2n}$, we may partition $X$ into blocks, where each block is an $n\times n$ complex matrix. That is,
$$
X=\begin{pmatrix}
X_{1}&X_{2}\\
X_{3}&X_{4}
\end{pmatrix}.
$$
Then equation $AX+X^{\ast}B=C$ is equivalent to

\begin{displaymath}
\left \{ \begin{array}{ll}
(A_{1}X_{1}+A_{2}X_{3})+(X_{1}^{\ast}B_{1}-X_{3}^{\ast}\overline{B_{2}})=C_{1}\\
(A_{1}X_{2}+A_{2}X_{4})+(X_{1}^{\ast}B_{2}+X_{3}^{\ast}\overline{B_{1}})=C_{2}\\
(-\overline{A_{2}}X_{1}+\overline{A_{1}}X_{3})+(X_{2}^{\ast}B_{1}-X_{4}^{\ast}\overline{B_{2}})=-\overline{C_{2}}\\
(-\overline{A_{2}}X_{2}+\overline{A_{1}}X_{4})+(X_{2}^{\ast}B_{2}+X_{4}^{\ast}\overline{B_{1}})=\overline{C_{1}}
\end{array} \right.
\end{displaymath}
If we assume
$$
X^{\prime}=\begin{pmatrix}
\overline{X_{4}}&-\overline{X_{3}}\\
-\overline{X_{2}}&\overline{X_{1}}
\end{pmatrix},
$$ then it can be verified that $X^{\prime}$ is still a solution of equation $AX+X^{\ast}B=C$. Therefore, $Y=\frac{X+X^{\prime}}{2}$ is also a solution of $AX+X^{\ast}B=C$. Notice that $Y$ has the complex adjoint form, which indicates our conclusion.
\end{proof}

X. Liu, Q. Wang et al. \cite{somany} characterized the consistency of quaternion $\ast$-Sylvester equation $AX+X^{\ast}B=C$ from the aspect of real representation. Here we use Lemma \ref{ha} to give a new  proof of this solvability criterion.

\begin{theorem}\label{201}
Suppose that $A, B, C\in\mathbb{Q}^{n\times n}$. Then the following statements are equivalent.
\begin{enumerate}
\item [(1)]Quaternion $\ast$-Sylvester equation $AX+X^{\ast}B=C$ has a solution.
\item [(2)]$\begin{pmatrix}
C&A\\
B&0
\end{pmatrix}$ and $\begin{pmatrix}
0&A\\
B&0
\end{pmatrix}$ are $\ast$-congruent. That is, there exists an invertible matrix $P\in\mathbb{Q}^{2n\times 2n}$ such that
$$
P^{\ast}\begin{pmatrix}
C&A\\
B&0
\end{pmatrix}P=\begin{pmatrix}
0&A\\
B&0
\end{pmatrix}.
$$
\end{enumerate}
\end{theorem}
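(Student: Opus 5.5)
The plan is to pass to complex adjoint matrices, apply Lemma \ref{2} (with $2n$ in place of $n$) over $\mathbb{C}$, and then pull everything back to $\mathbb{Q}$. Lemma \ref{ha} supplies the step from an arbitrary complex solution to one of complex adjoint form.

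\emph{The direction $(1)\Rightarrow(2)$.} This is direct and needs no adjoints. If $AX+X^{\ast}B=C$ with $X\in\mathbb{Q}^{n\times n}$, take
$$P=\begin{pmatrix} I_n&0\\ -X&I_n\end{pmatrix}\in\mathbb{Q}^{2n\times 2n},$$
which is invertible. Block multiplication gives
$$P^{\ast}\begin{pmatrix} C&A\\ B&0\end{pmatrix}P=\begin{pmatrix} C-AX-X^{\ast}B&A\\ B&0\end{pmatrix}=\begin{pmatrix} 0&A\\ B&0\end{pmatrix}.$$
The only care needed is the order of factors, since quaternion multiplication is noncommutative.

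\emph{The direction $(2)\Rightarrow(1)$.} Apply the complex adjoint map to the congruence $P^{\ast}MP=N$, where $M=\begin{pmatrix} C&A\\ B&0\end{pmatrix}$ and $N=\begin{pmatrix} 0&A\\ B&0\end{pmatrix}$. By properties (2) and (4) we get $(P_{\mathbb{C}})^{\ast}M_{\mathbb{C}}P_{\mathbb{C}}=N_{\mathbb{C}}$, and $P_{\mathbb{C}}$ is invertible by (5).

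The adjoint of a $2n\times 2n$ quaternion block matrix is not literally the block matrix of the adjoints. However, the two are related by a fixed real permutation matrix $\Pi$: conjugating $M_{\mathbb{C}}$ by $\Pi$ gives
$$\begin{pmatrix} C_{\mathbb{C}}&A_{\mathbb{C}}\\ B_{\mathbb{C}}&0\end{pmatrix},$$
and similarly for $N_{\mathbb{C}}$. This is a reordering of rows and columns from $(1,2,3,4)$ block order to $(1,3,2,4)$. Since $\Pi$ is real orthogonal, $\Pi^{\ast}=\Pi^{-1}$, so the matrix $S=\Pi^{T}P_{\mathbb{C}}\Pi$ satisfies
$$S^{\ast}\begin{pmatrix} C_{\mathbb{C}}&A_{\mathbb{C}}\\ B_{\mathbb{C}}&0\end{pmatrix}S=\begin{pmatrix} 0&A_{\mathbb{C}}\\ B_{\mathbb{C}}&0\end{pmatrix}.$$

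Lemma \ref{2} then gives a complex solution $Y\in\mathbb{C}^{2n\times 2n}$ of $A_{\mathbb{C}}Y+Y^{\ast}B_{\mathbb{C}}=C_{\mathbb{C}}$. By Lemma \ref{ha} we may replace $Y$ by a solution of complex adjoint form, say $Y=X_{\mathbb{C}}$ for some $X\in\mathbb{Q}^{n\times n}$. Properties (2)–(4) give $(AX+X^{\ast}B)_{\mathbb{C}}=C_{\mathbb{C}}$. Injectivity of $Q\mapsto Q_{\mathbb{C}}$ then gives $AX+X^{\ast}B=C$.

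\emph{Main obstacle.} The delicate step is the bookkeeping with the permutation $\Pi$. One must check that it carries the complex adjoint of the quaternion block matrix exactly to the block matrix of complex adjoints. One must also check that the conjugated congruence matrix $S$ remains invertible and preserves $\ast$-congruence; both follow because $\Pi$ is real and orthogonal. Everything else is a direct application of Lemma \ref{2} and Lemma \ref{ha}.
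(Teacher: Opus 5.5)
Your proposal is correct and follows essentially the same route as the paper. You use the identical matrix $P=\begin{pmatrix} I_n&0\\ -X&I_n\end{pmatrix}$ for $(1)\Rightarrow(2)$; for $(2)\Rightarrow(1)$ you pass to complex adjoints, apply the same block permutation (the paper's $Q$ in (\ref{00}), your $\Pi$) to turn $\begin{pmatrix} C&A\\ B&0\end{pmatrix}_{\mathbb{C}}$ into $\begin{pmatrix} C_{\mathbb{C}}&A_{\mathbb{C}}\\ B_{\mathbb{C}}&0\end{pmatrix}$, and then invoke Lemma \ref{2} and Lemma \ref{ha}, just as the paper does, only with the invertibility of the transported congruence matrix and the final injectivity step made explicit.
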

\begin{proof}
First we prove $(1)\Rightarrow(2)$. Suppose that $X\in\mathbb{Q}^{n\times n}$ is a solution of $AX+X^{\ast}B=C$ and let
$$
P=\begin{pmatrix}
I_{n}&0\\
-X&I_{n}
\end{pmatrix},
$$
then we can obtain
$$
P^{\ast}\begin{pmatrix}
C&A\\
B&0
\end{pmatrix}P=\begin{pmatrix}
0&A\\
B&0
\end{pmatrix}.
$$

Now we prove $(2)\Rightarrow(1)$. Assume $P$ is the invertible matrix in $\mathbb{Q}^{2n\times 2n}$ such that
$$
P^{\ast}\begin{pmatrix}
C&A\\
B&0
\end{pmatrix}P=\begin{pmatrix}
0&A\\
B&0
\end{pmatrix}.
$$
By taking the complex adjoint of the above equation, we obtain
$$
P_{\mathbb{C}}^{\ast}\begin{pmatrix}
C&A\\
B&0
\end{pmatrix}_{\mathbb{C}}P_{\mathbb{C}}=\begin{pmatrix}
0&A\\
B&0
\end{pmatrix}_{\mathbb{C}}.
$$
Let
\begin{equation}\label{00}
Q=\begin{pmatrix}
I_{n}&0&0&0\\
0&0&I_{n}&0\\
0&I_{n}&0&0\\
0&0&0&I_{n}
\end{pmatrix}\in\mathbb{C}^{4n\times 4n},
\end{equation}

then $Q$ is unitary and
$$
Q\begin{pmatrix}
C_{\mathbb{C}}&A_{\mathbb{C}}\\
B_{\mathbb{C}}&0
\end{pmatrix}Q^{\ast}=\begin{pmatrix}
C&A\\
B&0
\end{pmatrix}_{\mathbb{C}}.
$$
Furthermore, we obtain
$$
(Q^{\ast}P_{\mathbb{C}}^{\ast}Q)\begin{pmatrix}
C_{\mathbb{C}}&A_{\mathbb{C}}\\
B_{\mathbb{C}}&0
\end{pmatrix}(Q^{\ast}P_{\mathbb{C}}^{\ast}Q)^{\ast}=\begin{pmatrix}
0&A_{\mathbb{C}}\\
B_{\mathbb{C}}&0
\end{pmatrix}.
$$
That is, $\begin{pmatrix}
C_{\mathbb{C}}&A_{\mathbb{C}}\\
B_{\mathbb{C}}&0
\end{pmatrix}$ and $\begin{pmatrix}
0&A_{\mathbb{C}}\\
B_{\mathbb{C}}&0
\end{pmatrix}$ are $\ast$-congruent. By Lemma \ref{2}, the matrix equation $A_{\mathbb{C}}Y+Y^{\ast}B_{\mathbb{C}}=C_{\mathbb{C}}$ has a solution. According to Lemma \ref{ha}, the quaternion $\ast$-Sylvester equation $AX+X^{\ast}B=C$ has a solution.
\end{proof}

\begin{corollary}
Let $A_{i},B_{i},C_{i}\in \mathbb{Q}^{n\times n}$ for $i=1,\cdots,l$. Then the following statements are equivalent.
\begin{enumerate}
\item The system of $\ast$-Sylvester equations
\begin{displaymath}
\left \{ \begin{array}{ll}
A_{1}X+X^{\ast}B_{1}=C_{1}\\
\ \ \ \ \vdots\ \ \ \ \ \ \ \vdots\\
A_{l}X+X^{\ast}B_{l}=C_{l}
\end{array} \right.
\end{displaymath}
has a solution $X\in \mathbb{Q}^{n\times n}$.
\item There exists an invertible matrix $S\in \mathbb{Q}^{2n\times 2n}$ such that
$$
S^{\ast}\begin{pmatrix}
C_{i}&A_{i}\\
B_{i}&0
\end{pmatrix}S=\begin{pmatrix}
0&A_{i}\\
B_{i}&0
\end{pmatrix}, \ \mathrm{for\ all}\  i=1,\cdots,l.
$$
\end{enumerate}
\end{corollary}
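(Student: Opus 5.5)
The plan is to follow the proof of Theorem \ref{201}, replacing Lemma \ref{2} by Lemma \ref{2014} and extending Lemma \ref{ha} to systems.

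For $(1)\Rightarrow(2)$, suppose $X\in\mathbb{Q}^{n\times n}$ solves every equation $A_{i}X+X^{\ast}B_{i}=C_{i}$. Put
$$
S=\begin{pmatrix}
I_{n}&0\\
-X&I_{n}
\end{pmatrix}.
$$
The same block computation as in Theorem \ref{201} gives $S^{\ast}\begin{pmatrix}C_{i}&A_{i}\\ B_{i}&0\end{pmatrix}S=\begin{pmatrix}0&A_{i}\\ B_{i}&0\end{pmatrix}$ for each $i$. The point is that $S$ depends only on $X$, so one $S$ works for all $i$ at once.

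For $(2)\Rightarrow(1)$, take the complex adjoint of each of the $l$ identities. Properties (2) and (4) of the complex adjoint give
$$
S_{\mathbb{C}}^{\ast}\begin{pmatrix}C_{i}&A_{i}\\ B_{i}&0\end{pmatrix}_{\mathbb{C}}S_{\mathbb{C}}=\begin{pmatrix}0&A_{i}\\ B_{i}&0\end{pmatrix}_{\mathbb{C}}.
$$
Conjugate by the permutation matrix $Q$ of (\ref{00}). This shows that the single matrix $Q^{\ast}S_{\mathbb{C}}Q$ realizes all $l$ $\ast$-congruences between $\begin{pmatrix}(C_{i})_{\mathbb{C}}&(A_{i})_{\mathbb{C}}\\ (B_{i})_{\mathbb{C}}&0\end{pmatrix}$ and $\begin{pmatrix}0&(A_{i})_{\mathbb{C}}\\ (B_{i})_{\mathbb{C}}&0\end{pmatrix}$. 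It does so simultaneously, because $Q$ is independent of $i$ and the sign/adjoint conventions match those in Lemma \ref{2014}. Lemma \ref{2014} then gives one $Y\in\mathbb{C}^{2n\times 2n}$ with $(A_{i})_{\mathbb{C}}Y+Y^{\ast}(B_{i})_{\mathbb{C}}=(C_{i})_{\mathbb{C}}$ for all $i$.

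The remaining step, which I expect to be the main point needing care, is a system version of Lemma \ref{ha}. Write $Y=\begin{pmatrix}Y_{1}&Y_{2}\\ Y_{3}&Y_{4}\end{pmatrix}$ and set $Y'=\begin{pmatrix}\overline{Y_{4}}&-\overline{Y_{3}}\\ -\overline{Y_{2}}&\overline{Y_{1}}\end{pmatrix}$. The verification in Lemma \ref{ha} that $Y'$ solves $AX+X^{\ast}B=C$ uses only the adjoint-form structure of that one triple $(A,B,C)$. More precisely, $Y'=J\overline{Y}J^{-1}$ with $J=\begin{pmatrix}0&I\\ -I&0\end{pmatrix}$, and $J\overline{M}J^{-1}=M$ for every $M$ in adjoint form. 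Conjugating the $i$-th equation by this antilinear map therefore shows that the same $Y'$ solves every equation simultaneously.

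Hence $Z=\frac{Y+Y'}{2}$ solves all $l$ equations and has complex adjoint form, so $Z=X_{\mathbb{C}}$ for some $X\in\mathbb{Q}^{n\times n}$. Since $(A_{i}X+X^{\ast}B_{i})_{\mathbb{C}}=(C_{i})_{\mathbb{C}}$ and the complex adjoint map is injective, $X$ solves the quaternion system.
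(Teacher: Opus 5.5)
Your proposal is correct and is essentially the paper's argument. The paper's one-line proof (``follows from Lemma~\ref{2014} and Theorem~\ref{201}'') amounts to rerunning the complex-adjoint reduction of Theorem~\ref{201} with Lemma~\ref{2014} in place of Lemma~\ref{2}, using the same $S$ and the same permutation $Q$. Your observation that $Y'=J\overline{Y}J^{-1}$ does not depend on the equation, so that a single adjoint-form $Z=\frac{Y+Y'}{2}$ solves the whole system, supplies the system version of Lemma~\ref{ha} that the paper leaves implicit.
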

\begin{proof}
The conclusion follows from Lemma \ref{2014} and Theorem \ref{201}.
\end{proof}
\begin{corollary}
Suppose that $A=A_{st}+A_{\mathcal{I}}\epsilon$ and $B=B_{st}+B_{\mathcal{I}}\epsilon$ are two matrices in $\mathbb{DQ}^{n\times n}$, where $A_{st},A_{\mathcal{I}},B_{st},B_{\mathcal{I}}\in\mathbb{Q}^{n\times n}$. Then $A$ and $B$ are unitarily similar in $\mathbb{DQ}^{n\times n}$ if and only if
\begin{enumerate}
\item $A_{st}$ and $B_{st}$ are unitarily similar in $\mathbb{Q}^{n\times n}$. That is, there exists a unitary matrix $U_{st}$ in $\mathbb{Q}^{n\times n}$ such that $U_{st}A_{st}U_{st}^{\ast}=B_{st}$.
\item For the same unitary matrix $U_{st}$, there exists an invertible matrix $P\in\mathbb{Q}^{2n\times 2n}$ such that
$$
P\begin{pmatrix}
B_{\mathcal{I}}-U_{st}A_{\mathcal{I}}U_{st}^{\ast} &B_{st}\\
B_{st} & 0
\end{pmatrix}P^{\ast}=\begin{pmatrix}
0       &B_{st}\\
B_{st} & 0
\end{pmatrix},
$$
$$
P\begin{pmatrix}
0 &I_{n}\\
I_{n} & 0
\end{pmatrix}P^{\ast}=\begin{pmatrix}
0       &I_{n}\\
I_{n} & 0
\end{pmatrix}.
$$
\end{enumerate}
\end{corollary}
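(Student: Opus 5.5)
The plan is to combine the quaternion version of Theorem \ref{14} (the theorem preceding Lemma \ref{ha}) with the quaternion system criterion (the corollary just before this statement, which follows from Lemma \ref{2014} and Theorem \ref{201}). Condition (1) is the same in the Sylvester criterion and in the present statement. So it suffices to show the following: for the fixed unitary $U_{st}$, the equation
$$B_{st}X-XB_{st}=D,\qquad D:=B_{\mathcal{I}}-U_{st}A_{\mathcal{I}}U_{st}^{\ast},$$
has a skew-Hermitian solution $X\in\mathbb{Q}^{n\times n}$ if and only if the two $\ast$-congruence conditions hold with a common invertible $P\in\mathbb{Q}^{2n\times 2n}$.

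First I recast the skew-Hermitian requirement as a system of $\ast$-Sylvester equations in an unknown $Y$:
$$B_{st}Y+Y^{\ast}B_{st}=D,\qquad I_nY+Y^{\ast}I_n=0.$$
The second equation says exactly that $Y$ is skew-Hermitian. For such $Y$ we have $Y^{\ast}=-Y$, so the first equation becomes $B_{st}Y-YB_{st}=D$. Hence the system is solvable if and only if the original equation has a skew-Hermitian solution, which is the remark made after Lemma \ref{2014}. This step is routine.

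Next I apply the quaternion system corollary with $l=2$, taking $A_1=B_1=B_{st}$, $C_1=D$ and $A_2=B_2=I_n$, $C_2=0$. It says the system is solvable if and only if there is an invertible $S\in\mathbb{Q}^{2n\times2n}$ with
$$S^{\ast}\begin{pmatrix}D&B_{st}\\ B_{st}&0\end{pmatrix}S=\begin{pmatrix}0&B_{st}\\ B_{st}&0\end{pmatrix}\quad\text{and}\quad S^{\ast}\begin{pmatrix}0&I_n\\ I_n&0\end{pmatrix}S=\begin{pmatrix}0&I_n\\ I_n&0\end{pmatrix}.$$
Setting $P=S^{\ast}$ gives exactly the displayed form $P(\cdot)P^{\ast}$ in the statement, and conversely $S=P^{\ast}$. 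Since $S$ is invertible exactly when $S^{\ast}$ is, the two formulations are interchangeable.

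The only point needing care is the validity of the quaternion system corollary itself. Its proof is said to follow from Lemma \ref{2014} and Theorem \ref{201}, and I may assume it as an earlier result. If I wanted to double-check it, I would mimic Theorem \ref{201}. One direction uses the explicit choice $S=\begin{pmatrix}I&0\\-Y&I\end{pmatrix}$, which works simultaneously for both equations. For the other direction, I pass to complex adjoints and conjugate by the permutation $Q$ of (\ref{00}). Lemma \ref{2014} then yields a complex solution, and I symmetrize it to complex adjoint form as in Lemma \ref{ha}, applied equation by equation with the same averaging $Y\mapsto (Y+Y')/2$. This averaging preserves each equation, so the resulting common solution is quaternionic. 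With that, the corollary follows.
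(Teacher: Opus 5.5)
Your proposal is correct and matches the route the paper intends (it states this corollary without a separate proof): combine the quaternion analogue of Theorem~\ref{14} with the quaternion system corollary, encode skew-Hermitian-ness as the second equation $I_nY+Y^{\ast}I_n=0$, and set $P=S^{\ast}$. Your check that the system corollary holds is also sound: one common $S$ handles the forward direction, and the complex adjoint, the permutation $Q$, Lemma~\ref{2014} and the $\Theta$-averaging handle the converse.
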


\section{Trace-word criteria for unitary similarity in $\mathbb{DC}^{2\times 2}$}

In this section, we characterize the unitary similarity for matrices in $\mathbb{DC}^{2\times 2}$.
\begin{lemma}[Schur's Unitary Triangularization Theorem \cite{HR}]\label{1}
Suppose that $A\in\mathbb{C}^{n\times n}$, then $A$ is unitarily similar to a matrix of upper (or lower) triangular form.
\end{lemma}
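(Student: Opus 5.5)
We prove Lemma \ref{1} by induction on $n$, using that $\mathbb{C}$ is algebraically closed and that a unit vector can be completed to a unitary matrix. The case $n=1$ is trivial, since every $1\times 1$ matrix is already upper triangular and $U=(1)$ works.

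Now let $n\geq 2$ and assume the statement for $(n-1)\times(n-1)$ matrices.

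\textbf{Step 1 (an eigenvector).} The characteristic polynomial $\det(\lambda I_n-A)$ has a root $\lambda_1\in\mathbb{C}$. Hence there is a nonzero $x\in\mathbb{C}^n$ with $Ax=\lambda_1 x$, and we normalize so that $x^{\ast}x=1$.

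\textbf{Step 2 (the first unitary).} Extend $x$ to an orthonormal basis $x,v_2,\dots,v_n$ of $\mathbb{C}^n$ by Gram--Schmidt. Let $V=(x\ v_2\ \cdots\ v_n)$, which is unitary. Since $Ax=\lambda_1x$ and $v_i^{\ast}x=0$ for $i\geq 2$, the first column of $V^{\ast}AV$ is $(\lambda_1,0,\dots,0)^{T}$. Therefore
$$
V^{\ast}AV=\begin{pmatrix}\lambda_1 & w^{\ast}\\ 0 & A'\end{pmatrix},\qquad A'\in\mathbb{C}^{(n-1)\times(n-1)}.
$$

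\textbf{Step 3 (induction).} By the induction hypothesis there is a unitary $W$ with $W^{\ast}A'W=T'$ upper triangular. Put $U=V\begin{pmatrix}1&0\\0&W\end{pmatrix}$, a product of unitary matrices and hence unitary. Then
$$
U^{\ast}AU=\begin{pmatrix}\lambda_1 & w^{\ast}W\\ 0 & T'\end{pmatrix},
$$
which is upper triangular. So $A\sim_u U^{\ast}AU$, which completes the induction for upper triangular form.

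\textbf{Lower triangular form.} Apply the upper triangular result to $A^{\ast}$: there is a unitary $U$ with $U^{\ast}A^{\ast}U=T$ upper triangular. Taking conjugate transposes gives $U^{\ast}AU=T^{\ast}$, which is lower triangular.

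The only substantive ingredient is the existence of an eigenvalue, which is the fundamental theorem of algebra, together with the orthonormal completion in Step 2. The rest is routine block bookkeeping, so no step presents a genuine obstacle.
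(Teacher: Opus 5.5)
Your proof is correct: the eigenvector deflation, the block computation of $U^{\ast}AU$, and the passage to lower triangular form via $A^{\ast}$ all check out. The paper gives no proof of this lemma and simply cites \cite{HR}; your eigenvector-and-induction argument is the standard proof found there.
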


In particular, $A$ is unitarily similar to a diagonal matrix if and only if $A$ is normal and in this case, the diagonal elements are precisely all eigenvalues of $A$. For matrices in $\mathbb{DC}^{n\times n}$, we have the following conclusion.

\begin{lemma}\label{5}
Suppose that $A=A_{st}+A_{\mathcal{I}}\epsilon\in\mathbb{DC}^{n\times n}$, where $A_{st},A_{\mathcal{I}}\in\mathbb{C}^{n\times n}$. Then $A$ is unitarily similar to a matrix $A^{\prime}=A_{st}^{\prime}+A_{\mathcal{I}}^{\prime}\epsilon$ in $\mathbb{DC}^{n\times n}$, where $A_{st}^{\prime}$, the standard part of $A^{\prime}$ is of the upper (or lower) triangular form.
\end{lemma}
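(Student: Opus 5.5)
The plan is to lift the complex Schur triangularization of the standard part, Lemma \ref{1}, to the dual setting. The lift uses a unitary with zero infinitesimal part, which Lemma \ref{fff} certifies as unitary in $\mathbb{DC}^{n\times n}$. By Lemma \ref{1} there is a unitary matrix $V\in\mathbb{C}^{n\times n}$ such that $V A_{st}V^{\ast}$ is upper (or lower) triangular. I will regard $V$ as the dual complex matrix $U=V+0\cdot\epsilon\in\mathbb{DC}^{n\times n}$.

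First I check that $U$ is unitary in $\mathbb{DC}^{n\times n}$. By Lemma \ref{fff} (the dual complex case is the same computation, or one verifies directly), $U$ is unitary because $U_{st}=V$ is unitary and $U_{st}U_{\mathcal{I}}^{\ast}=0$ is trivially skew-Hermitian. One can also see it directly: $UU^{\ast}=VV^{\ast}+(V\cdot 0+0\cdot V^{\ast})\epsilon=I_{n}$, and likewise $U^{\ast}U=I_{n}$.

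Next I compute $UAU^{\ast}$ using the multiplication rule for dual matrices. Since $U_{\mathcal{I}}=0$, the cross terms vanish and
$$
UAU^{\ast}=VA_{st}V^{\ast}+VA_{\mathcal{I}}V^{\ast}\epsilon .
$$
I then set $A^{\prime}_{st}=VA_{st}V^{\ast}$, which is triangular by the choice of $V$, and $A^{\prime}_{\mathcal{I}}=VA_{\mathcal{I}}V^{\ast}$. This $A^{\prime}=A^{\prime}_{st}+A^{\prime}_{\mathcal{I}}\epsilon$ is unitarily similar to $A$ and has triangular standard part, as required.

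There is no real obstacle here. The only point needing care is that the unitary must be unitary in $\mathbb{DC}^{n\times n}$ and not merely in $\mathbb{C}^{n\times n}$, and taking a zero infinitesimal part settles this immediately. Note that no control over $A^{\prime}_{\mathcal{I}}$ is claimed, and none is needed.
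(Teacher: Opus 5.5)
Your proof is correct and is essentially the paper's argument: Schur-triangularize $A_{st}$ and lift the unitary to $\mathbb{DC}^{n\times n}$. The paper takes the infinitesimal part $U_{\mathcal{I}}=X^{\ast}$ from a solution of $U_{st}X+X^{\ast}U_{st}^{\ast}=0$, invoking Lemma~\ref{2} for existence. Since $X=0$ trivially solves this equation, your choice $U_{\mathcal{I}}=0$ is the simplest instance of the same construction and avoids that unnecessary appeal.
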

\begin{proof}
Since $A_{st}\in\mathbb{C}^{n\times n}$, then by Lemma \ref{1}, there is a unitary matrix $U_{st}\in\mathbb{C}^{n\times n}$ such that
$$
U_{st}A_{st}U_{st}^{\ast}=A_{st}^{\prime},
$$
where $A_{st}^{\prime}\in \mathbb{C}^{n\times n}$ is of the upper (or lower) triangular form. By Lemma \ref{2}, the matrix equation
\begin{equation}\label{3}
U_{st}X+X^{\ast}U_{st}^{\ast}=0
\end{equation}
has a solution in $\mathbb{C}^{n\times n}$, since the matrix
$$
\begin{pmatrix}
0&U_{st}\\
U_{st}^{\ast}&0
\end{pmatrix}
$$
naturally has the required block form, the invertible matrix $P$ can be taken as the identity matrix $I_{2n}$ such that
$$
P\begin{pmatrix}
0&U_{st}\\
U_{st}^{\ast}&0
\end{pmatrix}P^{\ast}=\begin{pmatrix}
0&U_{st}\\
U_{st}^{\ast}&0
\end{pmatrix}.
$$
Let $U_{\mathcal{I}}=X^{\ast}$, then it follows from (\ref{3}) that $U_{st}U_{\mathcal{I}}^{\ast}$ is skew-Hermitian. Let $U=U_{st}+U_{\mathcal{I}}\epsilon$, then $U$ is a unitary matrix in $\mathbb{DC}^{n\times n}$ and
$$
\begin{aligned}
UAU^{\ast}&=(U_{st}+U_{\mathcal{I}}\epsilon)(A_{st}+A_{\mathcal{I}}\epsilon)(U_{st}^{\ast}+U_{\mathcal{I}}^{\ast}\epsilon)\\
          &=U_{st}A_{st}U_{st}^{\ast}+(U_{st}A_{st}U_{\mathcal{I}}^{\ast}+U_{st}A_{\mathcal{I}}U_{st}^{\ast}+U_{\mathcal{I}}A_{st}U_{st}^{\ast})\epsilon\\
          &=A_{st}^{\prime}+(U_{st}A_{st}U_{\mathcal{I}}^{\ast}+U_{st}A_{\mathcal{I}}U_{st}^{\ast}+U_{\mathcal{I}}A_{st}U_{st}^{\ast})\epsilon.
\end{aligned}
$$
Put $A_{\mathcal{I}}^{\prime}=U_{st}A_{st}U_{\mathcal{I}}^{\ast}+U_{st}A_{\mathcal{I}}U_{st}^{\ast}+U_{\mathcal{I}}A_{st}U_{st}^{\ast}$ and $A^{\prime}=A_{st}^{\prime}+A_{\mathcal{I}}^{\prime}\epsilon$, then we have
$$
UAU^{\ast}=A^{\prime}.
$$
\end{proof}

Now, let us define the trace for a matrix in $\mathbb{DC}^{n\times n}$.
\begin{definition}
Suppose that $A=A_{st}+A_{\mathcal{I}}\epsilon\in\mathbb{DC}^{n\times n}$, where $A_{st},A_{\mathcal{I}}\in\mathbb{C}^{n\times n}$. Define the trace of $A$ as
$$
\mathrm{tr}(A)=\mathrm{tr}(A_{st})+\mathrm{tr}(A_{\mathcal{I}})\epsilon.
$$
For matrices $M_1,\ldots,M_k$ whose trace is defined, we use the
componentwise notation
\[
\operatorname{tr}(M_1,\ldots,M_k)
:=
\bigl(\operatorname{tr}(M_1),\ldots,\operatorname{tr}(M_k)\bigr).
\]
Thus, equality between two such expressions means componentwise equality.
\end{definition}

Similar to the traces of complex matrices, the traces of dual complex matrices possess several properties.

\begin{proposition}\label{4}
Suppose that $A=A_{st}+A_{\mathcal{I}}\epsilon,B=B_{st}+B_{\mathcal{I}}\epsilon\in\mathbb{DC}^{n\times n}$, where $A_{st},A_{\mathcal{I}},B_{st},B_{\mathcal{I}}\in\mathbb{C}^{n\times n}$. $\lambda\in\mathbb{DC}$. Then
\begin{enumerate}
\item [(1)]$\mathrm{tr}(A+B)=\mathrm{tr}(A)+\mathrm{tr}(B)$.
\item [(2)]$\mathrm{tr}(AB)=\mathrm{tr}(BA)$.
\item [(3)]$\lambda \mathrm{tr}(A)=\mathrm{tr}(\lambda A)$.
\item [(4)]If $B=P^{-1}AP$ for some invertible matrix $P\in\mathbb{DC}^{n\times n}$, then $\mathrm{tr}(A)=\mathrm{tr}(B)$.
\item [(5)]$\mathrm{tr}(A^{T})=\mathrm{tr}(A)$.
\item [(6)]$\mathrm{tr}(\overline{A})=\overline{\mathrm{tr}(A)}$.
\item [(7)]$\mathrm{tr}(A^{\ast})=\overline{\mathrm{tr}(A)}$.
\end{enumerate}
\end{proposition}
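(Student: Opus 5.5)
The plan is to reduce every item to the corresponding property of the ordinary complex trace by expanding in standard and infinitesimal parts. By the definition, $\mathrm{tr}(A)=\mathrm{tr}(A_{st})+\mathrm{tr}(A_{\mathcal{I}})\epsilon$. So each identity amounts to two complex identities, one for the standard part and one for the infinitesimal part. We may use the classical facts: $\mathrm{tr}$ on $\mathbb{C}^{n\times n}$ is $\mathbb{C}$-linear, satisfies $\mathrm{tr}(XY)=\mathrm{tr}(YX)$, is invariant under transpose, and commutes with entrywise conjugation.

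\textbf{Items (1), (3), (5), (6), (7).}
For (1), note $A+B=(A_{st}+B_{st})+(A_{\mathcal{I}}+B_{\mathcal{I}})\epsilon$ and apply linearity to each part.

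For (3), write $\lambda=\lambda_{st}+\lambda_{\mathcal{I}}\epsilon$. Then $\lambda A=\lambda_{st}A_{st}+(\lambda_{st}A_{\mathcal{I}}+\lambda_{\mathcal{I}}A_{st})\epsilon$. Its trace is $\lambda_{st}\mathrm{tr}(A_{st})+(\lambda_{st}\mathrm{tr}(A_{\mathcal{I}})+\lambda_{\mathcal{I}}\mathrm{tr}(A_{st}))\epsilon$, which is exactly the dual product $\lambda\,\mathrm{tr}(A)$.

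For (5), use $A^{T}=A_{st}^{T}+A_{\mathcal{I}}^{T}\epsilon$. For (6), use $\overline{A}=\overline{A_{st}}+\overline{A_{\mathcal{I}}}\epsilon$, where conjugation of a dual complex number conjugates both parts. Item (7) then follows from (5) and (6), since $A^{\ast}=\overline{A}^{T}$.

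\textbf{Item (2).}
Expand the product:
$$AB=A_{st}B_{st}+(A_{st}B_{\mathcal{I}}+A_{\mathcal{I}}B_{st})\epsilon,\qquad BA=B_{st}A_{st}+(B_{st}A_{\mathcal{I}}+B_{\mathcal{I}}A_{st})\epsilon.$$
The standard parts have equal traces by $\mathrm{tr}(XY)=\mathrm{tr}(YX)$. In the infinitesimal parts, $\mathrm{tr}(A_{st}B_{\mathcal{I}})$ matches $\mathrm{tr}(B_{\mathcal{I}}A_{st})$, and $\mathrm{tr}(A_{\mathcal{I}}B_{st})$ matches $\mathrm{tr}(B_{st}A_{\mathcal{I}})$.

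\textbf{Item (4).}
This follows from (2) and associativity of dual complex matrix multiplication: $\mathrm{tr}(P^{-1}AP)=\mathrm{tr}((AP)P^{-1})=\mathrm{tr}(A)$. It requires only that $P^{-1}$ exists in $\mathbb{DC}^{n\times n}$, which is the hypothesis.

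\textbf{Where care is needed.}
No step is a real obstacle. The only points needing attention are the cross terms in (2) and (3), and the convention in (6) that conjugating a dual complex number conjugates both of its parts.
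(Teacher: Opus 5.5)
Your proposal is correct. The paper states this proposition without proof, treating it as a routine consequence of the classical trace properties applied separately to the standard and infinitesimal parts, and your componentwise verification is exactly that argument. This includes the cross terms in (2) and (3), deducing (4) from (2), and deducing (7) from (5) and (6) under the convention that conjugation acts on both parts.
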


In 1940, W. Specht \cite{WS40} established the
following necessary and sufficient criterion, now known
as Specht's criterion.
\begin{lemma}[\cite{WS40}]\label{sh}
Let $A,B\in\mathbb{C}^{n\times n}$, then $A$ is unitarily similar to $B$ if and only if 
\begin{equation}\label{sh1}
\mathrm{tr}\ W(A,A^{\ast})=\mathrm{tr}\ W(B,B^{\ast})
\end{equation}
for any word $W(s,t)$ in the non-commuting variables $s$ and $t$.
\end{lemma}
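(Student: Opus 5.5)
Specht's criterion has an easy direction and a hard one, and I would treat them separately.

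For necessity, suppose $B=UAU^{\ast}$ with $U$ unitary. Then $B^{\ast}=UA^{\ast}U^{\ast}$. For any word $W(s,t)$, the products telescope because $U^{\ast}U=I$, so
$$W(B,B^{\ast})=U\,W(A,A^{\ast})\,U^{\ast}.$$
Cyclic invariance of the trace then gives (\ref{sh1}).

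For sufficiency, the plan is to reduce to representation theory of finite-dimensional $\ast$-algebras.

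\textbf{Step 1.} Let $\mathcal{A}$ be the free $\ast$-algebra over $\mathbb{C}$ generated by one element $s$, with $s^{\ast}=t$. Its elements are linear combinations of words in $s,t$. The assignments $s\mapsto A$ and $s\mapsto B$ define $\ast$-representations $\pi_A,\pi_B$ of $\mathcal{A}$ on $\mathbb{C}^n$. By linearity, (\ref{sh1}) says that the characters agree: $\mathrm{tr}\,\pi_A(p)=\mathrm{tr}\,\pi_B(p)$ for every $p\in\mathcal{A}$.

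\textbf{Step 2.} Pass to the finite-dimensional images $\mathcal{M}_A=\pi_A(\mathcal{A})$ and $\mathcal{M}_B=\pi_B(\mathcal{A})$. These are unital $\ast$-subalgebras of $\mathbb{C}^{n\times n}$, hence $C^{\ast}$-algebras.

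\textbf{Step 3.} Show that $\ker\pi_A=\ker\pi_B$. If $\pi_A(p)=0$, then $\pi_A(p^{\ast}p)=0$, so
$$\mathrm{tr}\,\pi_B(p)^{\ast}\pi_B(p)=\mathrm{tr}\,\pi_A(p^{\ast}p)=0,$$
which forces $\pi_B(p)=0$. The argument is symmetric. Hence $\phi:\pi_A(p)\mapsto\pi_B(p)$ is a well-defined unital $\ast$-isomorphism $\mathcal{M}_A\to\mathcal{M}_B$, and it preserves the trace.

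\textbf{Step 4.} Use the structure theorem: $\mathcal{M}_A\cong\bigoplus_k M_{n_k}(\mathbb{C})$. The identity representation of $\mathcal{M}_A$ on $\mathbb{C}^n$ decomposes, up to unitary equivalence, as $\bigoplus_k \mathrm{id}_{n_k}\otimes I_{m_k}$ with multiplicities $m_k$. Composing with $\phi^{-1}$ makes $\mathbb{C}^n$ with the $\mathcal{M}_B$ action into a second representation of $\mathcal{M}_A$, with multiplicities $m'_k$.

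\textbf{Step 5.} Evaluating the trace at the minimal central projections $e_k$ gives $\mathrm{tr}(e_k)=m_kn_k$ in the first representation and $m'_kn_k$ in the second. Trace preservation therefore yields $m_k=m'_k$.

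\textbf{Step 6.} Two $\ast$-representations of a finite-dimensional $C^{\ast}$-algebra with the same multiplicities are unitarily equivalent. So there is a unitary $U$ with
$$U\pi_A(p)U^{\ast}=\pi_B(p)\quad\text{for all } p.$$
Taking $p=s$ gives $UAU^{\ast}=B$.

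The main obstacle is Steps 4–6: the Wedderburn-type decomposition of $\ast$-closed matrix algebras, and the fact that unitary equivalence is determined by multiplicities. I would quote these as standard results rather than reprove them.
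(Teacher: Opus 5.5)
Your proof is correct. The paper does not prove this lemma; it quotes it from Specht \cite{WS40}. So there is no in-paper argument to match, and your $C^{\ast}$-algebraic route is a standard, complete proof once the structure theory of finite-dimensional $C^{\ast}$-algebras is quoted.

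The decisive use of the $\ast$-structure is your Step 3. Positivity, $\mathrm{tr}(X^{\ast}X)=\|X\|_F^2$, turns equality of all trace words into $\ker\pi_A=\ker\pi_B$, and hence into a trace-preserving $\ast$-isomorphism $\phi$. After that, the traces of the minimal central projections fix the multiplicities. Two minor points of wording:
\begin{enumerate}
\item The second representation of $\mathcal{M}_A$ is $x\mapsto\phi(x)$, i.e.\ the identity action of $\mathcal{M}_B$ composed with $\phi$, not with $\phi^{-1}$.
\item Step 5 applies trace preservation to $e_k$; this is legitimate because $e_k\in\mathcal{M}_A=\pi_A(\mathcal{A})$.
\end{enumerate}

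If you want an argument closer to the paper's own tools, you can split Steps 4--6 into two stages. First, the algebra generated by $A\oplus B$ and $A^{\ast}\oplus B^{\ast}$ is $\ast$-closed, hence semisimple. A finite-dimensional module over a semisimple complex algebra is determined by its character. This yields an invertible $P$ with $PA=BP$ and $PA^{\ast}=B^{\ast}P$. Second, $P^{\ast}P$ then commutes with $A$, so $Q=P(P^{\ast}P)^{-1/2}$ is a unitary with $QA=BQ$. This is exactly the polar-decomposition step in the proof of Lemma \ref{lem:replacement}.

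Your version is shorter because it folds both stages into the unitary classification of $\ast$-representations. The split version needs only Wedderburn theory plus an elementary square-root argument. It also separates the two roles of $A^{\ast}$ in the hypothesis: making the algebra semisimple, and forcing the intertwiner to intertwine $A^{\ast}$ as well, which the unitary upgrade requires.
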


Lemma  \ref{sh} can be used to show that two complex matrices are not unitarily similar. It may be useless in showing that two given complex matrices are unitarily similar because infinitely many conditions must be verified. Fortunately, there is an improvement to Specht's result due to C. Pearcy  \cite{PC} which says it suffices to check the trace identities in (\ref{sh1}) for only finitely many words.
\begin{lemma}[\cite{PC}]
Two complex matrices $A,B$ are unitarily similar if and only if $\mathrm{tr}\ W(A,A^{\ast})=\mathrm{tr}\ W(B,B^{\ast})$ for every word $W(s,t)$ of degree at most $2n^{2}$.
\end{lemma}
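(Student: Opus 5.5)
The plan is to reduce the statement to Lemma \ref{sh} (Specht's criterion), by showing that agreement of traces on words of bounded length forces agreement on \emph{all} words. Necessity is immediate: if $B=UAU^{\ast}$ with $U$ unitary, then $W(B,B^{\ast})=UW(A,A^{\ast})U^{\ast}$ for every word $W$, so the traces agree by cyclic invariance. The substance is the sufficiency direction.

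The key device is to treat $A$ and $B$ simultaneously through the block diagonal matrix $M=A\oplus B\in\mathbb{C}^{2n\times 2n}$. Then $M^{\ast}=A^{\ast}\oplus B^{\ast}$, and for every word
$$W(M,M^{\ast})=W(A,A^{\ast})\oplus W(B,B^{\ast}).$$
For $k\geq 0$ let $V_k$ be the linear span of all $W(M,M^{\ast})$ with $\deg W\leq k$, where the empty word gives $I_{2n}$. The spaces $V_0\subseteq V_1\subseteq\cdots$ lie in the space of block diagonal $2n\times 2n$ matrices, which has dimension $2n^{2}$, and $\dim V_0=1$.

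Next comes the stabilization step. We have
$$V_{k+1}=V_k+MV_k+M^{\ast}V_k.$$
Hence if $V_k=V_{k+1}$, then
$$V_{k+2}=V_{k+1}+MV_{k+1}+M^{\ast}V_{k+1}=V_k+MV_k+M^{\ast}V_k=V_{k+1},$$
and the chain is constant from $k$ on. As long as the chain has not stabilized, the dimension rises by at least one at each step. Since the dimension starts at $1$ and is at most $2n^{2}$, stabilization occurs at some $k\leq 2n^{2}-1$. Consequently every word $W(M,M^{\ast})$, of any length, lies in the span of words of degree at most $2n^{2}$.

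To conclude, take an arbitrary word $W$ and write
$$W(M,M^{\ast})=\sum_i c_i W_i(M,M^{\ast}),\qquad \deg W_i\leq 2n^{2}.$$
Reading off the two diagonal blocks gives the \emph{same} coefficients $c_i$ for both $A$ and $B$:
$$W(A,A^{\ast})=\sum_i c_iW_i(A,A^{\ast}),\qquad W(B,B^{\ast})=\sum_i c_iW_i(B,B^{\ast}).$$
Taking traces and using the hypothesis $\mathrm{tr}\,W_i(A,A^{\ast})=\mathrm{tr}\,W_i(B,B^{\ast})$ yields $\mathrm{tr}\,W(A,A^{\ast})=\mathrm{tr}\,W(B,B^{\ast})$ for every word. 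Lemma \ref{sh} then gives $A\sim_u B$.

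The only delicate point I foresee is exactly this simultaneity: the linear dependencies among words must hold with identical coefficients for $A$ and $B$. That is why one works with $A\oplus B$ rather than with $A$ and $B$ separately, and why the dimension bound is $2n^{2}$ rather than $n^{2}$.
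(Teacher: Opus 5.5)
Your proof is correct. The word spans $V_k$ for $M=A\oplus B$ form an increasing chain in the $2n^{2}$-dimensional space of block-diagonal matrices, and the chain stays constant once it fails to grow. Hence every word is a linear combination of words of degree at most $2n^{2}-1$, with coefficients common to $A$ and $B$, and Specht's criterion (Lemma \ref{sh}) then applies. The paper gives no proof of this lemma; it simply cites Pearcy \cite{PC}. Your argument is essentially the standard dimension count behind Pearcy's bound, so it makes the citation self-contained, relying only on Specht's theorem.
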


In particular, F. Murnaghan \cite{FM} obtained the trace words for $2\times 2$ complex matrices.

\begin{lemma}[\cite{FM}]
Assume $A$ and $B$ are two $2\times 2$ complex matrices. Then $A$ is unitarily similar to $B$ if and only if $\mathrm{tr}(A,A^{2},A^{\ast}A)=\mathrm{tr}(B,B^{2},B^{\ast}B)$.
\end{lemma}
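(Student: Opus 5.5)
The plan is a direct argument via Schur triangularization, which is the classical route to Murnaghan's criterion. Necessity is immediate. If $B=UAU^{\ast}$ with $U$ unitary, then $B^{2}=UA^{2}U^{\ast}$ and $B^{\ast}B=UA^{\ast}AU^{\ast}$. By invariance of the trace under similarity, the three traces coincide.

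For sufficiency, I would first use Lemma \ref{1} to replace $A$ and $B$ by unitarily similar upper triangular matrices
$$
A\sim_u\begin{pmatrix}\lambda_1&x\\0&\lambda_2\end{pmatrix},\qquad B\sim_u\begin{pmatrix}\mu_1&y\\0&\mu_2\end{pmatrix}.
$$
Since the three traces are unitary invariants, the hypothesis transfers to these triangular forms. Equality of $\mathrm{tr}(A)$ and $\mathrm{tr}(A^{2})$ gives $\lambda_1+\lambda_2=\mu_1+\mu_2$ and $\lambda_1^2+\lambda_2^2=\mu_1^2+\mu_2^2$. Hence the elementary symmetric functions $\lambda_1\lambda_2=\mu_1\mu_2$ agree, so $\{\lambda_1,\lambda_2\}=\{\mu_1,\mu_2\}$ as multisets. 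Next, $\mathrm{tr}(A^{\ast}A)$ equals the squared Frobenius norm, $|\lambda_1|^2+|\lambda_2|^2+|x|^2$. The eigenvalue moduli agree, so $|x|=|y|$.

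The remaining step is to build the unitary explicitly, and I expect this to be the main, though modest, obstacle. Two issues need handling.

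\textbf{Phase.} The diagonal unitary $D=\mathrm{diag}(1,e^{i\theta})$ sends the off-diagonal entry $x$ to $e^{-i\theta}x$ while preserving the diagonal. So with $|x|=|y|$, we can make $x=y$.

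\textbf{Order of eigenvalues.} If the triangular forms have $\lambda_1,\lambda_2$ in the opposite order from $\mu_1,\mu_2$, we need to swap them. For this I would invoke Lemma \ref{1} once more, choosing the Schur basis for $B$ whose first vector is an eigenvector for the desired eigenvalue. Schur's theorem allows any prescribed order of the diagonal, since the first column can be taken to be a unit eigenvector for any chosen eigenvalue. This lets us assume $(\lambda_1,\lambda_2)=(\mu_1,\mu_2)$ from the outset.

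Combining the prescribed-order Schur forms with the diagonal phase adjustment produces a unitary carrying $A$ to $B$, which completes the sufficiency direction.
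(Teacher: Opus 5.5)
Your proof is correct. The paper only cites this lemma from Murnaghan and gives no proof of it, but the standard-part step in its proof of Theorem~\ref{ppp} is essentially your argument: Schur form, equal characteristic polynomials from $\mathrm{tr}$ and $\mathrm{tr}(\cdot^{2})$, $|x|=|y|$ from $\mathrm{tr}(A^{\ast}A)$, and a diagonal phase unitary. The one difference is the swapped-eigenvalue case. There the paper writes down an explicit unitary $U_1=\begin{pmatrix}u&v\\-\overline{v}&\overline{u}\end{pmatrix}$, whereas you choose a Schur basis with prescribed diagonal order. Your choice is simpler, and it is valid because $|x|^{2}=\mathrm{tr}(A^{\ast}A)-|\lambda_1|^{2}-|\lambda_2|^{2}$ does not depend on which Schur form is used.
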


Inspired by F. Murnaghan's result, we discuss the role of trace words in characterizing unitary similarity for $2\times 2$ dual complex matrices $A$ and $B$ in cases. More precisely, it is divided into three cases: whether the standard part $A_{st}$ of $A$ is normal, and if $A_{st}$ is normal, whether the eigenvalues of $A_{st}$ are equal.

\begin{theorem}\label{ppp}
Suppose that $A=A_{st}+A_{\mathcal{I}}\epsilon,B=B_{st}+B_{\mathcal{I}}\epsilon\in\mathbb{DC}^{2\times 2}$ and $A_{st}$ is not normal. Then $A$ and $B$ are unitarily similar in $\mathbb{DC}^{2\times 2}$ if and only if
\begin{equation}\label{6}
\mathrm{tr}(A,A^{2},A^{\ast}A)=\mathrm{tr}(B,B^{2},B^{\ast}B).
\end{equation}
\end{theorem}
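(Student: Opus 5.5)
The plan is to prove necessity by invariance of traces, and sufficiency by first reducing to the case $A_{st}=B_{st}$ upper triangular and then solving the Sylvester equation (\ref{15}) of Theorem \ref{14} explicitly for a skew-Hermitian $X$.

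\emph{Necessity.} If $UAU^{\ast}=B$ with $U$ unitary in $\mathbb{DC}^{2\times 2}$, then $B^{2}=UA^{2}U^{\ast}$ and $B^{\ast}B=UA^{\ast}AU^{\ast}$. Proposition \ref{4}(4) then gives (\ref{6}).

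\emph{Sufficiency, reduction.} Taking standard parts in (\ref{6}) gives $\mathrm{tr}(A_{st},A_{st}^{2},A_{st}^{\ast}A_{st})=\mathrm{tr}(B_{st},B_{st}^{2},B_{st}^{\ast}B_{st})$. By Murnaghan's lemma there is a unitary $V$ with $VA_{st}V^{\ast}=B_{st}$. As in the proof of Lemma \ref{5}, $V$ lifts to a unitary $V+0\cdot\epsilon\in\mathbb{DC}^{2\times 2}$. Conjugating $A$ by it preserves both unitary similarity and the left side of (\ref{6}), by the necessity part. So we may assume $A_{st}=B_{st}$. Applying one common Schur unitary (Lemma \ref{1}), lifted the same way, to both $A$ and $B$, we may further assume
\[
A_{st}=B_{st}=T=\begin{pmatrix}\lambda_1&c\\0&\lambda_2\end{pmatrix}.
\]
Here $c\neq0$ precisely because $A_{st}$ is not normal.

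\emph{Translating the dual parts of (\ref{6}).} Put $D=B_{\mathcal I}-A_{\mathcal I}=(d_{ij})$. The dual parts of the three traces give
\[
\mathrm{tr}(D)=0,\qquad \mathrm{tr}(TD)=0,\qquad \mathrm{Re}\,\mathrm{tr}(T^{\ast}D)=0.
\]
The second comes from the dual part of $\mathrm{tr}(A^{2})$, which is $2\mathrm{tr}(TA_{\mathcal I})$. The third comes from the dual part of $\mathrm{tr}(A^{\ast}A)$, which is $2\mathrm{Re}\,\mathrm{tr}(T^{\ast}A_{\mathcal I})$.

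\emph{Solving (\ref{15}) with $U_{st}=I$.} By Theorem \ref{14} it suffices to find a skew-Hermitian $X$ with $TX-XT=D$. Try
\[
X=\begin{pmatrix} ia & z\\ -\bar z & ib\end{pmatrix},\qquad a,b\in\mathbb R,\ z\in\mathbb C.
\]
A direct computation gives
\[
TX-XT=\begin{pmatrix}-c\bar z & (\lambda_1-\lambda_2)z+ic(b-a)\\ (\lambda_1-\lambda_2)\bar z & c\bar z\end{pmatrix}.
\]
We match this with $D$ entry by entry.
\begin{enumerate}
\item[(i)] Since $c\neq0$, the $(1,1)$ entry forces $\bar z=-d_{11}/c$.
\item[(ii)] The $(2,2)$ entry then matches because $\mathrm{tr}(D)=0$.
\item[(iii)] For the $(2,1)$ entry, $\mathrm{tr}(TD)=(\lambda_1-\lambda_2)d_{11}+c\,d_{21}=0$ gives exactly $d_{21}=(\lambda_1-\lambda_2)\bar z$.
\item[(iv)] For the $(1,2)$ entry, we need $d_{12}-(\lambda_1-\lambda_2)z\in ic\mathbb R$. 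This is equivalent to $\bar c d_{12}-\bar c(\lambda_1-\lambda_2)z=\bar c d_{12}+(\lambda_1-\lambda_2)\bar d_{11}$ being purely imaginary. Its real part equals $\mathrm{Re}\bigl((\bar\lambda_1-\bar\lambda_2)d_{11}+\bar c d_{12}\bigr)=\mathrm{Re}\,\mathrm{tr}(T^{\ast}D)=0$. Hence a real value of $b-a$ exists.
\end{enumerate}
Theorem \ref{14} then yields $A\sim_u B$.

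The main point needing care is the reduction step. One must choose the same standard-part unitary on both sides, lift it to a dual unitary with zero infinitesimal part, and check that the three trace identities survive the conjugation. With $U_{st}=I$ chosen after the reduction, the final computation is routine. No freedom in the choice of $U_{st}$ is needed here, unlike the normal case.
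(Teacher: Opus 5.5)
Your proof is correct and follows essentially the same route as the paper: reduce to $A_{st}=B_{st}=T$ upper triangular with nonzero off-diagonal entry, then use the dual parts of the three trace identities to solve $TX-XT=B_{\mathcal I}-A_{\mathcal I}$ explicitly for a skew-Hermitian $X$, and conclude by Theorem~\ref{14} with $U_{st}=I$. The only difference is in the reduction step. You obtain $A_{st}=B_{st}$ directly from Murnaghan's lemma applied to the standard parts, followed by a common Schur unitary. The paper instead triangularizes $A$ and $B$ separately, matches the diagonal entries and $|c_2|=|a_2|$ by hand, and writes down an explicit unitary $U_1$. Your shortcut is legitimate, since the paper states Murnaghan's lemma, and it is somewhat cleaner.
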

\begin{proof}
We prove the necessity first. Assume that there is a unitary matrix $U=U_{st}+U_{\mathcal{I}}\epsilon$ in $\mathbb{DC}^{2\times 2}$ such that $UAU^{\ast}=B$. This means
$$
UA^{2}U^{\ast}=B^{2},\ UA^{\ast}AU^{\ast}=B^{\ast}B.
$$
It follows from $(4)$ of Proposition \ref{4} that $\mathrm{tr}(A,A^{2},A^{\ast}A)=\mathrm{tr}(B,B^{2},B^{\ast}B)$. We can see that this statement is always true no matter whether $A$ or $B$ is normal or not and actually all the trace words of $A$ and $B$ are equal.

Now we prove the sufficiency. Assume that (\ref{6}) holds. By Lemma \ref{5}, we may assume that both the standard parts of $A$ and $B$ are of the upper triangular forms. For convenience, we let
$$
A=\begin{pmatrix}
a_{1}&a_{2}\\
0 &a_{4}
\end{pmatrix}+\begin{pmatrix}
b_{1}&b_{2}\\
b_{3} &b_{4}
\end{pmatrix}\epsilon,\ B=\begin{pmatrix}
c_{1}&c_{2}\\
0 &c_{4}
\end{pmatrix}+\begin{pmatrix}
d_{1}&d_{2}\\
d_{3} &d_{4}
\end{pmatrix}\epsilon.
$$

Since $A_{st}$ is not normal, it can be seen that $a_{2}$ is not equal to zero. From (\ref{6}) we obtain
\begin{equation}
\mathrm{tr}(A)=(a_{1}+a_{4})+(b_{1}+b_{4})\epsilon=(c_{1}+c_{4})+(d_{1}+d_{4})\epsilon=\mathrm{tr}(B)
\end{equation}
\begin{equation}
\begin{aligned}
\mathrm{tr}(A^{2})&=(a_{1}^{2}+a_{4}^{2})+2(a_{1}b_{1}+a_{2}b_{3}+a_{4}b_{4})\epsilon\\
         &=(c_{1}^{2}+c_{4}^{2})+2(c_{1}d_{1}+c_{2}d_{3}+c_{4}d_{4})\epsilon=\mathrm{tr}(B^{2})
\end{aligned}
\end{equation}
\begin{equation}
\begin{aligned}
\mathrm{tr}(A^{\ast}A)&=(|a_{1}|^{2}+|a_{2}|^{2}+|a_{4}|^{2})+2\mathrm{Re}(\overline{a_{1}}b_{1}+\overline{a_{2}}b_{2}+\overline{a_{4}}b_{4})\epsilon\\
             &=(|c_{1}|^{2}+|c_{2}|^{2}+|c_{4}|^{2})+2\mathrm{Re}(\overline{c_{1}}d_{1}+\overline{c_{2}}d_{2}+\overline{c_{4}}d_{4})\epsilon=\mathrm{tr}(B^{\ast}B)
\end{aligned}
\end{equation}
That is,
\begin{align}
&a_{1}+a_{4}=c_{1}+c_{4}\label{7}\\
&b_{1}+b_{4}=d_{1}+d_{4}\label{8}\\
&a_{1}^{2}+a_{4}^{2}=c_{1}^{2}+c_{4}^{2}\label{9}\\
&a_{1}b_{1}+a_{2}b_{3}+a_{4}b_{4}=c_{1}d_{1}+c_{2}d_{3}+c_{4}d_{4}\label{10}\\
&|a_{1}|^{2}+|a_{2}|^{2}+|a_{4}|^{2}=|c_{1}|^{2}+|c_{2}|^{2}+|c_{4}|^{2}\label{11}\\
&\mathrm{Re}(\overline{a_{1}}b_{1}+\overline{a_{2}}b_{2}+\overline{a_{4}}b_{4})=\mathrm{Re}(\overline{c_{1}}d_{1}+\overline{c_{2}}d_{2}+\overline{c_{4}}d_{4})\label{12}
\end{align}
By (\ref{7}) and (\ref{9}) we have
\begin{equation}\label{13}
a_{1}a_{4}=c_{1}c_{4}.
\end{equation}
Further, by (\ref{7}) and (\ref{13}) we deduce that $c_{1}$ and $c_{4}$ are solutions of the equation
$$
x^{2}-(a_{1}+a_{4})x+a_{1}a_{4}=0.
$$
Therefore
\begin{displaymath}
\left \{ \begin{array}{ll}
c_{1}=a_{1}\\
c_{4}=a_{4}
\end{array} \right.
 \text{or} \
\left \{ \begin{array}{ll}
c_{1}=a_{4}\\
c_{4}=a_{1}
\end{array} \right.
\end{displaymath}
Regardless of which set of solutions is taken, substituting it into (\ref{11}) gives $|a_{2}|=|c_{2}|$. Hence $c_{2}$ is nonzero. We may assume $c_{2}=a_{2}e^{\boldsymbol{i}\theta}$, where $0\leq\theta<2\pi$. Hence,
$$
B_{st}=\begin{pmatrix}
a_{1}&a_{2}e^{\boldsymbol{i}\theta}\\
0&a_{4}
\end{pmatrix}\  \text{or} \ B_{st}=\begin{pmatrix}
a_{4}&a_{2}e^{\boldsymbol{i}\theta}\\
0&a_{1}
\end{pmatrix}.
$$
In fact, no matter what form $B_{st}$ has, $B_{st}$ and $A_{st}$ are unitarily similar. If $B_{st}=\begin{pmatrix}
a_{1}&a_{2}e^{\boldsymbol{i}\theta}\\
0&a_{4}
\end{pmatrix},$ let $U_{1}=\begin{pmatrix}
1&0\\
0&e^{-\boldsymbol{i}\theta}
\end{pmatrix},$ then $U_{1}$ is unitary and $U_{1}A_{st}U_{1}^{\ast}=B_{st}$. If $B_{st}=\begin{pmatrix}
a_{4}&a_{2}e^{\boldsymbol{i}\theta}\\
0&a_{1}
\end{pmatrix},$ let $U_{1}=\begin{pmatrix}
u&v\\
-\overline{v}&\overline{u}
\end{pmatrix},$
where
$$
u=e^{\boldsymbol{i}\theta/2}|a_{2}|(|a_{1}-a_{4}|^{2}+|a_{2}|^{2})^{-\frac{1}{2}},$$
$$
 v=e^{\boldsymbol{i}\theta/2}(\overline{a_{4}}-\overline{a_{1}})|a_{2}|(\overline{a_{2}})^{-1}(|a_{1}-a_{4}|^{2}+|a_{2}|^{2})^{-\frac{1}{2}}.
$$ Then $U_{1}$ is unitary and $U_{1}A_{st}U_{1}^{\ast}=B_{st}$.

Therefore, after replacing $A$ by $U_1AU_1^*$ and relabelling the
entries of its infinitesimal part, we may assume that
\[
A_{st}=B_{st}=
\begin{pmatrix}
a_1&a_2\\
0&a_4
\end{pmatrix}.
\]
The trace identities (\ref{8}), (\ref{10}), and (\ref{12}) remain valid because
the trace is invariant under unitary similarity. 

Let $U_{st}=I_2$. By Theorem \ref{14}, it is sufficient to prove that
the matrix equation
\begin{equation}\label{4.14}
B_{st}X-XB_{st}=B_{\mathcal{I}}-A_{\mathcal{I}}                                     
\end{equation}
has a skew-Hermitian solution $X\in\mathbb{C}^{2\times2}$. Write
\[
X=
\begin{pmatrix}
x_1&x_2\\
x_3&x_4
\end{pmatrix}.
\]
The condition $X^*=-X$ is equivalent to
\begin{numcases}{}
\overline{x_1}+x_1=0,  \label{4.15}\\
\overline{x_3}+x_2=0,   \label{4.16}\\
\overline{x_4}+x_4=0,    \label{4.17}\\
\overline{x_2}+x_3=0.  \label{4.18}
\end{numcases}{}

A direct computation gives
\[
B_{st}X-XB_{st}
=
\begin{pmatrix}
a_2x_3&
(a_1-a_4)x_2+a_2(x_4-x_1)\\
(a_4-a_1)x_3&
-a_2x_3
\end{pmatrix}.
\]
Hence, equation (\ref{4.14}) is equivalent to

\begin{numcases}{}
a_2x_3=d_1-b_1, \label{4.19}\\
-a_2x_3=d_4-b_4,  \label{4.20}\\
(a_1-a_4)x_2+a_2(x_4-x_1)=d_2-b_2,  \label{4.21}\\
(a_4-a_1)x_3=d_3-b_3.\label{4.22}
\end{numcases}{}

It follows from (\ref{8}) that
\[
d_4-b_4=-(d_1-b_1).
\]
Moreover, (\ref{8}) and (\ref{10}) imply
\[
a_2(d_3-b_3)=(a_4-a_1)(d_1-b_1).
\]
We now set
\[
x_3=\frac{d_1-b_1}{a_2},
\qquad
x_2=-\overline{x_3}.
\]
Then (\ref{4.16}), (\ref{4.18}), (\ref{4.19}), (\ref{4.20}) and (\ref{4.22}) are satisfied.

Define
\[
z=
\frac{d_2-b_2-(a_1-a_4)x_2}{a_2}.                          
\]
We claim that $z$ is purely imaginary. Indeed, by (\ref{8}) and
(\ref{12}),
\[
\operatorname{Re}\!\left(
(\overline{a_1}-\overline{a_4})(d_1-b_1)
+\overline{a_2}(d_2-b_2)
\right)=0.
\]
Since
\[
x_2=-\frac{\overline{d_1-b_1}}{\overline{a_2}},
\]
we have
\[
\begin{aligned}
|a_2|^2\operatorname{Re}(z)
&=
\operatorname{Re}\!\left(
\overline{a_2}(d_2-b_2)
+(a_1-a_4)\overline{(d_1-b_1)}
\right)\\
&=
\operatorname{Re}\!\left(
\overline{a_2}(d_2-b_2)
+(\overline{a_1}-\overline{a_4})(d_1-b_1)
\right)\\
&=0.
\end{aligned}
\]
Thus $\operatorname{Re}(z)=0$.

Finally, take
\[
x_1=0,\qquad x_4=z.
\]
Then (\ref{4.15}) and (\ref{4.17}) hold, and the definition of $z$ shows that
(\ref{4.21}) also holds. Therefore, $X$ is a skew-Hermitian solution of
(\ref{4.14}).

Let
\[
V=I_2+X^*\epsilon.
\]
By Theorem \ref{14}, $V$ is unitary and
\[
V(U_1AU_1^*)V^*=B.
\]
Consequently,
\[
U=VU_1=U_1+X^*U_1\epsilon
\]
is unitary and satisfies
\[
UAU^*=B.
\]
Therefore, $A$ and $B$ are unitarily similar.
\end{proof}

When $A_{st}$ is normal, we still have the following conclusions.

\begin{proposition}\label{j}
Suppose that $A=A_{st}+A_{\mathcal{I}}\epsilon,B=B_{st}+B_{\mathcal{I}}\epsilon\in\mathbb{DC}^{2\times 2}$ and $A_{st}$ is normal. Assume further that the two eigenvalues of $A_{\rm st}$ are equal.  Then $A$ is unitarily similar to $B$ if and only if the ordered pairs $\{A_{st},A_{\mathcal{I}}\}$ and $\{B_{st},B_{\mathcal{I}}\}$ are simultaneously unitarily similar, i.e., there exists a unitary matrix $U_{st}$ such that $U_{st}A_{st}U_{st}^{\ast}=B_{st}$ and $U_{st}A_{\mathcal{I}}U_{st}^{\ast}=B_{\mathcal{I}}$.
\end{proposition}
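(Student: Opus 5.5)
The key observation is that when $A_{st}$ is normal with equal eigenvalues, it is a scalar matrix. Indeed, $A_{st}$ is unitarily similar to $\mathrm{diag}(\lambda,\lambda)=\lambda I_2$, so $A_{st}=\lambda I_2$ for some $\lambda\in\mathbb{C}$. The argument then goes through Theorem \ref{14}: the Sylvester equation (\ref{15}) degenerates completely when its coefficient matrix is scalar.

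For sufficiency, assume there is a unitary $U_{st}$ with $U_{st}A_{st}U_{st}^{\ast}=B_{st}$ and $U_{st}A_{\mathcal{I}}U_{st}^{\ast}=B_{\mathcal{I}}$. Take $U=U_{st}+0\cdot\epsilon$. By Lemma \ref{fff}, $U$ is unitary in $\mathbb{DC}^{2\times2}$, since $U_{st}U_{\mathcal{I}}^{\ast}=0$ is skew-Hermitian. A direct expansion gives
\[
UAU^{\ast}=U_{st}A_{st}U_{st}^{\ast}+U_{st}A_{\mathcal{I}}U_{st}^{\ast}\epsilon=B.
\]
Equivalently, $X=0$ solves (\ref{15}). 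This direction does not even use the hypothesis on $A_{st}$.

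For necessity, suppose $A\sim_u B$ in $\mathbb{DC}^{2\times2}$. Theorem \ref{14} gives a unitary $U_{st}$ with $U_{st}A_{st}U_{st}^{\ast}=B_{st}$. It also gives a skew-Hermitian $X$ with
\[
B_{st}X-XB_{st}=B_{\mathcal{I}}-U_{st}A_{\mathcal{I}}U_{st}^{\ast}.
\]
Since $A_{st}=\lambda I_2$, we get $B_{st}=U_{st}(\lambda I_2)U_{st}^{\ast}=\lambda I_2$. Hence the left-hand side is $\lambda X-X\lambda=0$, which forces $B_{\mathcal{I}}=U_{st}A_{\mathcal{I}}U_{st}^{\ast}$. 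This is exactly simultaneous unitary similarity of the ordered pairs via the same $U_{st}$.

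The argument has no real obstacle. The only point to state carefully is that a normal matrix with a single repeated eigenvalue is scalar: if $A_{st}=W\,\mathrm{diag}(\lambda,\lambda)W^{\ast}$, then $A_{st}=\lambda WW^{\ast}=\lambda I_2$. The remark after Lemma \ref{1} supplies this diagonalization. Nothing specific to size $2$ is used, so the same proof works in $\mathbb{DC}^{n\times n}$ whenever $A_{st}$ is scalar.
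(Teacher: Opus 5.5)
Your proof is correct and follows the paper's argument. In both, $A_{st}$ and hence $B_{st}$ are scalar, so the commutator side of (\ref{15}) in Theorem \ref{14} vanishes and forces $B_{\mathcal{I}}=U_{st}A_{\mathcal{I}}U_{st}^{\ast}$; sufficiency amounts to taking $X=0$, which is your $U=U_{st}+0\cdot\epsilon$. Your remark that a normal matrix with a repeated eigenvalue is literally $\lambda I_2$ is slightly cleaner than the paper's ``up to unitary similarity'' reduction.
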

\begin{proof}
Since the two eigenvalues of $A_{\rm st}$ are equal, we may assume,
up to unitary similarity, that $A_{\rm st}=a_1I_2$. First we prove the necessity.

Suppose that $A$ is unitarily similar to $B$. By Theorem \ref{14},
\begin{enumerate}
\item There exists a unitary matrix $U_{st}$ in $\mathbb{C}^{2\times 2}$ such that
\begin{equation}\label{p}
U_{st}A_{st}U_{st}^{\ast}=B_{st}.
\end{equation}
Notice that $A_{st}$ is a scalar matrix, hence
$$
B_{st}=U_{st}A_{st}U_{st}^{\ast}=A_{st}U_{st}U_{st}^{\ast}=A_{st}.
$$
In fact, for any unitary matrix $U_{st}\in\mathbb{C}^{2\times 2}$, we always have $B_{st}=A_{st}$.
\item For the same unitary matrix $U_{st}$, 
 the matrix equation (\ref{15}) has a skew-Hermitian solution $X\in\mathbb{C}^{2\times 2}$. It follows from the form of $B_{st}$ that
    $$
    0_{2}=B_{st}X-XB_{st}=B_{\mathcal{I}}-U_{st}A_{\mathcal{I}}U_{st}^{\ast}.
    $$
    Therefore,
\begin{equation}\label{pp}
U_{st}A_{\mathcal{I}}U_{st}^{\ast}=B_{\mathcal{I}}.
\end{equation}
That is, the ordered pairs $\{A_{st},A_{\mathcal{I}}\}$ and $\{B_{st},B_{\mathcal{I}}\}$ are simultaneously unitarily similar.
\end{enumerate}

Now we prove the sufficiency. Assume that the unitary matrix $U_{st}\in\mathbb{C}^{2\times 2}$ satisfies both (\ref{p}) and (\ref{pp}). By (\ref{p}) and the form of $A_{st}$, we can deduce that $B_{st}=A_{st}$ is also scalar. Thus the left side of equation (\ref{15}) is a zero matrix for any $X\in\mathbb{C}^{2\times 2}$, which is equal to the right side of (\ref{15}). This shows that (\ref{15}) has a skew-Hermitian solution $X\in\mathbb{C}^{2\times 2}$. Finally, by Theorem \ref{14}, $A$ is unitarily similar to $B$.
\end{proof}

It is worth noting that if $A=A_{st}+A_{\mathcal{I}}\epsilon,B=B_{st}+B_{\mathcal{I}}\epsilon\in\mathbb{DC}^{2\times 2}$ and $A_{st}$ is normal, the eigenvalues of $A_{st}$ are equal, then
$$
\mathrm{tr}(A,A^{2},A^{\ast}A)=\mathrm{tr}(B,B^{2},B^{\ast}B)
$$
is not enough to guarantee the unitary similarity between $A$ and $B$. In fact, it is possible that even if all the trace words of $A$ and $B$ are equal, they may still not be unitarily similar.

\begin{example}
Let $A=\begin{pmatrix}
1&0\\
0&1
\end{pmatrix}+\begin{pmatrix}
1&x\\
y&1
\end{pmatrix}\epsilon
$,                    $B=\begin{pmatrix}
1&0\\
0&1
\end{pmatrix}+\begin{pmatrix}
1&0\\
0&1
\end{pmatrix}\epsilon\in\mathbb{DC}^{2\times 2}$, where $x,y\in\mathbb{C}$ are not both zero. 
For any binary word $W(a, b)$, where $a$ appears $m$ times and $b$ appears $n$ times, one can see that
\[
W(A,A^{\ast})=\begin{pmatrix}
	1&0\\
	0&1
\end{pmatrix}+\begin{pmatrix}
	m+n&mx+n\bar{x}\\
	my+n\bar{y}&m+n
\end{pmatrix}\epsilon \ \ \text{and} \ \ 
W(B,B^{\ast})=\begin{pmatrix}
	1&0\\
	0&1
\end{pmatrix}+\begin{pmatrix}
	m+n&0\\
	0&m+n
\end{pmatrix}\epsilon.
\]
Consequently,
\[
\mathrm{Tr}(W(A,A^{\ast}))=2+2(m+n)\epsilon=\mathrm{Tr}(W(B,B^{\ast})).
\]
Therefore, all the trace words of $A$ and $B$ are equal.
However, $A$ is not unitarily similar to $B$. Since $A_{st}=B_{st}$ is scalar and $A_{\mathcal{I}}$ is not unitarily similar to $B_{\mathcal{I}}$, it follows from Proposition \ref{j} that $A$ and $B$ are not unitarily similar.
\end{example}
\begin{remark}\label{55}
From the proof of Theorem \ref{ppp} and Proposition \ref{j} we identify the obstruction in the case where $A_{st}$ is normal and has two distinct eigenvalues. In this case, we have $a_{2}=0$ and $a_{1}\neq a_{4}$. Further, by the trace identities (\ref{8}) and (\ref{10}), we obtain $b_{1}=d_{1}$ and $b_{4}=d_{4}$. Let $U_{st}\in\mathbb{C}^{2\times 2}$ be the unitary matrix such that
$$
U_{st}A_{st}U_{st}^{\ast}=B_{st}=\begin{pmatrix}
a_{1}&0\\
0&a_{4}
\end{pmatrix}=A_{st}.
$$
We deduce that $U_{st}$ must be of the form
$$
U_{st}=\begin{pmatrix}
e^{\boldsymbol{i}\theta}&0\\
0&e^{\boldsymbol{i}\beta}
\end{pmatrix},\ \text{for} \ 0<\theta,\beta\leq 2\pi.
$$

Let
\[
X=
\begin{pmatrix}
x_1&x_2\\
x_3&x_4
\end{pmatrix}
\in\mathbb{C}^{2\times2}.
\]
By Theorem \ref{14}, it remains to determine whether the equation
\[
B_{st}X-XB_{st}
=
B_{\mathcal{I}}-U_{st}A_{\mathcal{I}}U_{st}^*
\]
has a skew-Hermitian solution. Since
\[
U_{st}=
\begin{pmatrix}
e^{\boldsymbol{i}\theta}&0\\
0&e^{\boldsymbol{i}\beta}
\end{pmatrix},
\]
we have
\[
B_{\mathcal{I}}-U_{st}A_{\mathcal{I}}U_{st}^*
=
\begin{pmatrix}
d_1-b_1&
d_2-e^{\boldsymbol{i}(\theta-\beta)}b_2\\
d_3-e^{\boldsymbol{i}(\beta-\theta)}b_3&
d_4-b_4
\end{pmatrix}.
\]
By the trace identities (\ref{8}) and (\ref{10}),
\[
d_1=b_1,\qquad d_4=b_4.
\]
On the other hand,
\[
B_{st}X-XB_{st}
=
\begin{pmatrix}
0&(a_1-a_4)x_2\\
(a_4-a_1)x_3&0
\end{pmatrix}.
\]
Therefore, the equation is equivalent to
\begin{align*}
(a_1-a_4)x_2
&=d_2-e^{\boldsymbol{i}(\theta-\beta)}b_2,\\
(a_4-a_1)x_3
&=d_3-e^{\boldsymbol{i}(\beta-\theta)}b_3.
\end{align*}

Since $X$ is required to be skew-Hermitian, we must have
\[
x_3=-\overline{x_2}.
\]
Consequently, the above equation has a skew-Hermitian solution if
and only if
\begin{equation}\label{4.25}
\frac{d_3-e^{\boldsymbol{i}(\beta-\theta)}b_3}{a_1-a_4}
=
\frac{
\overline{d_2-e^{\boldsymbol{i}(\theta-\beta)}b_2}
}{
\overline{a_1-a_4}
}.                                                          
\end{equation}
Indeed, when (\ref{4.25}) holds, we may take
\[
x_2=
\frac{d_2-e^{\boldsymbol{i}(\theta-\beta)}b_2}{a_1-a_4},
\qquad
x_3=-\overline{x_2},
\qquad
x_1=x_4=0.
\]
Then $X$ is skew-Hermitian and satisfies the required matrix
equation.

Nevertheless, in the situation under consideration, condition
(\ref{4.25}) does not always hold. Hence
\[
\operatorname{tr}(A,A^2,A^*A)
=
\operatorname{tr}(B,B^2,B^*B)
\]
is not sufficient to guarantee that $A$ and $B$ are unitarily similar.

It is worth noting that if
\begin{equation}\label{4.26}
a_4=\overline{a_1},\qquad
b_3=-\overline{b_2},\qquad
d_3=-\overline{d_2},                                     
\end{equation}
then condition (\ref{4.25}) holds automatically. In fact, if
\[
C_{12}=d_2-e^{\boldsymbol{i}(\theta-\beta)}b_2,
\]
then
\[
d_3-e^{\boldsymbol{i}(\beta-\theta)}b_3=-\overline{C_{12}},
\qquad
\overline{a_1-a_4}=-(a_1-a_4).
\]
Thus the two sides of (\ref{4.25}) are equal. In particular, choosing
\[
e^{\boldsymbol{i}\beta}=e^{-\boldsymbol{i}\theta}
\]
makes $U_{st}$ have complex adjoint form.

\end{remark}

For Remark \ref{55}, we give a concrete example.
\begin{example}
Assume $A=\begin{pmatrix}
\boldsymbol{i}&0\\
0&-\boldsymbol{i}
\end{pmatrix}+\begin{pmatrix}
1&1\\
1&1
\end{pmatrix}\epsilon
$,                    $B=\begin{pmatrix}
\boldsymbol{i}&0\\
0&-\boldsymbol{i}
\end{pmatrix}+\begin{pmatrix}
1&1\\
2&1
\end{pmatrix}\epsilon\in\mathbb{DC}^{2\times 2}$. Then $\mathrm{tr}(A)=\mathrm{tr}(B)=2\epsilon$ and
$$
A^{2}=B^{2}=\begin{pmatrix}
-1&0\\
0&-1
\end{pmatrix}+\begin{pmatrix}
2\boldsymbol{i}&0\\
0&-2\boldsymbol{i}
\end{pmatrix}\epsilon,
$$
$$
 A^{\ast}A=\begin{pmatrix}
1&0\\
0&1
\end{pmatrix}+\begin{pmatrix}
0&-2\boldsymbol{i}\\
2\boldsymbol{i}&0
\end{pmatrix}\epsilon,\  B^{\ast}B=\begin{pmatrix}
1&0\\
0&1
\end{pmatrix}+\begin{pmatrix}
0&-3\boldsymbol{i}\\
3\boldsymbol{i}&0
\end{pmatrix}\epsilon.
$$
 Therefore,
$$
\mathrm{tr}(A^{2})=\mathrm{tr}(B^{2})=-2,\ \mathrm{tr}(A^{\ast}A)=\mathrm{tr}(B^{\ast}B)=2
$$
and
$$
\mathrm{tr}(A,A^{2},A^{\ast}A)=\mathrm{tr}(B,B^{2},B^{\ast}B).
$$

In order for \(A\) and \(B\) to be unitarily similar, it is
necessary that Equation~\eqref{4.25} holds. Let
\[
\phi=\theta-\beta.
\]
For the matrices in this example, we have
\[
a_1-a_4=2\boldsymbol{i},\qquad
d_2-e^{\boldsymbol{i}(\theta-\beta)}b_2=1-e^{\boldsymbol{i}\phi},
\]
and
\[
d_3-e^{\boldsymbol{i}(\beta-\theta)}b_3=2-e^{-\boldsymbol{i}\phi}.
\]
Therefore, Equation~\eqref{4.25} becomes
\[
\frac{2-e^{-\boldsymbol{i}\phi}}{2i}
=
\frac{\overline{1-e^{\boldsymbol{i}\phi}}}{\overline{2\boldsymbol{i}}}
=
\frac{1-e^{-\boldsymbol{i}\phi}}{-2\boldsymbol{i}}.
\]
Hence
\[
2-e^{-\boldsymbol{i}\phi}=-1+e^{-\boldsymbol{i}\phi},
\]
or equivalently,
\[
3=2e^{-\boldsymbol{i}\phi}.
\]
This is impossible because \(\lvert e^{-\boldsymbol{i}\phi}\rvert=1\).
Therefore, the equation system has no solution, which implies that
\(A\) and \(B\) cannot be unitarily similar.

In fact, all trace words of $A$ and $B$ coincide. Given any binary word $W(a, b)$, where $a$ appears $m$ times and $b$ appears $n$ times. Since $A_{st}=B_{st}$ are diagonal matrices and the diagonals of matrices $A_{\mathcal{I}}$ and $B_{\mathcal{I}}$ are the same, one can see that
$$
\mathrm{tr}\ W(A,A^{\ast})=\mathrm{tr}\ W(B,B^{\ast}).
$$
This example further demonstrates that equality of all trace words does not guarantee the unitary similarity of $A$ and $B$.
\end{example}

\section{Complex-adjoint criteria for unitary similarity in $\mathbb{DQ}^{n\times n}$}
In this section, we first establish the relation between unitary similarity
of arbitrary-order dual quaternion matrices and that of their dual complex
adjoint matrices.  The scalar Trace-word results are then obtained as
consequences.

We retain the following observation for the scalar real-standard-part case.

\begin{proposition}\label{99}
Suppose that $A=A_{st}+A_{\mathcal{I}}\epsilon,B=B_{st}+B_{\mathcal{I}}\epsilon\in\mathbb{DQ}$ and $A_{st}$ is real. Then $A$ is unitarily similar to $B$ if and only if
the ordered pairs $\{A_{st},A_{\mathcal{I}}\}$ and $\{B_{st},B_{\mathcal{I}}\}$ are simultaneously unitarily similar.
\end{proposition}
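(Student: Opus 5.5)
The plan is to follow the proof of Proposition \ref{j}, with the quaternion version of Theorem \ref{14} (the theorem immediately after the example in Section 3) playing the role of Theorem \ref{14}. The statement is for $A,B\in\mathbb{DQ}$, i.e. $1\times 1$ dual quaternion matrices, but the argument works verbatim for $A_{st}=aI_n$ with $a\in\mathbb{R}$. The one fact I need is that a real scalar commutes with every quaternion, hence with every quaternion matrix. Thus $A_{st}$ is central in $\mathbb{Q}^{n\times n}$.

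\textbf{Necessity.} Suppose $A\sim_u B$. By the quaternion version of Theorem \ref{14}, there are a unitary quaternion matrix $U_{st}$ with $U_{st}A_{st}U_{st}^{\ast}=B_{st}$, and a skew-Hermitian $X$ with
$$B_{st}X-XB_{st}=B_{\mathcal{I}}-U_{st}A_{\mathcal{I}}U_{st}^{\ast}.$$
Since $A_{st}$ is real scalar and $U_{st}U_{st}^{\ast}=I$, we get $B_{st}=U_{st}A_{st}U_{st}^{\ast}=A_{st}U_{st}U_{st}^{\ast}=A_{st}$. So $B_{st}$ is the same real scalar. Then $B_{st}X-XB_{st}=aX-Xa=0$, because real numbers commute with quaternions. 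Hence $U_{st}A_{\mathcal{I}}U_{st}^{\ast}=B_{\mathcal{I}}$, and the same $U_{st}$ carries both $A_{st}\mapsto B_{st}$ and $A_{\mathcal{I}}\mapsto B_{\mathcal{I}}$. This is the required simultaneous unitary similarity.

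\textbf{Sufficiency.} Let $U_{st}$ be a unitary matrix satisfying both $U_{st}A_{st}U_{st}^{\ast}=B_{st}$ and $U_{st}A_{\mathcal{I}}U_{st}^{\ast}=B_{\mathcal{I}}$. The simplest route is to take $U=U_{st}+0\cdot\epsilon$. It is unitary by Lemma \ref{fff}, since $U_{st}\cdot 0^{\ast}=0$ is skew-Hermitian. A direct expansion gives
$$UAU^{\ast}=U_{st}A_{st}U_{st}^{\ast}+U_{st}A_{\mathcal{I}}U_{st}^{\ast}\epsilon=B_{st}+B_{\mathcal{I}}\epsilon=B.$$
Equivalently, $X=0$ is a skew-Hermitian solution of the Sylvester equation in the quaternion version of Theorem \ref{14}, because its right-hand side vanishes.

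\textbf{Main obstacle.} The only delicate point is in the necessity step: centrality of $A_{st}$ requires it to be \emph{real}. A nonreal quaternion scalar $q$ does not commute with general unit quaternions $U_{st}$. In that case $B_{st}$ could differ from $A_{st}$, and the commutator $B_{st}X-XB_{st}$ need not vanish. I will therefore make the use of the realness hypothesis explicit exactly there. In the $1\times1$ reading, I will also note that "unitary" means a unit quaternion, so $U_{st}^{\ast}=U_{st}^{-1}$; no other step needs care.
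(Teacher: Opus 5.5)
Your proposal is correct and follows the paper's route. The paper proves this by analogy with Proposition \ref{j}, i.e.\ via the quaternion version of Theorem \ref{14}: since a real standard part is central, $B_{st}=A_{st}$, so the commutator $B_{st}X-XB_{st}$ vanishes and $U_{st}A_{\mathcal{I}}U_{st}^{\ast}=B_{\mathcal{I}}$ follows. Your sufficiency step with $U=U_{st}+0\cdot\epsilon$ is exactly what that criterion produces with $X=0$, and, as your computation shows, this direction does not even need the realness hypothesis.
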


\begin{proof}
The proof is analogous to that of Proposition \ref{j}.
\end{proof}

N. Wiegmann \cite{WN} proved that two $n\times n$ quaternion matrices are unitarily similar if and only if their complex adjoint matrices are unitarily similar. 

\begin{lemma}[\cite{WN}]\label{hu}
Assume that $A,B\in\mathbb{Q}^{n\times n}$. Then $A$ is unitarily similar to $B$ in $\mathbb{Q}^{n\times n}$ if and only if $A_{\mathbb{C}}$ is unitarily similar to $B_{\mathbb{C}}$ in $\mathbb{C}^{2n\times 2n}$.
\end{lemma}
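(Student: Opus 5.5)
Necessity is immediate from the multiplicativity of the complex adjoint. If $UAU^{\ast}=B$ with $U\in\mathbb{Q}^{n\times n}$ unitary, then properties (2) and (4) give
$$
U_{\mathbb{C}}A_{\mathbb{C}}U_{\mathbb{C}}^{\ast}=(UAU^{\ast})_{\mathbb{C}}=B_{\mathbb{C}},
$$
and property (6) says that $U_{\mathbb{C}}$ is unitary.

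For sufficiency, suppose $VA_{\mathbb{C}}V^{\ast}=B_{\mathbb{C}}$ for some unitary $V\in\mathbb{C}^{2n\times 2n}$. The aim is to replace $V$ by a unitary matrix that has complex adjoint form, in the spirit of the averaging in Lemma \ref{ha}. Let
$$
J=\begin{pmatrix}0&I_{n}\\-I_{n}&0\end{pmatrix}.
$$
A matrix $M$ has complex adjoint form exactly when $J\overline{M}J^{-1}=M$. Define $\sigma(M)=J\overline{M}J^{-1}$. This map is conjugate-linear, multiplicative, commutes with $\ast$, and satisfies $\sigma^{2}=\mathrm{id}$, because $J\overline{J}=J^{2}=-I$. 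Applying $\sigma$ to the relation $VA_{\mathbb{C}}V^{\ast}=B_{\mathbb{C}}$ shows that $\sigma(V)$ is also a unitary intertwiner. Hence the set
$$
\mathcal{S}=\{X\in\mathbb{C}^{2n\times 2n}: XA_{\mathbb{C}}=B_{\mathbb{C}}X,\ XA_{\mathbb{C}}^{\ast}=B_{\mathbb{C}}^{\ast}X\}
$$
is a real vector space that contains $V$ and is stable under $\sigma$.

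The main obstacle is that the average $(V+\sigma(V))/2$ need not be invertible, let alone unitary. I would handle this through the polar decomposition. First I pick an element $X\in\mathcal{S}$ with $\sigma(X)=X$ that is invertible. The $\sigma$-fixed part of $\mathcal{S}$ is the real form $\{Y+\sigma(Y)\}$, and $\mathcal{S}$ is its complexification, since $\mathcal{S}$ is a complex space and $\sigma(\boldsymbol{i}Y)=-\boldsymbol{i}\sigma(Y)$. The determinant is a polynomial on $\mathcal{S}$ that does not vanish at $V$. A complex polynomial that is identically zero on a real form vanishes on its complexification, so some fixed $X$ has $\det X\neq0$. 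Since both $X$ and $X^{\ast}$ intertwine the pair $(A_{\mathbb{C}},A_{\mathbb{C}}^{\ast})$ with $(B_{\mathbb{C}},B_{\mathbb{C}}^{\ast})$, the matrix $X^{\ast}X$ commutes with $A_{\mathbb{C}}$. Therefore its positive square root $P=(X^{\ast}X)^{1/2}$ commutes with $A_{\mathbb{C}}$ as well, because $P$ is a polynomial in $X^{\ast}X$.

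Set $W=XP^{-1}$. Then $W$ is unitary and satisfies $WA_{\mathbb{C}}W^{\ast}=B_{\mathbb{C}}$. Because $\sigma$ preserves positivity and square roots are unique, $\sigma(P)=P$, and so $\sigma(W)=W$. Hence $W=U_{\mathbb{C}}$ for a unique quaternion matrix $U$. By property (6) this $U$ is unitary, and by injectivity of $A\mapsto A_{\mathbb{C}}$ we get $UAU^{\ast}=B$. For the dual versions I would run the same argument, with the standard parts handled as above and the infinitesimal part corrected using Theorem \ref{14} and Lemma \ref{ha}.
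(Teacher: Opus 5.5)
Your argument is correct and follows the paper's own route: it is the proof of Lemma \ref{lem:replacement} for the single pair $(A_{\mathbb{C}},B_{\mathbb{C}})$, combined with the fixed-point description \eqref{eq:fixed-adjoint}, and the only cosmetic difference is that you get an invertible $\Theta$-fixed intertwiner from the whole real form of $\mathcal{S}$ rather than from the one-parameter family $R+tS$. Your closing remark about the dual case is not justified by Theorem \ref{14} and Lemma \ref{ha} alone, because a $\Theta$-fixed standard-part unitary need not be compatible with the infinitesimal parts (compare the example following Theorem \ref{14}); this is why Theorem \ref{thm:arbitrary-order} first projects onto $\mathcal{E}_C$ and only then reapplies Lemma \ref{lem:replacement}.
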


For the proof in the dual setting, we need a matrix form of the symmetry
that characterizes complex adjoint matrices. Put
\[
 J=\begin{pmatrix}0&I_n\\-I_n&0\end{pmatrix},
 \qquad
 \Theta(X)=J\overline XJ^{-1}
 \quad (X\in\mathbb{C}^{2n\times2n}).
\]
The map $\Theta$ is real-linear, conjugate-linear over $\mathbb{C}$,
multiplicative, adjoint-preserving, and satisfies
$\Theta^2=\operatorname{id}$. A direct
comparison of the four $n\times n$ blocks gives
\begin{equation}\label{eq:fixed-adjoint}
 Fix(\Theta)
 =\{X\in\mathbb{C}^{2n\times2n}:\Theta(X)=X\}
 =\{A_{\mathbb C}:A\in\mathbb{Q}^{n\times n}\}.
\end{equation}
In particular, a unitary matrix fixed by $\Theta$ is the complex adjoint of
a quaternion unitary matrix. We extend $\Theta$ coefficientwise to dual
complex matrices.

The next lemma is the fixed-point formulation of the simultaneous extension
recorded by Wiegmann \cite[Section~3]{WN}. We include a direct
proof because the construction will also be used in the dual setting.

\begin{lemma}\label{lem:replacement}
Let $C_1,\ldots,C_s,D_1,\ldots,D_s\in Fix(\Theta)$. If a complex unitary
matrix $V$ satisfies
\[
 D_r=VC_rV^*,\qquad r=1,\ldots,s,
\]
then there is a complex unitary matrix $Q$, fixed by $\Theta$, satisfying
\[
 D_r=QC_rQ^*,\qquad r=1,\ldots,s.
\]
\end{lemma}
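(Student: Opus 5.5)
The plan is to pass from $V$ to a $\Theta$-fixed unitary by studying the set of all unitaries that intertwine the families, and to extract a fixed point by a polar-decomposition argument.

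\textbf{Step 1: a $\Theta$-invariant linear space.} Put
\[
\mathcal{S}=\{Y\in\mathbb{C}^{2n\times 2n}: YC_r=D_rY \text{ and } YC_r^*=D_r^*Y,\ r=1,\ldots,s\}.
\]
This is a complex linear space, and $V\in\mathcal{S}$ since $VC_rV^*=D_r$ gives $VC_r^*V^*=D_r^*$ as well. Because $\Theta$ is multiplicative, adjoint-preserving and fixes every $C_r,D_r$, we get $\Theta(\mathcal{S})=\mathcal{S}$. Moreover $Y\in\mathcal{S}$ implies $Y^*\in\mathcal{S}^{\dagger}$, the analogous space with the roles of $C$ and $D$ swapped. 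Consequently $Y^*Y$ commutes with every $C_r$ and $C_r^*$, and $YY^*$ commutes with every $D_r$ and $D_r^*$.

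\textbf{Step 2: an invertible $\Theta$-fixed element.} The real subspace $\mathcal{S}_{\mathbb{R}}=\mathcal{S}\cap Fix(\Theta)$ satisfies $\mathcal{S}=\mathcal{S}_{\mathbb{R}}\oplus \mathrm{i}\,\mathcal{S}_{\mathbb{R}}$. Indeed, $\Theta$ is conjugate-linear and involutive, so $Y=\frac{Y+\Theta Y}{2}+\mathrm{i}\,\frac{Y-\Theta Y}{2\mathrm{i}}$ with both parts fixed. Write $V=Y_1+\mathrm{i}Y_2$ with $Y_1,Y_2\in\mathcal{S}_{\mathbb{R}}$. The polynomial $p(t)=\det(Y_1+tY_2)$ is not identically zero, since $p(\mathrm{i})=\det V\neq 0$. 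Hence there is a real $t_0$ with $p(t_0)\neq0$, and then $Y=Y_1+t_0Y_2\in Fix(\Theta)$ is invertible and lies in $\mathcal{S}$.

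\textbf{Step 3: polar decomposition.} Let $P=(Y^*Y)^{1/2}$ and $Q=YP^{-1}$, so $Q$ is unitary. By Step 1, $Y^*Y$ commutes with all $C_r,C_r^*$. Its positive square root $P$ is a polynomial in $Y^*Y$, so $P$, and hence $P^{-1}$, also commutes with them. Therefore
\[
QC_rQ^*=YP^{-1}C_rP^{-1}Y^*=YC_r(Y^*Y)^{-1}Y^*=D_rYY^{-1}(Y^*)^{-1}Y^*=D_r.
\]
For the fixed-point property, $Y^*Y\in Fix(\Theta)$, and $\Theta$ preserves positivity and commutes with the functional calculus. Indeed $\Theta(P)$ is positive, since $\Theta(X)$ is unitarily similar to $\overline{X}$, and $\Theta(P)^2=Y^*Y$. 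Uniqueness of the positive square root then gives $\Theta(P)=P$, and so $\Theta(Q)=Q$.

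I expect the main obstacle to be Step 2. Producing a $\Theta$-fixed element of $\mathcal{S}$ that is also invertible is the point where the determinant-polynomial trick is essential, and it relies on checking carefully that $\mathcal{S}$ is the complexification of its fixed part. The other steps are routine verifications.
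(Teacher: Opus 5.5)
Your proposal is correct and follows essentially the same route as the paper. Both arguments average $V$ with $\Theta(V)$ to get two $\Theta$-fixed matrices intertwining both $C_r\mapsto D_r$ and $C_r^*\mapsto D_r^*$, use $\det(R+zS)$ with $p(\mathrm{i})=\det V\neq 0$ to find an invertible real combination, and take the unitary polar factor, whose $\Theta$-invariance follows from uniqueness of the positive square root; collecting the intertwiners in the $\Theta$-stable space $\mathcal{S}$ is only a cosmetic repackaging of the paper's proof.
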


\begin{proof}
The given equations are equivalent to $VC_r=D_rV$. Applying $\Theta$ and
using $\Theta(C_r)=C_r$ and $\Theta(D_r)=D_r$ shows that $\Theta(V)$ obeys
the same equations. Define
\[
 R=\frac{V+\Theta(V)}2,
 \qquad
 S=\frac{V-\Theta(V)}{2\boldsymbol{i}}.
\]
Because $\Theta$ is conjugate-linear, both $R$ and $S$ are fixed by
$\Theta$. They simultaneously intertwine all $C_r,D_r$. They also
intertwine the adjoint pairs. Indeed, taking adjoints in
$D_r=VC_rV^*$ gives $D_r^*=VC_r^*V^*$, and hence
$VC_r^*=D_r^*V$. Since $C_r^*$ and $D_r^*$ are fixed by $\Theta$, the
same averaging argument gives
\[
 RC_r^*=D_r^*R,\qquad SC_r^*=D_r^*S.
\]

Regard
\[
 p(z)=\det(R+zS),\qquad z\in\mathbb{C},
\]
as a complex polynomial. Since
$p(\boldsymbol{i})=\det(R+\boldsymbol{i}S)=\det V\ne0$, it is not identically zero. Consequently,
it has only finitely many real zeros, so some $t_0\in\mathbb{R}$ makes
$P=R+t_0S$ invertible. Then $\Theta(P)=P$ and
\[
 PC_r=D_rP,\qquad PC_r^*=D_r^*P.
\]
Taking the adjoint of the second equality and using the first one yields
\[
 P^*PC_r=P^*D_rP=C_rP^*P.
\]
Let $H=(P^*P)^{1/2}$ be the unique positive definite square root
\cite{HR}. Then $H$ and $H^{-1}$ commute with every $C_r$.
Moreover, $\Theta(P^*P)=P^*P$. Since $J$ is unitary, $\Theta$ maps
positive definite matrices to positive definite matrices, and
\[
 \Theta(H)^2=\Theta(H^2)=\Theta(P^*P)=P^*P.
\]
Thus $H$ and $\Theta(H)$ are both positive definite square roots of
$P^*P$. Uniqueness of the positive definite square root gives
$\Theta(H)=H$. Therefore
\[
 Q=PH^{-1}
\]
is fixed by $\Theta$. Since $H$ is positive definite, it is Hermitian and
invertible. Hence
\[
 Q^*Q=(PH^{-1})^*(PH^{-1})
      =H^{-1}P^*PH^{-1}
      =H^{-1}H^2H^{-1}=I_{2n}.
\]
Thus $Q$ is unitary. Moreover, it satisfies $QC_r=D_rQ$ for every $r$.
Consequently, $D_r=QC_rQ^*$.
\end{proof}

We next isolate the part of a dual coefficient that can be altered by a
first-order unitary correction. Write $d=2n$, regard
$\mathbb{C}^{d\times d}$ as a real vector space, and use the real Frobenius inner
product
\begin{equation}\label{eq:real-inner-product}
 \langle X,Y\rangle_{\mathbb{R}}=Re~ tr(X^*Y).
\end{equation}
This real inner product was also used by Wu et al.~\cite{WuLiZhangDuan2011}
to formulate orthogonality relations and to prove finite-step convergence,
in exact arithmetic, of an iterative method for coupled
Sylvester--conjugate matrix equations.
It is positive definite because
$\langle X,X\rangle_{\mathbb{R}}=\sum_{r,s}|x_{rs}|^2$. Put
\[
 \mathcal{K}_d=\{K\in\mathbb{C}^{d\times d}:K^*=-K\},\ \mathcal{R}_C=\{KC-CK:K\in\mathcal{K}_d\},\ \mathcal{E}_C=\mathcal{R}_C^\perp.
\]
All these are real vector spaces, and finite-dimensional orthogonal
decomposition gives
\begin{equation}\label{eq:decomposition}
 \mathbb{C}^{d\times d}=\mathcal{R}_C\oplus\mathcal{E}_C.
\end{equation}
Let $\pi_C$ be the orthogonal projection onto $\mathcal{E}_C$.

\begin{lemma}\label{lem:projection}
Assume $\Theta(C)=C$. Then the following statements hold.
\begin{enumerate}
\item[(i)] Both $\mathcal{R}_C$ and $\mathcal{E}_C$ are preserved by $\Theta$, and
\[
 \pi_C(\Theta X)=\Theta(\pi_CX).
\]
\item[(ii)]  If $g\in \mathbb{C}^{d\times d}$ is unitary and $gCg^*=C$, then conjugation by $g$ preserves
$\mathcal{R}_C$ and $\mathcal{E}_C$, and for any $X\in \mathbb{C}^{d\times d}$, we have
\[
 \pi_C(gXg^*)=g\pi_C(X)g^*.
\]
\item[(iii)]  If $D\in\mathcal{R}_C$ and $\Theta(D)=D$, then there is a matrix $K_0$
such that
\[
 K_0^*=-K_0,\qquad \Theta(K_0)=K_0,\qquad D=K_0C-CK_0.
\]
\end{enumerate}
\end{lemma}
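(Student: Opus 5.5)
For (i), the plan is to first check that $\Theta$ is a real-linear isometry for $\langle\cdot,\cdot\rangle_{\mathbb R}$. Since $J$ is unitary,
\[
\operatorname{tr}\bigl(\Theta(X)^*\Theta(Y)\bigr)=\operatorname{tr}\bigl(J\,\overline{X}^{\,*}\,\overline{Y}J^{-1}\bigr)=\overline{\operatorname{tr}(X^*Y)},
\]
so taking real parts gives $\langle\Theta X,\Theta Y\rangle_{\mathbb R}=\langle X,Y\rangle_{\mathbb R}$. Next, $\Theta$ preserves $\mathcal K_d$, because $\Theta$ is adjoint-preserving and real-linear. It is also multiplicative and fixes $C$, so $\Theta(KC-CK)=\Theta(K)C-C\Theta(K)$, and therefore $\Theta(\mathcal R_C)\subseteq\mathcal R_C$. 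Because $\Theta$ is an isometric involution, it then preserves the orthogonal complement $\mathcal E_C$ as well. Finally, write $X=r+e$ with $r\in\mathcal R_C$ and $e\in\mathcal E_C$. Then $\Theta X=\Theta r+\Theta e$ is the decomposition of $\Theta X$, which gives $\pi_C(\Theta X)=\Theta(\pi_C X)$.

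For (ii), conjugation $X\mapsto gXg^*$ by a unitary $g$ is likewise a real-linear isometry of $\langle\cdot,\cdot\rangle_{\mathbb R}$, and it preserves $\mathcal K_d$. Using $gCg^*=C$ we get
\[
g(KC-CK)g^*=(gKg^*)C-C(gKg^*),
\]
so $\mathcal R_C$ is mapped into itself. Since the map is invertible with inverse given by conjugation by $g^*$ (which also fixes $C$), it maps $\mathcal R_C$ onto itself. It therefore preserves $\mathcal E_C$, and the projection identity follows exactly as in (i).

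For (iii), pick any $K\in\mathcal K_d$ with $D=KC-CK$. Apply $\Theta$ and use $\Theta(D)=D$ and $\Theta(C)=C$ to get $D=\Theta(K)C-C\Theta(K)$, with $\Theta(K)\in\mathcal K_d$. Now set $K_0=\tfrac12\bigl(K+\Theta(K)\bigr)$.
\begin{itemize}
\item $K_0$ is skew-Hermitian, since $\mathcal K_d$ is a real subspace.
\item $K_0$ is fixed by $\Theta$, since $\Theta^2=\mathrm{id}$ and $\Theta$ is real-linear.
\item The map $K\mapsto KC-CK$ is real-linear, so $K_0C-CK_0=\tfrac12(D+D)=D$.
\end{itemize}
Real-linearity is exactly what makes this averaging legitimate here; the conjugate-linearity of $\Theta$ only matters in that we use the real $\tfrac12$ and never a complex scalar.

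I expect the main point needing care to be the isometry claim in (i), specifically the identity $\overline{X}^{\,*}=\overline{X^*}$ combined with $J^*=J^{-1}$. This computation is routine, and everything else is formal.
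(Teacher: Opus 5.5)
Your proposal is correct and follows the paper's proof essentially step by step. In (i), (ii) and (iii) alike, you prove that the map ($\Theta$ or unitary conjugation by $g$) is a real isometry preserving $\mathcal R_C$, deduce that it preserves $\mathcal E_C$ and commutes with $\pi_C$ by uniqueness of the orthogonal decomposition, and then average $K_0=\tfrac12(K+\Theta(K))$. The only cosmetic difference is that you check the isometry directly from $\Theta(X)^*=J\overline{X}^{\,*}J^{-1}$, whereas the paper routes it through multiplicativity, adjoint-preservation and $\operatorname{tr}(\Theta(Z))=\overline{\operatorname{tr}(Z)}$.
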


\begin{proof}
If $K^*=-K$, then $\Theta(K)^*=-\Theta(K)$ and
\[
 \Theta(KC-CK)=\Theta(K)C-C\Theta(K).
\]
Thus $\Theta$ maps $\mathcal{R}_C$ onto itself. It is an isometry for
(\ref{eq:real-inner-product}); we give the details. For every
$Z\in\mathbb{C}^{d\times d}$, cyclic invariance of the trace gives
\[
 tr(\Theta(Z))
 =tr(J\overline ZJ^{-1})
 =tr(\overline Z)
 =\overline{tr(Z)}.
\]
Since $\Theta$ is multiplicative and adjoint-preserving, for all
$X,Y\in\mathbb{C}^{d\times d}$ we obtain
\begin{align*}
 \langle\Theta(X),\Theta(Y)\rangle_{\mathbb{R}}
 &=Re~ tr\bigl(\Theta(X)^*\Theta(Y)\bigr)\\
 &=Re~ tr\bigl(\Theta(X^*Y)\bigr)\\
 &=Re~\overline{tr(X^*Y)}
  =\langle X,Y\rangle_{\mathbb{R}}.
\end{align*}
Thus $\Theta$ is a real-linear isometry. To see explicitly that it preserves
$\mathcal{E}_C=\mathcal{R}_C^\perp$, let $X\in\mathcal{E}_C$ and $R\in\mathcal{R}_C$. Since
$\Theta(\mathcal{R}_C)=\mathcal{R}_C$ and $\Theta^2=\operatorname{id}$, there is
$R_0\in\mathcal{R}_C$ such that $R=\Theta(R_0)$. Hence
\[
 \langle\Theta(X),R\rangle_{\mathbb{R}}
 =\langle\Theta(X),\Theta(R_0)\rangle_{\mathbb{R}}
 =\langle X,R_0\rangle_{\mathbb{R}}=0.
\]
Therefore $\Theta(X)\in\mathcal{E}_C$. Applying $\Theta$ once more shows
$\Theta(\mathcal{E}_C)=\mathcal{E}_C$. Finally, if
$X=X_R+X_E$ is the decomposition in (\ref{eq:decomposition}), then
\[
 \Theta(X)=\Theta(X_R)+\Theta(X_E),
 \qquad
 \Theta(X_R)\in\mathcal{R}_C,\qquad \Theta(X_E)\in\mathcal{E}_C.
\]
Uniqueness of the orthogonal decomposition yields
$\pi_C(\Theta X)=\Theta(\pi_CX)$ and proves (i).

If $gCg^*=C$, then
\[
 g(KC-CK)g^*=(gKg^*)C-C(gKg^*),
\]
and $gKg^*$ is skew-Hermitian. Hence the map
$T_g(X)=gXg^*$ maps $\mathcal{R}_C$ onto itself; its inverse is $T_{g^*}$.
Moreover, unitary conjugation is an isometry for
(\ref{eq:real-inner-product}), because
\begin{align*}
 \langle T_g(X),T_g(Y)\rangle_{\mathbb{R}}
 &=Re~ tr\bigl((gXg^*)^*gYg^*\bigr)\\
 &=Re~ tr\bigl(gX^*Yg^*\bigr)\\
 &=Re~ tr(X^*Y)
  =\langle X,Y\rangle_{\mathbb{R}}.
\end{align*}
If $X\in\mathcal{E}_C$ and $R\in\mathcal{R}_C$, write $R=T_g(R_0)$ with
$R_0\in\mathcal{R}_C$. Then
\[
 \langle T_g(X),R\rangle_{\mathbb{R}}
 =\langle T_g(X),T_g(R_0)\rangle_{\mathbb{R}}
 =\langle X,R_0\rangle_{\mathbb{R}}=0.
\]
Thus $T_g(\mathcal{E}_C)=\mathcal{E}_C$. Applying $T_g$ to the decomposition
$X=X_R+X_E$ and using its uniqueness gives
\[
 \pi_C(gXg^*)=g\pi_C(X)g^*,
\]
which proves (ii).

For (iii), write $D=KC-CK$ with $K^*=-K$. Then
\[
 K_0=\frac{K+\Theta(K)}2
\]
is skew-Hermitian and fixed by $\Theta$, while
\[
 K_0C-CK_0
 =\frac12\bigl(D+\Theta(D)\bigr)=D.
\]
\end{proof}

We can now extend the connection in Lemma~\ref{hu} to dual
quaternion matrices of every order.

\begin{theorem}[Complex adjoint criterion in arbitrary order]\label{thm:arbitrary-order}
Let $A,B\in\mathbb{DQ}^{n\times n}$, where $n\ge1$. Then
\begin{equation}\label{eq:main-equivalence}
 A\sim_u B\ \text{in }\mathbb{DQ}^{n\times n}
 \quad\Longleftrightarrow\quad
 A_{\mathbb C}\sim_u B_{\mathbb C}\ \text{in }\mathbb{DC}^{2n\times2n}.
\end{equation}
More precisely, if the equivalence on the right is realized by a dual complex unitary matrix, then it is also realized by a
possibly different dual complex unitary matrix fixed by $\Theta$; equivalently, this matrix has the form $W_{\mathbb C}$
for a dual quaternion unitary matrix $W$.
\end{theorem}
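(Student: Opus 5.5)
The forward implication is immediate. If $W$ is a dual quaternion unitary with $WAW^*=B$, then properties (2) and (4) of the dual complex adjoint give $W_{\mathbb C}A_{\mathbb C}W_{\mathbb C}^*=B_{\mathbb C}$, and $W_{\mathbb C}$ is unitary by (6). For the converse, the plan is to take an arbitrary dual complex unitary $U=U_{st}+U_{\mathcal I}\epsilon$ with $UA_{\mathbb C}U^*=B_{\mathbb C}$ and replace it, in two stages, by one fixed by $\Theta$. By \eqref{eq:fixed-adjoint} and Lemma \ref{fff}, such a matrix is $W_{\mathbb C}$ for a dual quaternion unitary $W$, and then $WAW^*=B$ follows by injectivity of $X\mapsto X_{\mathbb C}$.

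Write $C=(A_{st})_{\mathbb C}$, $C'=(A_{\mathcal I})_{\mathbb C}$, $D=(B_{st})_{\mathbb C}$ and $D'=(B_{\mathcal I})_{\mathbb C}$; all four are fixed by $\Theta$. The standard part gives $U_{st}CU_{st}^*=D$. Lemma \ref{lem:replacement} with $s=1$ then yields a $\Theta$-fixed unitary $Q$ with $QCQ^*=D$. Replacing $B$ by the quaternion matrix whose adjoint is $Q^*B_{\mathbb C}Q$, we may assume $D=C$. Then $g:=Q^*U_{st}$ is a unitary satisfying $gCg^*=C$. This $g$ need not be $\Theta$-fixed, and removing it is the crux. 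By Theorem \ref{14}, as in its proof, the dual parts satisfy
\[
D'-gC'g^*=CK-KC
\]
for some skew-Hermitian $K$. Hence $D'-gC'g^*\in\mathcal R_C$, and so $\pi_C(D')=\pi_C(gC'g^*)=g\,\pi_C(C')\,g^*$ by Lemma \ref{lem:projection}(ii).

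Now set $E=\pi_C(C')$ and $E'=\pi_C(D')$. Both are $\Theta$-fixed by Lemma \ref{lem:projection}(i). The unitary $g$ satisfies simultaneously $gCg^*=C$ and $gEg^*=E'$. Apply Lemma \ref{lem:replacement} with $s=2$ to the pairs $(C,C)$ and $(E,E')$. This produces a $\Theta$-fixed unitary $h$ with $hCh^*=C$ and $hEh^*=E'$. Then
\[
\pi_C(D'-hC'h^*)=E'-hEh^*=0,
\]
so $D'-hC'h^*\in\mathcal R_C$, and this difference is $\Theta$-fixed. Lemma \ref{lem:projection}(iii) gives a skew-Hermitian, $\Theta$-fixed $K_0$ with
\[
D'-hC'h^*=K_0C-CK_0 .
\]
Then $X=-K_0$ solves \eqref{15} with standard unitary $h$, once the sign convention of Theorem \ref{14} is matched.

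The sufficiency construction of Theorem \ref{14} then produces $V=h+X^*h\,\epsilon$. This $V$ is unitary, it is $\Theta$-fixed because $\Theta$ is multiplicative and adjoint-preserving, and it satisfies $VA_{\mathbb C}V^*=(\text{new }B)_{\mathbb C}$. Composing with $Q$ gives the $\Theta$-fixed dual unitary $QV$, which realizes $A_{\mathbb C}\sim_u B_{\mathbb C}$.

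The main obstacle is the step replacing $g$ by a $\Theta$-fixed $h$ without destroying solvability of the dual-part equation. Naive averaging of $U_{\mathcal I}$ fails because the correction term depends on $g$ nonlinearly. The projection $\pi_C$ resolves this: it reduces the dual constraint to an intertwining relation $gEg^*=E'$ to which Lemma \ref{lem:replacement} applies. The remaining checks should be routine but need care: the equivariance identities, that $D'-gC'g^*$ really lies in $\mathcal R_C$ with the correct sign, and the $\Theta$-invariance of $V$.
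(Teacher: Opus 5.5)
Your proposal is correct and follows the paper's proof essentially step for step. You align the standard parts with a $\Theta$-fixed unitary (your $Q$, the paper's $S$), project with $\pi_C$ to turn the dual constraint into the intertwining relation $gEg^*=E'$, apply Lemma~\ref{lem:replacement} a second time to obtain a $\Theta$-fixed $h$, and use Lemma~\ref{lem:projection}(iii) to get the $\Theta$-fixed $K_0$, so that your $V=h+X^*h\epsilon=(I_{2n}+K_0\epsilon)h$ is exactly the paper's $\widehat T$. The only differences are cosmetic: you conjugate $B$ back to $C=(A_{st})_{\mathbb C}$ instead of conjugating $A$ forward to $D_0$, and you invoke the sufficiency part of Theorem~\ref{14} instead of checking the first-order conjugation formula directly.
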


\begin{proof}
Write
\[
 A=A_{st}+A_{\mathcal{I}}\epsilon,
 \qquad
 B=B_{st}+B_{\mathcal{I}}\epsilon,
\]
and abbreviate
\[
 C_0=(A_{st})_{\mathbb C},\quad C_1=(A_{\mathcal{I}})_{\mathbb C},
 \qquad
 D_0=(B_{st})_{\mathbb C},\quad D_1=(B_{\mathcal{I}})_{\mathbb C}.
\]
All four matrices are fixed by $\Theta$.

If $B=WAW^*$ for a dual quaternion unitary matrix $W$, then the algebraic
properties of the dual complex adjoint give
$B_{\mathbb C}=W_{\mathbb C}A_{\mathbb C}W_{\mathbb C}^*$, and
$W_{\mathbb C}$ is dual complex unitary. This proves the
forward implication.

Conversely, suppose
\begin{equation}\label{eq:given-dual-conjugacy}
 D_0+D_1\epsilon
 =\widehat U(C_0+C_1\epsilon)\widehat U^*
\end{equation}
for a dual complex unitary matrix
$\widehat U=U_0+U_1\epsilon$. Comparing coefficients in
$\widehat U\widehat U^*=I_{2n}$ shows that $U_0$ is unitary and
\[
 K:=U_1U_0^*\quad\text{satisfies}\quad K^*=-K.
\]
Equivalently, $\widehat U=(I_{2n}+K\epsilon)U_0$. Comparing the two
coefficients in \eqref{eq:given-dual-conjugacy} gives
\begin{align}
 D_0&=U_0C_0U_0^*,\label{eq:standard-coefficient}\\
 D_1&=U_0C_1U_0^*+(KD_0-D_0K).
 \label{eq:dual-coefficient}
\end{align}

Apply Lemma~\ref{lem:replacement} to the single pair $(C_0,D_0)$. There is
a unitary matrix $S$, fixed by $\Theta$, such that
\[
 D_0=SC_0S^*.
\]
Set
\[
 C=D_0,\qquad X=SC_1S^*,\qquad Y=D_1,\qquad g=U_0S^*.
\]
Then $C,X,Y$ are fixed by $\Theta$, $gCg^*=C$, and
\begin{equation}\label{eq:reduced-dual-equation}
 Y=gXg^*+(KC-CK).
\end{equation}
Applying $\pi_C$ and using Lemma~\ref{lem:projection}(ii) gives
\begin{equation}\label{eq:projected-equation}
 Y_E=gX_Eg^*,\qquad X_E:=\pi_C(X),\quad Y_E:=\pi_C(Y).
\end{equation}
Lemma~\ref{lem:projection}(i) shows that $X_E$ and $Y_E$ are fixed by
$\Theta$.

The unitary $g$ simultaneously conjugates $(C,X_E)$ to $(C,Y_E)$.
Lemma~\ref{lem:replacement}, now applied to this pair of matrices, provides
a unitary $h$, fixed by $\Theta$, such that
\begin{equation}\label{eq:structured-pair}
 hCh^*=C,\qquad hX_Eh^*=Y_E.
\end{equation}
Let
\[
 \Delta=Y-hXh^*.
\]
It is fixed by $\Theta$. By Lemma~\ref{lem:projection}(ii) and
\eqref{eq:structured-pair},
\[
 \pi_C(\Delta)=Y_E-hX_Eh^*=0.
\]
Therefore $\Delta\in\ker\pi_C=\mathcal{R}_C$. Part (iii) of
Lemma~\ref{lem:projection} yields a matrix $K_0$ such that
\[
 K_0^*=-K_0,\qquad \Theta(K_0)=K_0,\qquad
 \Delta=K_0C-CK_0.
\]

Define
\[
 \widehat T=(I_{2n}+K_0\epsilon)h,
 \qquad
 \widehat W=\widehat T S.
\]
We verify explicitly both the unitarity of $\widehat T$ and the effect of
conjugation by $\widehat T$. Since $\epsilon$ is real and commutes with
complex matrices, $K_0^*=-K_0$ gives
\[
 \widehat T^*
 =\bigl((I_{2n}+K_0\epsilon)h\bigr)^*
 =h^*(I_{2n}+K_0^*\epsilon)
 =h^*(I_{2n}-K_0\epsilon).
\]
Because $h$ is unitary and $\epsilon^2=0$, it follows that
\begin{align*}
 \widehat T\widehat T^*
 &=(I_{2n}+K_0\epsilon)hh^*(I_{2n}-K_0\epsilon)\\
 &=(I_{2n}+K_0\epsilon)(I_{2n}-K_0\epsilon)
   =I_{2n}-K_0^2\epsilon^2=I_{2n},
\end{align*}
\begin{samepage}
and, similarly,
\begin{align*}
 \widehat T^*\widehat T
 &=h^*(I_{2n}-K_0\epsilon)(I_{2n}+K_0\epsilon)h\\
 &=h^*(I_{2n}-K_0^2\epsilon^2)h
   =h^*h=I_{2n}.
\end{align*}
\end{samepage}
Thus $\widehat T$ is dual complex unitary. Since $S$ is also dual
complex unitary, so is $\widehat W=\widehat T S$.

Next, $hCh^*=C$ by \eqref{eq:structured-pair}. Therefore
\begin{align*}
 \widehat T(C+X\epsilon)\widehat T^*
 &=(I_{2n}+K_0\epsilon)h(C+X\epsilon)h^*
   (I_{2n}-K_0\epsilon)\\
 &=(I_{2n}+K_0\epsilon)
   \bigl(C+hXh^*\epsilon\bigr)(I_{2n}-K_0\epsilon)\\
 &=\bigl[C+hXh^*\epsilon+K_0C\epsilon
   +K_0hXh^*\epsilon^2\bigr](I_{2n}-K_0\epsilon)\\
 &=\bigl[C+(hXh^*+K_0C)\epsilon\bigr]
   (I_{2n}-K_0\epsilon)\\
 &=C-CK_0\epsilon+(hXh^*+K_0C)\epsilon
   -(hXh^*+K_0C)K_0\epsilon^2\\
 &=C+\bigl(hXh^*+K_0C-CK_0\bigr)\epsilon.
\end{align*}
All terms containing $\epsilon^2$ vanish in the last two equalities. By
$\Delta=Y-hXh^*$ and $\Delta=K_0C-CK_0$, the coefficient of $\epsilon$
in the last expression is
\[
 hXh^*+K_0C-CK_0=hXh^*+\Delta=Y.
\]
Consequently,
\[
 \widehat T(C+X\epsilon)\widehat T^*
 =C+Y\epsilon.
\]
Every coefficient of $\widehat W$ is fixed by $\Theta$. It follows that
\[
 B_{\mathbb C}=\widehat W A_{\mathbb C}\widehat W^*.
\]
Since both coefficients of $\widehat W$ are fixed by $\Theta$, the
coefficientwise version of \eqref{eq:fixed-adjoint} yields a dual
quaternion matrix $W$ such that $\widehat W=W_{\mathbb C}$. Furthermore,
\[
 (WW^*)_{\mathbb C}
 =W_{\mathbb C}W_{\mathbb C}^*
 =\widehat W\widehat W^*
 =I_{2n}=(I_n)_{\mathbb C}.
\]
By injectivity of the complex adjoint representation, $WW^*=I_n$; hence
$W$ is unitary. Finally,
\[
 (B-WAW^*)_{\mathbb C}
 =B_{\mathbb C}-\widehat W A_{\mathbb C}\widehat W^*=0.
\]
Injectivity again gives $B=WAW^*$.
This proves the converse.
\end{proof}

For finite families of dual quaternion matrices, we obtain a conclusion similar to that of Lemma \ref{lem:replacement}.

\begin{theorem}
\label{cor:simultaneous-dual}
Let $A_1,\ldots,A_s,B_1,\ldots,B_s\in\mathbb{DQ}^{n\times n}$. The following
statements are equivalent:
\begin{enumerate}
\item [(i)]there is a dual quaternion unitary matrix $W\in\mathbb{DQ}^{n\times n}$ such that
\[
 B_r=WA_rW^*,\qquad r=1,\ldots,s;
\]
\item [(ii)]there is a dual complex unitary matrix $\widehat U\in\mathbb{DC}^{2n\times 2n}$ such that
\[
 (B_r)_{\mathbb C}
 =\widehat U(A_r)_{\mathbb C}\widehat U^*,
 \qquad r=1,\ldots,s.
\]
\end{enumerate}
Moreover, if (ii) holds, then the equalities in (ii) are also realized by a
possibly different dual complex unitary matrix fixed by $\Theta$; equivalently, this matrix has the form $W_{\mathbb C}$
for a dual quaternion unitary matrix $W$.
\end{theorem}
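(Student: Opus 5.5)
The implication (i)$\Rightarrow$(ii) is immediate. If $B_r=WA_rW^*$ for all $r$, then the multiplicativity and adjoint-compatibility of the dual complex adjoint give $(B_r)_{\mathbb C}=W_{\mathbb C}(A_r)_{\mathbb C}W_{\mathbb C}^*$, and $W_{\mathbb C}$ is dual complex unitary. For the converse I will rerun the proof of Theorem~\ref{thm:arbitrary-order}, replacing the single pair by the whole family.

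Write $A_r=A_{r,st}+A_{r,\mathcal I}\epsilon$ and $B_r=B_{r,st}+B_{r,\mathcal I}\epsilon$. Put $C_{0r}=(A_{r,st})_{\mathbb C}$, $C_{1r}=(A_{r,\mathcal I})_{\mathbb C}$, and define $D_{0r},D_{1r}$ similarly; all of these are fixed by $\Theta$. Write $\widehat U=(I_{2n}+K\epsilon)U_0$ with $U_0$ unitary and $K^*=-K$. Then for every $r$ we have $D_{0r}=U_0C_{0r}U_0^*$ and $D_{1r}=U_0C_{1r}U_0^*+(KD_{0r}-D_{0r}K)$. Lemma~\ref{lem:replacement}, applied to the family $(C_{0r},D_{0r})_{r}$, gives a unitary $S$ fixed by $\Theta$ with $D_{0r}=SC_{0r}S^*$ for all $r$. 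Set $\mathbf C=(D_{01},\dots,D_{0s})$, $X_r=SC_{1r}S^*$, $Y_r=D_{1r}$ and $g=U_0S^*$. Then $gD_{0r}g^*=D_{0r}$, and the tuple $(Y_r)_r$ equals $(gX_rg^*)_r$ plus $(KD_{0r}-D_{0r}K)_r$.

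The main step is to upgrade Lemma~\ref{lem:projection} to tuples, because the correction $K$ is shared across all indices. I will work in $(\mathbb{C}^{d\times d})^s$ with the inner product $\sum_r\langle\cdot,\cdot\rangle_{\mathbb R}$. Let $\mathcal R_{\mathbf C}=\{(KD_{0r}-D_{0r}K)_r:K\in\mathcal K_d\}$, let $\mathcal E_{\mathbf C}$ be its orthogonal complement, and let $\pi_{\mathbf C}$ be the orthogonal projection onto $\mathcal E_{\mathbf C}$. I then check the three parts of the lemma for tuples. For (i), $\Theta$ acts componentwise as an isometry preserving $\mathcal R_{\mathbf C}$. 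For (ii), diagonal conjugation by a unitary $g$ fixing every $D_{0r}$ is an isometry preserving $\mathcal R_{\mathbf C}$. For (iii), averaging $K$ with $\Theta(K)$ gives a $\Theta$-fixed skew-Hermitian preimage. Each verification is identical to the single-matrix one, since each argument only used linearity, isometry, and the fact that one $K$ generates the element.

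Next let $X_{E}=\pi_{\mathbf C}(X)$ and $Y_E=\pi_{\mathbf C}(Y)$, with components $X_{E,r},Y_{E,r}$; these are fixed by $\Theta$ and satisfy $Y_{E,r}=gX_{E,r}g^*$. Apply Lemma~\ref{lem:replacement} to the $2s$ pairs $(D_{0r},D_{0r})$ and $(X_{E,r},Y_{E,r})$. This gives a $\Theta$-fixed unitary $h$ with $hD_{0r}h^*=D_{0r}$ and $hX_{E,r}h^*=Y_{E,r}$. The tuple $\Delta=(Y_r-hX_rh^*)_r$ is then $\Theta$-fixed and lies in $\ker\pi_{\mathbf C}=\mathcal R_{\mathbf C}$. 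The tuple version of (iii) yields one $\Theta$-fixed skew-Hermitian $K_0$ with $\Delta_r=K_0D_{0r}-D_{0r}K_0$ for every $r$.

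Finally set $\widehat W=(I_{2n}+K_0\epsilon)hS$. The same computation as in Theorem~\ref{thm:arbitrary-order} shows that $\widehat W$ is unitary and $\widehat W(A_r)_{\mathbb C}\widehat W^*=(B_r)_{\mathbb C}$ for each $r$. Both coefficients of $\widehat W$ are fixed by $\Theta$, so \eqref{eq:fixed-adjoint} gives $\widehat W=W_{\mathbb C}$ for some $W$. Injectivity of the adjoint then gives $WW^*=I_n$ and $B_r=WA_rW^*$. The point needing most care is the tuple version of Lemma~\ref{lem:projection}, specifically that a single $K_0$ serves all $r$ simultaneously; this is exactly why the range space $\mathcal R_{\mathbf C}$ is defined jointly over the tuple rather than componentwise.
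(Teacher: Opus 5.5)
Your proposal is correct and follows the paper's proof essentially step for step. It uses the same decomposition $\widehat U=(I_{2n}+K\epsilon)U_0$ and the simultaneous application of Lemma~\ref{lem:replacement} to obtain $S$. It uses the same joint commutator range in $(\mathbb{C}^{2n\times 2n})^s$, whose orthogonal projection commutes with $\Theta$ and with conjugation by unitaries fixing every $D_{0r}$, and the same second application of Lemma~\ref{lem:replacement} to the $2s$ pairs. Finally, it produces the same $\Theta$-averaged $K_0$ shared across all indices, which is exactly what the paper's map $\delta_{\mathbf D}$ is built to deliver.
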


\begin{proof}
The implication (i)$\Rightarrow$(ii) follows immediately
from the multiplicative and adjoint-preserving properties of the dual
complex adjoint representation. We prove the converse.

Write
\[
 A_r=A_{r,st}+A_{r,\mathcal{I}}\epsilon,
 \qquad
 B_r=B_{r,st}+B_{r,\mathcal{I}}\epsilon
\]
and set
\[
 C_r=(A_{r,st})_{\mathbb C},\quad
 X_r=(A_{r,\mathcal{I}})_{\mathbb C},\qquad
 D_r=(B_{r,st})_{\mathbb C},\quad
 Y_r=(B_{r,\mathcal{I}})_{\mathbb C}.
\]
All these matrices are fixed by $\Theta$. By the unitarity of
$\widehat U$, it can be written as
\[
 \widehat U=(I_{2n}+K\epsilon)U_0,
 \qquad U_0^*U_0=I_{2n},\qquad K^*=-K.
\]
Comparing the standard and dual coefficients in \textup{(ii)} gives, for
every $r$,
\begin{align}
 D_r&=U_0C_rU_0^*,\label{eq:sim-standard}\\
 Y_r&=U_0X_rU_0^*+[K,D_r].\label{eq:sim-dual}
\end{align}

Apply Lemma~\ref{lem:replacement} simultaneously to all the pairs
$(C_r,D_r)$. There is a unitary matrix $S$, fixed by $\Theta$, such that
\[
 D_r=SC_rS^*,\qquad r=1,\ldots,s.
\]
Define
\[
 \widetilde X_r=SX_rS^*,
 \qquad g=U_0S^*.
\]
Then $gD_rg^*=D_r$ for every $r$, and \eqref{eq:sim-dual} becomes
\begin{equation}\label{eq:sim-reduced}
 Y_r=g\widetilde X_rg^*+[K,D_r],
 \qquad r=1,\ldots,s.
\end{equation}

To treat all commutator equations at once, regard
\[
 \mathcal M_s=(\mathbb{C}^{2n\times2n})^s
\]
as a real inner-product space with
\[
 \langle(Z_r)_{r=1}^s,(T_r)_{r=1}^s\rangle
 =\sum_{r=1}^s Re~tr(Z_r^*T_r).
\]
Let $\mathcal{K}=\{L\in\mathbb{C}^{2n\times2n}:L^*=-L\}$ and define the joint
commutator map
\[
 \delta_{\mathbf D}:\mathcal{K}\longrightarrow\mathcal M_s,
 \qquad
 \delta_{\mathbf D}(L)=([L,D_1],\ldots,[L,D_s]).
\]
Put
\[
 \mathcal R_{\mathbf D}=\operatorname{ran}\delta_{\mathbf D},
 \qquad
 \mathcal E_{\mathbf D}=\mathcal R_{\mathbf D}^{\perp},
\]
and denote by $\Pi_{\mathbf D}$ the real orthogonal projection onto
$\mathcal E_{\mathbf D}$.

Let $\Theta_s$ act componentwise on $\mathcal M_s$. Because every $D_r$ is
fixed by $\Theta$,
\[
 \Theta_s\bigl(\delta_{\mathbf D}(L)\bigr)
 =\delta_{\mathbf D}(\Theta(L)).
\]
Thus $\Theta_s$ preserves $\mathcal R_{\mathbf D}$ and, being a real
isometry, also preserves $\mathcal E_{\mathbf D}$. Consequently,
\begin{equation}\label{eq:sim-projection-theta}
 \Pi_{\mathbf D}\Theta_s=\Theta_s\Pi_{\mathbf D}.
\end{equation}
Similarly, if a unitary matrix $q$ fixes every $D_r$ under conjugation,
then componentwise conjugation $\operatorname{Ad}_q$ satisfies
\[
 \operatorname{Ad}_q\bigl(\delta_{\mathbf D}(L)\bigr)
 =\delta_{\mathbf D}(qLq^*).
\]
It therefore preserves both orthogonal summands, and
\begin{equation}\label{eq:sim-projection-conjugation}
 \Pi_{\mathbf D}\operatorname{Ad}_q
 =\operatorname{Ad}_q\Pi_{\mathbf D}.
\end{equation}

Use bold letters for tuples, for example
$\widetilde{\mathbf X}=(\widetilde X_1,\ldots,\widetilde X_s)$ and
$\mathbf Y=(Y_1,\ldots,Y_s)$. Equation \eqref{eq:sim-reduced} says
\[
 \mathbf Y
 =\operatorname{Ad}_g(\widetilde{\mathbf X})
  +\delta_{\mathbf D}(K).
\]
Applying $\Pi_{\mathbf D}$ and using
\eqref{eq:sim-projection-conjugation} gives
\begin{equation}\label{eq:sim-projected}
 \Pi_{\mathbf D}(\mathbf Y)
 =\operatorname{Ad}_g
   \bigl(\Pi_{\mathbf D}(\widetilde{\mathbf X})\bigr).
\end{equation}
By \eqref{eq:sim-projection-theta}, every component of both projected
tuples is fixed by $\Theta$.

The unitary $g$ simultaneously conjugates the family consisting of the
$s$ matrices $D_r$ and the $s$ components of
$\Pi_{\mathbf D}(\widetilde{\mathbf X})$ to the corresponding family
consisting of the same $D_r$ and the components of
$\Pi_{\mathbf D}(\mathbf Y)$. Lemma~\ref{lem:replacement}, applied to
these $2s$ pairs, gives a unitary matrix $h$, fixed by $\Theta$, such that
\begin{equation}\label{eq:sim-structured-projected}
 hD_rh^*=D_r\quad(r=1,\ldots,s),\qquad
 \operatorname{Ad}_h
 \bigl(\Pi_{\mathbf D}(\widetilde{\mathbf X})\bigr)
 =\Pi_{\mathbf D}(\mathbf Y).
\end{equation}

Set
\[
 \boldsymbol\Delta
 =\mathbf Y-\operatorname{Ad}_h(\widetilde{\mathbf X}).
\]
Since $h$ fixes every $D_r$, equations
\eqref{eq:sim-projection-conjugation} and
\eqref{eq:sim-structured-projected} imply
\[
 \Pi_{\mathbf D}(\boldsymbol\Delta)=0.
\]
Hence $\boldsymbol\Delta\in\mathcal R_{\mathbf D}$, so there is an
$L\in\mathcal{K}$ with
\[
 \boldsymbol\Delta=\delta_{\mathbf D}(L).
\]
The tuple $\boldsymbol\Delta$ is fixed by $\Theta_s$. Therefore the averaged
matrix
\[
 K_0=\frac{L+\Theta(L)}2
\]
is skew-Hermitian, is fixed by $\Theta$, and satisfies
\begin{equation}\label{eq:sim-structured-commutator}
 \delta_{\mathbf D}(K_0)
 =\frac12\bigl(\boldsymbol\Delta
                 +\Theta_s(\boldsymbol\Delta)\bigr)
 =\boldsymbol\Delta.
\end{equation}

Define
\[
 \widehat T=(I_{2n}+K_0\epsilon)h,
 \qquad
 \widehat W=\widehat T S.
\]
Because $K_0^*=-K_0$, $h$ and $S$ are unitary, and $\epsilon^2=0$, both
$\widehat T$ and $\widehat W$ are dual complex unitary. Their coefficients
are fixed by $\Theta$. For every $r$, the first-order conjugation formula,
\eqref{eq:sim-structured-projected}, and
\eqref{eq:sim-structured-commutator} give
\begin{align*}
 \widehat T(D_r+\widetilde X_r\epsilon)\widehat T^*
 &=D_r+\bigl(h\widetilde X_rh^*+[K_0,D_r]\bigr)\epsilon\\
 &=D_r+Y_r\epsilon.
\end{align*}
Since
\[
 S(C_r+X_r\epsilon)S^*=D_r+\widetilde X_r\epsilon,
\]
we obtain
\[
 (B_r)_{\mathbb C}
 =\widehat W(A_r)_{\mathbb C}\widehat W^*,
 \qquad r=1,\ldots,s.
\]

The coefficientwise fixed-point identity \eqref{eq:fixed-adjoint} gives a
dual quaternion matrix $W$ with $\widehat W=W_{\mathbb C}$. Moreover,
\[
 (WW^*)_{\mathbb C}
 =\widehat W\widehat W^*=I_{2n}=(I_n)_{\mathbb C},
\]
so injectivity of the complex adjoint representation shows that $W$ is
unitary. For each $r$,
\[
 (B_r-WA_rW^*)_{\mathbb C}=0,
\]
and injectivity again gives $B_r=WA_rW^*$. Thus the same dual quaternion
unitary matrix works for the whole family.
\end{proof}

\begin{corollary}\label{cor:scalar-unified}
For any two dual quaternions $A,B\in\mathbb{DQ}$, whether their standard parts are
real or nonreal,
\[
 A\sim_u B\quad\Longleftrightarrow\quad
 A_{\mathbb C}\sim_u B_{\mathbb C}.
\]
Thus no separate construction according to the standard part is required.
\end{corollary}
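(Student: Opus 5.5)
This corollary is the case $n=1$ of Theorem~\ref{thm:arbitrary-order}, and the plan is to prove it by direct specialization. Dual quaternions $A,B\in\mathbb{DQ}$ are identified with $1\times1$ dual quaternion matrices. In that reading, unitary similarity $A\sim_u B$ means the existence of a unit dual quaternion $w$ (a $1\times1$ unitary) with $B=wAw^{\ast}$. The dual complex adjoints $A_{\mathbb C},B_{\mathbb C}$ are then $2\times2$ dual complex matrices, and Theorem~\ref{thm:arbitrary-order} with $n=1$ gives exactly the displayed equivalence.

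\textbf{Forward direction.} I would note that it is immediate from properties (2) and (4) of the dual complex adjoint. If $B=wAw^{\ast}$, then $B_{\mathbb C}=w_{\mathbb C}A_{\mathbb C}w_{\mathbb C}^{\ast}$, and $w_{\mathbb C}$ is dual complex unitary by property (6).

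\textbf{Converse.} I would invoke the converse half of Theorem~\ref{thm:arbitrary-order}. The key point to check is that its proof nowhere assumes anything about the spectrum of the standard part. It uses only three ingredients: Lemma~\ref{lem:replacement} (the $\Theta$-averaging, polar-decomposition argument); the projection Lemma~\ref{lem:projection}, applied with $C=(B_{st})_{\mathbb C}$; and the $\Theta$-averaging of the skew-Hermitian correction $K_0$.

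\textbf{Real versus nonreal standard part.} None of these ingredients distinguishes whether $A_{st}$ is real, which is why the clause ``whether their standard parts are real or nonreal'' holds automatically.
\begin{itemize}
\item When $A_{st}$ is real, $C$ is the scalar matrix $a I_2$. Then $\mathcal{R}_C=0$, so $\pi_C$ is the identity, and the argument reduces to simultaneous unitary similarity of the pairs. This is consistent with Proposition~\ref{99}.
\item When $A_{st}$ is nonreal, $\mathcal{R}_C$ is nontrivial, and the projection machinery absorbs the commutator part.
\end{itemize}
In either case the output is a $\Theta$-fixed dual unitary $\widehat W=W_{\mathbb C}$, and $W$ is a unit dual quaternion with $B=WAW^{\ast}$.

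\textbf{Main obstacle.} The only step needing care is confirming that no hidden hypothesis of Theorem~\ref{thm:arbitrary-order} fails at $n=1$. In particular, Lemma~\ref{lem:replacement} needs a real $t_0$ with $\det(R+t_0S)\neq0$, and this holds for any size. So no separate case construction is needed, and I would conclude by stating this remark explicitly.
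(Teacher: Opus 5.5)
Your proposal is correct and matches the paper. The corollary is exactly the case $n=1$ of Theorem~\ref{thm:arbitrary-order}, whose proof (via Lemma~\ref{lem:replacement} and Lemma~\ref{lem:projection}) makes no assumption on the spectrum of the standard part, and your side remarks on the real case ($C=aI_2$, $\mathcal{R}_C=0$, consistent with Proposition~\ref{99}) and on the nonreal case are accurate.
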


We now return to Trace words. The following definition applies in arbitrary
order.
\begin{definition}
Suppose that $A=A_{st}+A_{\mathcal{I}}\epsilon\in\mathbb{DQ}^{n\times n}$, where $A_{st},A_{\mathcal{I}}\in\mathbb{Q}^{n\times n}$. Define the Trace of $A$ as
$$
\mathrm{Tr}(A)=\mathrm{Tr}(A_{st})+\mathrm{Tr}(A_{\mathcal{I}})\epsilon.
$$
Similarly, we use the componentwise notation
\[
\operatorname{Tr}(M_1,\ldots,M_k)
:=
\bigl(\operatorname{Tr}(M_1),\ldots,\operatorname{Tr}(M_k)\bigr).
\]
\end{definition}
Similar to the traces of dual complex matrices, the Traces of dual quaternion matrices satisfy the following basic identities.

\begin{proposition}
Suppose that $A=A_{st}+A_{\mathcal{I}}\epsilon,B=B_{st}+B_{\mathcal{I}}\epsilon\in\mathbb{DQ}^{n\times n}$, where $A_{st},A_{\mathcal{I}},B_{st},B_{\mathcal{I}}\in\mathbb{Q}^{n\times n}$. $\lambda\in\mathbb{DR}$. Then
\begin{enumerate}
\item [(1)]$\mathrm{Tr}(A+B)=\mathrm{Tr}(A)+\mathrm{Tr}(B)$.
\item [(2)]$\mathrm{Tr}(AB)=\mathrm{Tr}(BA)$.
\item [(3)]$\lambda \mathrm{Tr}(A)=\mathrm{Tr}(\lambda A)$.
\item [(4)]If $B=P^{-1}AP$ for some invertible matrix $P\in\mathbb{DQ}^{n\times n}$, then $\mathrm{Tr}(A)=\mathrm{Tr}(B)$.
\item [(5)]$\mathrm{Tr}(A^{T})=\mathrm{Tr}(A)$.
\item [(6)]$\mathrm{Tr}(\overline{A})=\overline{\mathrm{Tr}(A)}$.
\item [(7)]$\mathrm{Tr}(A^{\ast})=\overline{\mathrm{Tr}(A)}$.
\end{enumerate}
\end{proposition}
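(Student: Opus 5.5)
The plan is to reduce every item to the corresponding property of the quaternion Trace, applied separately to the standard and the infinitesimal coefficient. The only facts needed are the definition $\mathrm{Tr}(A)=\mathrm{Tr}(A_{st})+\mathrm{Tr}(A_{\mathcal{I}})\epsilon$, the multiplication rule $AB=A_{st}B_{st}+(A_{st}B_{\mathcal{I}}+A_{\mathcal{I}}B_{st})\epsilon$, and the basic properties of the quaternion Trace recalled in Section 2: additivity, real-linearity, cyclic invariance $\mathrm{Tr}(PQ)=\mathrm{Tr}(QP)$ for $P,Q\in\mathbb{Q}^{n\times n}$, and invariance under transpose, conjugation and conjugate transpose. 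Since $\mathrm{Tr}$ of a quaternion matrix is a real number, the bar in (6) and (7) acts trivially. Those two items therefore assert that the Trace is invariant under conjugation and under conjugate transpose.

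Items (1), (5), (6) and (7) follow coefficientwise: $\overline{A}$, $A^{T}$ and $A^{*}$ act coefficientwise on $A_{st}$ and $A_{\mathcal{I}}$. For (3), write $\lambda=\lambda_{st}+\lambda_{\mathcal{I}}\epsilon$ with real coefficients, so that
\[
\lambda A=\lambda_{st}A_{st}+(\lambda_{st}A_{\mathcal{I}}+\lambda_{\mathcal{I}}A_{st})\epsilon .
\]
Real-linearity of $\mathrm{Tr}$ then gives $\lambda_{st}\mathrm{Tr}(A_{st})+(\lambda_{st}\mathrm{Tr}(A_{\mathcal{I}})+\lambda_{\mathcal{I}}\mathrm{Tr}(A_{st}))\epsilon$, which equals $\lambda\,\mathrm{Tr}(A)$. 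The restriction $\lambda\in\mathbb{DR}$, i.e.\ dual real numbers, is exactly what this step requires, since the quaternion Trace is only linear over real scalars.

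For (2), expand the product:
\[
\mathrm{Tr}(AB)=\mathrm{Tr}(A_{st}B_{st})+\bigl(\mathrm{Tr}(A_{st}B_{\mathcal{I}})+\mathrm{Tr}(A_{\mathcal{I}}B_{st})\bigr)\epsilon .
\]
Applying quaternion cyclic invariance to each of the three terms gives the analogous expansion of $\mathrm{Tr}(BA)$. For (4), write $B=P^{-1}(AP)$ and apply (2) to obtain $\mathrm{Tr}(B)=\mathrm{Tr}(APP^{-1})=\mathrm{Tr}(A)$. Invertibility in $\mathbb{DQ}^{n\times n}$ is used only through $PP^{-1}=I$.

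The one step that genuinely needs justification is the cyclic identity $\mathrm{Tr}(PQ)=\mathrm{Tr}(QP)$ for quaternion matrices, since quaternion entries do not commute. I would either cite it as stated in Section 2 or verify it directly from $\mathrm{Re}(pq)=\mathrm{Re}(qp)$ for $p,q\in\mathbb{Q}$:
\[
\mathrm{Re}\sum_{i,k}p_{ik}q_{ki}=\mathrm{Re}\sum_{i,k}q_{ki}p_{ik}.
\]
Alternatively, it follows from $\mathrm{Tr}(P)=\mathrm{tr}(P_{\mathbb{C}})$ together with multiplicativity of the complex adjoint. All remaining steps are routine bookkeeping of the $\epsilon$-coefficients.
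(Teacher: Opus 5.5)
Your proposal is correct and follows essentially the route the paper intends. The paper states these identities without proof, as coefficientwise consequences of the quaternion Trace properties recalled in Section~2 (real-linearity, cyclic invariance, invariance under transpose and conjugate transpose); your alternative justification of cyclicity via $\mathrm{Tr}(P)=\mathrm{tr}(P_{\mathbb{C}})$ is the identity the paper records right after the proposition, and your remarks that the bar in (6)--(7) acts trivially on the dual-real value $\mathrm{Tr}(A)$ and that (3) needs $\lambda\in\mathbb{DR}$ match the intended reading.
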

By the definition of complex trace and quaternion Trace, it can be seen that for any $A\in\mathbb{DQ}^{n\times n}$
$$
\mathrm{Tr}(A)=\mathrm{tr}(A_{\mathbb{C}}).
$$

The arbitrary-order adjoint criterion does not by itself supply a finite
Trace-word criterion in arbitrary order. For scalar dual quaternions,
however, the $2\times2$ dual complex result from Section~4 gives the
following conclusion.

\begin{proposition}
Suppose that $A=A_{st}+A_{\mathcal{I}}\epsilon,B=B_{st}+B_{\mathcal{I}}\epsilon\in\mathbb{DQ}$ and $A_{st}$ is not real. $A_{\mathbb{C}}$ and $B_{\mathbb{C}}$ are complex adjoint matrices of $A$ and $B$, respectively. Then the following statements are equivalent.
\begin{enumerate}
\item $\mathrm{tr}(A_{\mathbb{C}},A_{\mathbb{C}}^{2},A_{\mathbb{C}}^{\ast}A_{\mathbb{C}})=\mathrm{tr}(B_{\mathbb{C}},B_{\mathbb{C}}^{2},B_{\mathbb{C}}^{\ast}B_{\mathbb{C}})$.
\item $A_{\mathbb{C}}$ and $B_{\mathbb{C}}$ are unitarily similar.
\end{enumerate}
\end{proposition}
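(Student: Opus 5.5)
The plan is to reduce the statement to Theorem \ref{ppp}, applied to the $2\times2$ dual complex matrices $A_{\mathbb C}$ and $B_{\mathbb C}$. That theorem requires the standard part of $A_{\mathbb C}$ to be nonnormal. But a scalar quaternion $q=z_1+z_2\boldsymbol{j}$ has complex adjoint
\[
\begin{pmatrix} z_1 & z_2\\ -\overline{z_2} & \overline{z_1}\end{pmatrix},
\]
which is always normal: it is a real multiple of a unitary matrix. So Theorem \ref{ppp} does not apply directly, and the argument has to follow the normal case of Section 4 instead. The direction (2)$\Rightarrow$(1) is immediate from invariance of trace under unitary similarity, exactly as in the necessity part of Theorem \ref{ppp}. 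The work is in (1)$\Rightarrow$(2).

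For (1)$\Rightarrow$(2), I first normalize the standard part. Since $A_{st}$ is a nonreal quaternion, it is unitarily similar in $\mathbb{Q}$ to a complex number $a=\alpha+\beta\boldsymbol{i}$ with $\beta>0$. Equality of $\mathrm{tr}(A_{\mathbb C})$ and $\mathrm{tr}(A_{\mathbb C}^2)$ in the standard part forces $B_{st}$ to have the same real part and the same modulus of the imaginary part, hence to be similar to the same $a$. By Theorem \ref{thm:arbitrary-order}, or directly, it suffices to replace $A,B$ by quaternion-unitarily similar copies. So I may assume
\[
(A_{st})_{\mathbb C}=(B_{st})_{\mathbb C}=\mathrm{diag}(a,\overline a),
\]
a normal matrix with distinct eigenvalues $a\neq\overline a$. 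This is exactly the setting of Remark \ref{55}, with $a_4=\overline{a_1}$.

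Next I verify the hypotheses of \eqref{4.26}. The infinitesimal parts $(A_{\mathcal I})_{\mathbb C}$ and $(B_{\mathcal I})_{\mathbb C}$ have complex adjoint form, so their off-diagonal entries satisfy $b_3=-\overline{b_2}$ and $d_3=-\overline{d_2}$. The dual parts of the traces of $A_{\mathbb C}$ and $A_{\mathbb C}^2$ give $b_1=d_1$ and $b_4=d_4$, as in Remark \ref{55}, using $a_1\neq a_4$. Now take the unitary $U_{st}=\mathrm{diag}(e^{\boldsymbol{i}\theta},e^{\boldsymbol{i}\beta})$ with $\beta=-\theta$. Remark \ref{55} shows that condition \eqref{4.25} then holds automatically, for any $\theta$. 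It supplies a skew-Hermitian solution $X$ of \eqref{15}, and Theorem \ref{14} yields that $A_{\mathbb C}\sim_u B_{\mathbb C}$.

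I expect the main obstacle to be the normalization step. Justifying that $A$ and $B$ can be simultaneously brought to standard part $a$ uses only traces of standard parts. I then need to check that replacing $A$ by $WAW^*$ with $W$ a quaternion unit preserves the three traces of the adjoints; this holds since $W_{\mathbb C}$ is unitary. I also need to track the dual part of the trace of $A_{\mathbb C}^{\ast}A_{\mathbb C}$. I do not expect to need it, but I would check that it imposes no conflicting condition. Finally, I would note that the proof also makes the hypothesis ``$A_{st}$ nonreal'' essential: it is what gives $a_1\neq a_4$, whereas the real case falls under Proposition \ref{99}.
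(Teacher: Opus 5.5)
Your proposal is correct and follows essentially the paper's route: realize the standard-part similarity by a complex-adjoint-form unitary so that $a_4=\overline{a_1}$, $b_3=-\overline{b_2}$, $d_3=-\overline{d_2}$, obtain $b_1=d_1$ and $b_4=d_4$ from the dual parts of the first two trace identities (using $a_1\neq a_4$), and conclude via \eqref{4.26}$\Rightarrow$\eqref{4.25} and Theorem~\ref{14}. The only difference is that you make the normalization explicit, rotating each nonreal standard part to $\alpha+\beta\boldsymbol{i}$ by a unit quaternion so that both standard parts become $\mathrm{diag}(a,\overline{a})$, whereas the paper cites Lemma~\ref{hu} for that step.
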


\begin{proof}
The implication $(2)\Rightarrow(1)$ follows immediately from the
invariance of the trace under unitary similarity.

Conversely, assume (1), and write
\[
A_{\mathbb{C}}=C+X\epsilon,\qquad B_{\mathbb{C}}=D+Y\epsilon,
\]
where $C=(A_{\rm st})_{\mathbb{C}}$ and $D=(B_{\rm st})_{\mathbb{C}}$. Both $C$ and $D$
are normal. Since $A_{\rm st}$ is nonreal, $C$ has two distinct
eigenvalues. Comparing the standard parts of
$\operatorname{tr}(A_{\mathbb{C}})=\operatorname{tr}(B_{\mathbb{C}})$ and
$\operatorname{tr}(A_{\mathbb{C}}^2)=\operatorname{tr}(B_{\mathbb{C}}^2)$ shows that
$C$ and $D$ have the same characteristic polynomial. Hence they
are unitarily similar.

By Lemma \ref{hu}, the unitary similarity of the standard parts may be
realized by a unitary matrix of complex adjoint form. After this
change of basis, the infinitesimal parts still have complex-adjoint
form. Consequently, in the notation of Remark \ref{55}, their entries
satisfy
\[
a_4=\overline{a_1},\qquad
b_3=-\overline{b_2},\qquad
d_3=-\overline{d_2}.
\]
The dual parts of the three trace identities give the remaining
conditions in Remark \ref{55}, while the above relations make
condition (\ref{4.25}) hold. Therefore, by Theorem \ref{14}, $A_{\mathbb{C}}$ and $B_{\mathbb{C}}$
are unitarily similar.
\end{proof}

\begin{corollary}\label{999}
Suppose that $A=A_{st}+A_{\mathcal{I}}\epsilon,B=B_{st}+B_{\mathcal{I}}\epsilon\in\mathbb{DQ}$ and $A_{st}$ is not real. Then $A$ is unitarily similar to $B$ if and only if 
$$
\mathrm{Tr}(A,A^{2},A^{\ast}A)=\mathrm{Tr}(B,B^{2},B^{\ast}B).
$$
\end{corollary}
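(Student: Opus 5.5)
The plan is to chain three earlier results. The first is Corollary \ref{cor:scalar-unified}, specialised to $n=1$: $A\sim_u B$ in $\mathbb{DQ}$ if and only if $A_{\mathbb{C}}\sim_u B_{\mathbb{C}}$ in $\mathbb{DC}^{2\times 2}$. The second is the preceding proposition: for $A_{st}$ nonreal, $A_{\mathbb{C}}\sim_u B_{\mathbb{C}}$ if and only if
\[
\mathrm{tr}(A_{\mathbb{C}},A_{\mathbb{C}}^{2},A_{\mathbb{C}}^{\ast}A_{\mathbb{C}})=\mathrm{tr}(B_{\mathbb{C}},B_{\mathbb{C}}^{2},B_{\mathbb{C}}^{\ast}B_{\mathbb{C}}).
\]
The third is the identity $\mathrm{Tr}(M)=\mathrm{tr}(M_{\mathbb{C}})$, valid for every dual quaternion matrix $M$.

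The only work is translating the Trace conditions into trace conditions on the adjoints. By properties (2) and (4) of the dual complex adjoint, which the paper notes persist in the dual setting, we have
\[
(A^2)_{\mathbb{C}}=A_{\mathbb{C}}^2,\qquad (A^{\ast}A)_{\mathbb{C}}=A_{\mathbb{C}}^{\ast}A_{\mathbb{C}}.
\]
Hence
\[
\mathrm{Tr}(A,A^{2},A^{\ast}A)=\mathrm{tr}(A_{\mathbb{C}},A_{\mathbb{C}}^{2},A_{\mathbb{C}}^{\ast}A_{\mathbb{C}}),
\]
and the same holds for $B$. It follows that the Trace condition in the corollary is literally condition (1) of the preceding proposition.

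It then remains to assemble the pieces. Since $A_{st}$ is not real, the preceding proposition converts the Trace identities into $A_{\mathbb{C}}\sim_u B_{\mathbb{C}}$, and Corollary \ref{cor:scalar-unified} (or Theorem \ref{thm:arbitrary-order} with $n=1$) converts that into $A\sim_u B$. The reverse direction can be done directly, without using nonreality: if $B=WAW^{\ast}$ with $W$ unitary, then $B^2=WA^2W^{\ast}$ and $B^{\ast}B=WA^{\ast}AW^{\ast}$, and invariance of $\mathrm{Tr}$ under similarity (property (4) of the Trace proposition) gives the equalities.

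The main point needing care is that the hypothesis "$A_{st}$ not real" is imposed only on $A$, while the proposition likewise assumes only that $A_{st}$ is nonreal; so I just need to confirm the proposition is applied with exactly this hypothesis. Its proof already handles $B$ through the standard-part trace identities, which force $(B_{st})_{\mathbb{C}}$ to have the same distinct eigenvalues. A second check is that $\mathrm{Tr}(M)=\mathrm{tr}(M_{\mathbb{C}})$ holds for the dual parts as well as the standard parts. This is immediate, since both sides are defined coefficientwise and the complex-adjoint identity holds for each quaternion coefficient.
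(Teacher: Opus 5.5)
Your proposal is correct and follows the route the paper intends. The paper states this corollary without a separate proof, as the combination of the preceding proposition, Corollary \ref{cor:scalar-unified} (the case $n=1$ of Theorem \ref{thm:arbitrary-order}), and the identity $\mathrm{Tr}(M)=\mathrm{tr}(M_{\mathbb{C}})$ together with the multiplicativity and $\ast$-compatibility of the dual complex adjoint. Your direct argument for the forward direction, using invariance of $\mathrm{Tr}$ under unitary similarity, is also fine and mirrors the $(2)\Rightarrow(1)$ step of that proposition.
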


\begin{example}
Let $A=1+(1+x\boldsymbol{j})\epsilon$ where $x$ is a nonzero complex number, and let $B=1+\epsilon$. For any binary word $W(a, b)$, where $a$ appears $m$ times and $b$ appears $n$ times, one can see that
\[
W(A,A^{\ast})=1+((m+n)+(m-n)x\boldsymbol{j})\epsilon\ \ \ \text{and} \ \  \
W(B,B^{\ast})=1+(m+n)\epsilon.
\]
Consequently,
$$
\mathrm{Tr}(W(A,A^{\ast}))=2+2(m+n)\epsilon=\mathrm{Tr}(W(B,B^{\ast})).
$$
Therefore, all the Trace words of $A$ and $B$ are equal.
However, $A$ is not unitarily similar to $B$. Since $A_{st}=B_{st}$ is real and $A_{\mathcal{I}}$ is not unitarily similar to $B_{\mathcal{I}}$, it follows from Proposition \ref{99} that $A$ and $B$ are not unitarily similar.
\end{example}

\begin{remark}
Theorem~\ref{thm:arbitrary-order} is valid for every matrix order, whereas
the finite Trace-word statement in Corollary~\ref{999} is a
scalar consequence of the $2\times2$ dual complex classification. A finite
Trace-word classification for arbitrary $n$ would require an additional
completeness theorem for $2n\times2n$ dual complex matrices; it does not
follow solely from the complex adjoint representation.
\end{remark}

\end{document}